\definecolor{verdemar}{rgb}{0.75,0.78,0.65}
\newtheorem{proposicao}{Proposition}[subsection]
\newtheorem{teorema}{Theorem}[subsection]
\newtheorem{lema}{Lemma}[subsection]
\newtheorem{exem}{Example}[subsection]
\newtheorem{obs}{Remark}[subsection]
\newtheorem{Def}{Definition}[subsection]
\newenvironment{proof}{{\noindent\bf Proof.} }
                       {\hfill\rule{2.1mm}{2.1mm} \bigskip }
\newcommand{\R}{\mathbb{R}}
\newcommand{\N}{\mathbb{N}}
\begin{document}
\title{ A Linear Scalarization Proximal Point  Method for Quasiconvex  Multiobjective Minimization }
\author{E. A. Papa Quiroz\thanks{ Mayor de San Marcos National University, Department of Mathematical Sciences,  Lima, Per\'{u} and Federal University of Rio de Janeiro, Computing and Systems Engineering Department, post office box  68511,CEP 21945-970, Rio de Janeiro, Brazil(erikpapa@gmail.com).}
\and{H. C. F. Apolinario\thanks{Federal  University of Tocantins, Undergraduate Computation Sciences Course, ALC NO 14 (109 Norte) AV.NS.15 S/N , CEP 77001-090, Tel: +55 63 8481-5168; +55 63 3232-8027; FAX +55 63 3232-8020, Palmas, Brazil (hellena@uft.edu.br).}}\\
\and{K.D.V. Villacorta\thanks{Federal University of Paraíba, Campus V-Mangabeira, João Pessoa-Paraíba, Brazil. CEP: 58.055-000 }}
\and{P. R. Oliveira\thanks{Federal University of Rio de Janeiro, Computing and Systems Engineering Department, post office box  68511,CEP 21945-970, Rio de Janeiro, Brazil(poliveir@cos.ufrj.br).}}}
\date{\today}
\maketitle
\ \\[-0.5cm]

\begin{center}
{\bf Abstract}
\end{center}
In this paper we propose a linear scalarization proximal point algorithm for solving arbitrary lower semicontinuous quasiconvex multiobjective minimization problems. Under some natural assumptions and using the condition that the proximal parameters are bounded we prove the convergence of the sequence generated by the algorithm and when the objective functions are continuous, we prove the convergence to a generalized critical point. Furthermore, if each iteration minimize the proximal regularized function and the proximal parameters converges to zero we prove the convergence to a weak Pareto solution. In the continuously differentiable case, it is proved the global convergence of the sequence to a Pareto critical point and we introduce an inexact algorithm with the same convergence properties. We also analyze particular cases of the algorithm obtained finite convergence to a Pareto optimal point when the objective functions are convex and a sharp minimum condition is satisfied. 
\\\\
\noindent{\bf Keywords:} Multiobjective minimization, lower semicontinuous quasiconvex functions, proximal point methods, Fejér convergence, Pareto-Clarke critical point, finite convergence.
 
\section{Introduction}
\noindent

In this work we consider the multiobjective minimization problem:
\begin{eqnarray}
\textrm{min}\lbrace F(x): x \in \mathbb{R}^n\rbrace
\label{prob}
\end{eqnarray}
where $F=(F_1,F_2,...,F_m): \mathbb{R}^n\longrightarrow \mathbb{R}^m\cup \{+ \infty \}^m$ is a lower semicontinuous and
quasiconvex vector function on the Euclidean space $ \mathbb{R}^n.$  The above notation means that each $F_i$ is an extended function, that is, $F_i:\mathbb{R}^n\longrightarrow \mathbb{R}^m\cup \{+ \infty \}$ .

The main motivation to study this problem are the consumer demand theory in economy, where the quasiconvexity of the objective vector function is a natural condition associated to diversification of the consumption, see Mas Colell et al. \cite{Colell}, and the quasiconvex optimization models in location theory, see \cite{Gromicho}.

Recently Apolinario et al. \cite{apo} has been introduced an exact linear scalarization proximal point algorithm to solve the above class of problems when the vector function $F$ is locally Lipschitz and $\textnormal{dom}(F)=\R^n$. The proposed iteration was the following:
given $p^{k} \in \mathbb{R}^n$, find $p^{k+1}\in \Omega_k=\left\{ x\in \mathbb{R}^n: F(x) \preceq F(p^k)\right\}$ such that:
 $$
 0 \in \partial^o\left( \left\langle F(.), z_k\right\rangle  + \dfrac{\alpha_k}{2} \Vert\ .\  - p^k \Vert ^2 \right) (p^{k+1}) + \mathcal{N}_{\Omega_k}(p^{k+1})
$$
where ${\partial}^o$ is the Clarke subdifferential, see Subsection 2.5 of \cite{apo}, $\alpha_k > 0 $, $\left\{z_k\right\} \subset \mathbb{R}^m_+\backslash \left\{0\right\}$, $\left\|z_k\right\| = 1$ and $\mathcal{N}_{\Omega_k}(p^{k+1})$ the normal cone to $\Omega_k$ at $x^{k+1},$ see Definition \ref{normal} in Section \ref{Prelimin} of this paper. The authors proved, under some natural assumptions, that the sequence  generated by the above algorithm is well defined and converges globally to a Pareto-Clarke critical point.

Unfortunately, the algorithm proposed in that paper can not be applied to a general class of proper lower semicontinuous quasiconvex functions, and thus can not be applied to solve constrained multiobjective problems nor continuous quasiconvex functions which are not locally Lipschitz. Moreover, for a future implementation and application for example to costly improving behaviors of strongly averse agents in economy (see Sections 5 and 6 of Bento et al. \cite{Bento}), that paper did not provide an inexact version of the proposed algorithm .

Thus we have two motivations in the present paper: the first motivation is to extend the convergence properties of the linear scalarization proximal point method introduced in \cite{apo} to solve more general, probabily constrained, quasiconvex multiobjective problems of the form (\ref{prob}) and the second ones is to introduce an inexact algorithm when $F$ is continuously differentiable on $\mathbb{R}^m$.

The main iteration of the proposed algorithm is: Given $x^k,$ find $x^{k+1} $ such that
 \begin{eqnarray}
 0 \in \hat{\partial}\left( \left\langle F(.), z_k\right\rangle  + \dfrac{\alpha_k}{2} \Vert\ .\  - x^k \Vert ^2  + \delta_{\Omega_k}(.) \right) (x^{k+1}) 
 \label{subdiferencial3i}
 \end{eqnarray}
where $\hat{\partial}$ is the Fréchet subdifferential, see Subsection \ref{frechet},  \ $\Omega_k= \left\{ x\in \mathbb{R}^n: F(x) \preceq F(x^k)\right\}$, $\alpha_k > 0 $, $\left\{z_k\right\} \subset \mathbb{R}^m_+\backslash \left\{0\right\}$ and $\left\|z_k\right\| = 1$.

Some works related to this paper are the following:
\begin{itemize}
	\item Bento et al. \cite{Bento} introduced the nonlinear scalarized proximal iteration:
$$
y^{k+1}\in \arg \min \left\{f\left( F(x)+ \delta_{\Omega_k}(x)e+ \dfrac{\alpha_k}{2} \Vert\ .\  - y^k \Vert ^2e \right): x\in \R^n  \right\}
$$
where $f:\R^n\longrightarrow \R$ is a function defined by $f(y):=\max_{i\in I}\{ \langle y,e_i \rangle \}$
with $e_i$ is the canonical base of the space $\R^n,$ $\Omega_k= \left\{ x\in \mathbb{R}^n: F(x) \preceq F(y^k)\right\}$ and $e=(1,1,...,1)\in \R^n.$ Assuming that $F:\mathbb{R}^n\longrightarrow \mathbb{R}^m$ is quasiconvex and continuously differentiable and under some natural assumptions the authors proved that the sequence $\{y^k\}$ converges to a Pareto Critical point of $F.$ Furthermore, assuming that $F$ is convex, the weak Pareto optimal set is weak sharp for the multiobjective problem and that the sequence is generated by the following unconstrained iteration
$$
y^{k+1}:=\arg \min \left\{f(F(x)) + \dfrac{\alpha_k}{2} \Vert\ x\  - y^k \Vert ^2:x\in \mathbb{R}^n  \right\}
$$  
then the above iteration obtain a Pareto optimal point after a finite number de iterations.

The difference between our work and the paper of Bento et al., \cite{Bento}, is that in the present paper we consider a linear scalararization of $F$ instead of a nonlinear ones proposed in \cite{Bento}, another difference is that our assumptions are more weak, in particular, we obtain convergence results for nondifferentiable quasiconvex functions.

\item Makela et al., \cite{makela}, developed a multiobjective proximal bundle method for nonsmooth optimization where the objective functions are locally Lipschitz (not necessarily smooth nor convex). The proximal method is not directly based on employing any scalarizing function but based on a improvement function $H:\mathbb{R}^n\times \mathbb{R}^n \longrightarrow \mathbb{R}$ defined by $H(x,y)=\max \{ F_i(x)-F_i(y),g_j(x):i=1,...,m, j=1,...,r \}$ with $\textnormal{dom} F=\{x\in \mathbb{R}^n: g_j(x)\geq 0, j=1,...,r\}.$ If $F_i$ and $g_j$ are pseudoconvex and weakly semismooth functions and certain constraint qualification is valid, the authors proved that any accumulation point of the sequence is a weak Pareto solution and without the assumption of pseudoconvex, they obtained that any accumulation point is a  substationary point, that is, $0\in \partial H(\bar x, \bar x),$ where $\bar x$ is an accumulation point.
\item Chuong et al., \cite{chuong}, developed three algorithms of the so-called hybrid approximate proximal type to find Pareto optimal points for general class of convex constrained problems of vector optimization in finite and infinite dimensional spaces, that is, $\min_C \{F(x): x\in \Omega\},$ where $C$ is a closed convex and pointed cone and the minimization is understood with respect to the ordering relation given by $y\preceq_{C}x$ if and only if $x-y\in C$. Assuming that the set $\left(F(x^0) - C\right) \cap F(\Omega)$ is $C$ - quasi-complete for $\Omega,$ that is, for any sequence $\{u_l\}\subset \Omega$ with $u_0=x_0$ such that $F(u_{l+1})\preceq_{C}F(u_l)$ there exists $u\in VI(\Omega, A)$ satisfying $F(u)\preceq_{C}F(u_l),$ for every $l\in \N;$ and the assumption that $F$ is $C^{+}-$ uniformly semicontinuous on $\Omega,$ the authors proved the convergence of the sequence generates by its algorithm.
\end{itemize}

Under the assumption that $F$ is a proper lower semicontinuous quasiconvex vector function and the assumption that the set $\left(F(x^0) - \mathbb{R}^m_+\right)\cap F(\mathbb{R}^n)$ is $\mathbb{R}^m_+$ - complete we prove the global convergence of the sequence $\{x^k\},$ generated by (\ref{subdiferencial3i}), to the set
$$E = \left\{x \in \mathbb{R}^n: F\left(x\right)\preceq F\left(x^k\right),\ \ \forall\  k \in \mathbb{N}\right\}.$$
Additionally, if $F:\mathbb{R}^n\longrightarrow \mathbb{R}^m $ is continuous, and $0<\alpha_k<\bar{\alpha},$ for some $\bar{\alpha}>0,$ we prove that
$\lim  \limits_{k\rightarrow +\infty}g^{k} = 0,$ where $g^k \in \hat{\partial}\left( \left\langle F(.), z_k\right\rangle + \delta_{\Omega_k}\right)(x^{k+1}).$ In the particular case when $ \lim \limits_{k\rightarrow +\infty}\alpha_k= 0$ and the iterations are given by 
\begin{eqnarray}
  x^{k+1}\in \textnormal{arg min} \left\{\left\langle F(x), z_k\right\rangle+\frac{\alpha_k}{2}\left\|x - x^k\right\|^2 : x\in\Omega_k\right\},
  \label{recursao0i}
 \end{eqnarray}
then the sequence $\lbrace x^k\rbrace$ converges to a weak pareto solution of the problem $(\ref{prob})$.

When the vector function $F: \mathbb{R}^n\longrightarrow \mathbb{R}^m$ is continuously differentiable and $0<\alpha_k<\bar{\alpha},$ for some $\bar{\alpha}>0,$ we prove that the sequence $\{x^k\},$ generated by (\ref{subdiferencial3i}), converges to a Pareto critical point of the problem (\ref{prob}). Then, we introduce an inexact proximal algorithm given by
\begin{equation}
 0 \in \hat{\partial}_{\epsilon_k} \left( \langle F(x), z_k \rangle \right) (x) + \alpha_k\left(x - x^k\right) + \mathcal{N}_{\Omega_k}(x),
 \label{diferenciali}
 \end{equation}
  \begin{equation}
 \label{deltai}
\displaystyle \sum_{k=1}^{\infty} \delta_k < \infty,
\end{equation}
where $\delta_k = \textnormal{max}\left\lbrace  \dfrac{\varepsilon_k}{\alpha_k}, \dfrac{\Vert \nu_k\Vert}{\alpha_k}\right\rbrace,$ $\varepsilon_k\geq 0,$  and $\hat{\partial}_{\varepsilon_k}$ is the Fréchet $\varepsilon_k$-subdifferential.
We prove the convergence of $\{x^k\},$ generated by (\ref{diferenciali}) and (\ref{deltai}) to a Pareto critical point of the problem (\ref{prob}).

We also analyze some conditions to obtain finite convergence of a particular case of the proposed algorithm. 

The paper is organized as follows:  In  Section 2 we recall some concepts and basic results on multiobjective optimization, descent direction, scalar representation, quasiconvex and convex functions, Fr\'echet and Limiting subdiferential,  $\epsilon-$Subdifferential and Fej\'{e}r convergence. In Section 3 we present the problem and we give an example of a quasiconvex model in demand theory. In Section 4 we introduce an exact algorithm and analyze its convergence. In Section 5 we present an inexact algorithm for the differentiable case and analyze its convergence. In Section 6, we introduce an inexact algorithm for nonsmooth proper lower semicontinuous convex multiobjective minimization and using some concepts of weak sharp minimum we prove the convergence of the iterations in a finite number of steps to a Pareto optimal point. In Section 7 give a numerical example of the algorithm and in Section 8 we give our conclusions. 

\section{Preliminaries}
\label{Prelimin}
In this section, we present some basic concepts and results that are of fundamental importance for the development of our work. These facts can be found, for example, in Hadjisavvas \cite{Had}, Mordukhovich \cite{Mordukhovich} and, Rockafellar and Wets \cite{Rockafellar}.

\subsection{Definitions, notations and some basic results}

Along this paper  $ \mathbb{R}^n$ denotes an Euclidean space, that is, a real vectorial space with the canonical inner product $\langle x,y\rangle=\sum\limits_{i=1}^{n} x_iy_i$ and the norm given by $||x||=\sqrt{\langle x, x\rangle }$.\\
Given a function {\small $f :\mathbb{R}^n\longrightarrow \mathbb{R}\cup\left\{+\infty\right\}$}, we
denote by $\textnormal{dom}(f)= \left\{x \in \mathbb{R}^n: f(x) < + \infty \right\},$ the {\it effective domain } of $f$.
If $\textnormal{dom}(f) \neq \emptyset$, $f $ is called proper.
If {\footnotesize $\lim  \limits_{\left\|x\right\|\rightarrow +\infty}f(x) = +\infty$}, $f$ is called coercive.  We denote by arg min $\left\{f(x): x \in \mathbb{R}^n\right\}$ the set of minimizer of the function $f$ and by  $f * $, the optimal value of problem: $\min \left\{f(x): x \in \mathbb{R}^n\right\},$  if it exists.
The \  function \ $f$ is {\it lower semicontinuous} at $\bar{x}$ if for all sequence $\left\{x_k\right\}_{k \in \mathbb{N}} $ such that $\lim  \limits_{k \rightarrow +\infty}x_k = \bar{x}$ we obtain that $f(\bar{x}) \leq \liminf \limits_{k \rightarrow +\infty}f(x_k)$.\\
The  next result ensures that the set of minimizers of a function, under some assumptions, is nonempty.
\begin{proposicao}{\bf (Rockafellar and Wets \cite{Rockafellar}, Theorem 1.9)}\\
Suppose that {\small $f:\mathbb{R}^n\longrightarrow \mathbb{R}\cup\left\{+\infty\right\}$} is {\it proper, lower semicontinuous} and coercive, then the optimal value  $ f^*$\ is finite and the set $\textnormal{arg min}$ $\left\{f(x): x \in \mathbb{R}^n\right\}$ is nonempty and compact.
\label{coercivaesemicont}
\end{proposicao}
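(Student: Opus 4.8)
The plan is to apply the direct method of the calculus of variations: build a minimizing sequence, trap it inside a compact sublevel set using coercivity, pass to a convergent subsequence, and conclude with lower semicontinuity.

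First I would pick $x_0 \in \textnormal{dom}(f)$, which exists because $f$ is proper, and consider the sublevel set $L = \{x \in \mathbb{R}^n : f(x) \le f(x_0)\}$. It is nonempty since $x_0 \in L$, and $L \subset \textnormal{dom}(f)$ because every $x \in L$ satisfies $f(x) \le f(x_0) < +\infty$. I claim $L$ is bounded: otherwise there is a sequence $\{y_j\} \subset L$ with $\|y_j\| \to +\infty$, and then coercivity forces $f(y_j) \to +\infty$, contradicting $f(y_j) \le f(x_0) < +\infty$. I also claim $L$ is closed, since sublevel sets of lower semicontinuous functions are closed: if $y_j \to y$ with $f(y_j) \le f(x_0)$, then $f(y) \le \liminf_j f(y_j) \le f(x_0)$. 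Hence $L$ is compact.

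Next, observe that any point $x \notin L$ satisfies $f(x) > f(x_0) \ge \inf_{x \in L} f(x)$, so $f^* := \inf_{x \in \mathbb{R}^n} f(x) = \inf_{x \in L} f(x)$. I can therefore choose a minimizing sequence $\{x_k\} \subset L$ with $f(x_k) \to f^*$. By compactness of $L$ there is a subsequence $x_{k_j} \to \bar{x} \in L$, and lower semicontinuity gives $f(\bar{x}) \le \liminf_j f(x_{k_j}) = f^*$; combined with $f(\bar{x}) \ge f^*$ this yields $f(\bar{x}) = f^*$. Since $\bar{x} \in L \subset \textnormal{dom}(f)$, the value $f^* = f(\bar{x})$ is a real number, so $f^*$ is finite; moreover $\bar{x}$ is a global minimizer, so $\textnormal{arg min}\{f(x): x \in \mathbb{R}^n\}$ is nonempty.

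Finally, $\textnormal{arg min}\{f(x): x \in \mathbb{R}^n\} = \{x \in \mathbb{R}^n : f(x) \le f^*\}$ is again a sublevel set of $f$, hence closed by lower semicontinuity, and it is contained in $L$, hence bounded; therefore it is compact. I do not expect a genuine obstacle in this argument, as it is a classical Weierstrass-type existence proof; the only point that needs slight care is verifying that $f^*$ is not $-\infty$ before a minimizer has been produced, which is handled automatically once a minimizer lying in $\textnormal{dom}(f)$ is exhibited.
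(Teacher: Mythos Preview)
Your proof is correct and is the standard Weierstrass-type argument. Note, however, that the paper does not actually prove this proposition at all: it is stated as a known result and simply attributed to Rockafellar and Wets, Theorem~1.9, with no proof given in the paper. So there is no ``paper's own proof'' to compare against; your argument supplies exactly the classical proof one would find in the cited reference.
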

\begin{Def}
Let $D \subset \mathbb{R}^n$ and $\bar{x} \in D$.   The normal cone to $D$ at $\bar{x} \in D$ is given by $\mathcal{N}_{D}(\bar{x}) = \left\{v \in \mathbb{R}^n: \langle  v,  x - \bar{x}\rangle \leq 0, \forall \ x \in D\right\}$.
\label{normal}
\end{Def}
It follows an important result that involves sequences of non-negative numbers which will be useful in Section  5.
\begin{lema}
Let $\{w_k\}$, $\{p_k\}$ and $\{q_k\}$ sequences  of non-negative  real numbers. If
\begin{equation*}
w_{k+1} \leq \left( 1 + p_k\right)w_k + q_k, \ \ \ \ \displaystyle \sum_{i=1}^{\infty} p_k < +\infty \ \ \textnormal{and} \ \  \displaystyle \sum_{i=1}^{\infty} q_k < +\infty,
\end{equation*}
then the sequence $\{w_k\}$ is convergent.
\label{p}
\end{lema}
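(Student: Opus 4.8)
The plan is to renormalize $\{w_k\}$ by the partial products of the factors $1+p_k$, so as to reduce the recursion to the purely additive form $v_{k+1}\le v_k+q_k$, and then to extract a monotone convergent sequence from that.

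First I would set $A_1=1$ and $A_{k+1}=(1+p_k)A_k$, so that $A_k=\prod_{j=1}^{k-1}(1+p_j)$. Since $1+p_j\le e^{p_j}$ for $p_j\ge 0$ and $\sum_{j}p_j<+\infty$, the partial products are bounded above by $e^{\sum_j p_j}$; being nondecreasing and bounded, $\{A_k\}$ converges to some $A_\infty\in[1,+\infty)$, and in particular $A_k\ge 1$ for every $k$. Dividing the inequality $w_{k+1}\le(1+p_k)w_k+q_k$ by $A_{k+1}=(1+p_k)A_k$ and using $A_{k+1}\ge 1$, I get, for $v_k:=w_k/A_k\ge 0$, the estimate $v_{k+1}\le v_k+q_k$.

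Next I would introduce $u_k:=v_k-\sum_{j=1}^{k-1}q_j$ (the empty sum being $0$ when $k=1$). A direct computation using $v_{k+1}\le v_k+q_k$ shows $u_{k+1}\le u_k$, so $\{u_k\}$ is nonincreasing; moreover $u_k\ge -\sum_{j=1}^{\infty}q_j>-\infty$, hence $\{u_k\}$ is bounded below and therefore converges to some $u_\infty$. Consequently $v_k=u_k+\sum_{j=1}^{k-1}q_j\to u_\infty+\sum_{j=1}^{\infty}q_j=:v_\infty$, and finally $w_k=A_kv_k\to A_\infty v_\infty$ as a product of two convergent sequences, which is the claim. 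The only point needing a little care — and it is entirely elementary — is the convergence of the infinite product $\prod_k(1+p_k)$, which is what lets the renormalization work; beyond that the argument is just bookkeeping with nonnegative terms, and I foresee no genuine obstacle.
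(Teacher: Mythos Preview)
Your argument is correct: the renormalization $v_k=w_k/\prod_{j<k}(1+p_j)$ reduces the recursion to $v_{k+1}\le v_k+q_k$, the shifted sequence $u_k=v_k-\sum_{j<k}q_j$ is nonincreasing and bounded below, and undoing the renormalization gives convergence of $\{w_k\}$. The paper itself does not supply a proof of this lemma; it simply cites Polyak \cite{Polyak}, Lemma~2.2.2, as an external reference. Your proof is the standard one (and essentially what is found in Polyak), so there is nothing to compare beyond noting that you have filled in what the paper leaves to the literature.
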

\begin{proof}
See Polyak \cite{Polyak}, Lema 2.2.2.
\end{proof}
 
\subsection{Multiobjective optimization}

In this subsection we present some properties and notation  on multiobjective optimization. Those basic facts can be seen, for example, in  Miettinen \cite{Kaisa} and Luc \cite{Luc}.\\
Throughout this paper we consider the cone $\mathbb{R}^m_+ = \{ y\in \mathbb{R}^m : y_i\geq0, \forall \  i = 1, ... , m \}$, which induce a partial order $\preceq$ in $\mathbb{R}^m$ given by, for $y,y'\in \mathbb{R}^m$,
$y\ \preceq\ y'$ if, and only if, $ y'\ - \ y$  $ \in \mathbb{R}^m_+$, this means that $ y_i \leq \ y'_i,$ for all $ i= 1,2,...,m $ .   Given $ \mathbb{R}^m_{++}= \{ y\in \mathbb{R}^m : y_i>0, \forall \  i = 1, ... , m \}$ the above relation induce the following one $\prec$, induced  by the interior of this cone,  given by, $y\ \prec\ y'$, if, and only if, $ y'\ - \ y$  $ \in \mathbb{R}^m_{++}$, this means that $ y_i < \ y'_i$ for all $ i= 1,2,...,m$.  Those partial orders establish a class of problems known in the literature as Multiobjective  Optimization.\\ \\
Let us consider the unconstrained multiobjective optimization problem (MOP) :
\begin{eqnarray}
 \textrm{min} \left\{G(x): x \in \mathbb{R}^n \right\}
  \label{POM}
\end{eqnarray}
where $G:\mathbb{R}^n\longrightarrow \mathbb{R}^m\cup \{+ \infty \}^m$, with $G = \left(G_1, G_2, ... , G_m\right)$ and $G_i:\mathbb{R}^n \longrightarrow \R, \forall i=1,...,m$.
\begin{Def} {\bf (Miettinen \cite{Kaisa}, Definition 2.2.1)}
 A point $x^* \in \mathbb{R}^n$ is a Pareto optimal point or Pareto solution of the problem $\left(\ref{POM}\right)$, if there does not exist $x \in  \mathbb{R}^n $ such that $ G_{i}(x) \leq G_{i}(x^*)$, for all $i \in \left\{1,...,m\right\}$ and $ G_{j}(x) <  G_{j}(x^*)$, for at least one index $ j \in \left\{1,...,m\right\}$ .
\end{Def}
\begin{Def}{\bf (Miettinen \cite{Kaisa},Definition 2.5.1)}
 A point $x^* \in \mathbb{R}^n$ is a weak Pareto solution of the problem $\left(\ref{POM}\right)$, if there does not exist $x \in  \mathbb{R}^n $ such that $ G_{i}(x) < G_{i}(x^*)$, for all $i \in \left\{1,...,m\right\}$.
\end{Def}
We denote by arg min$\left\{G(x):x\in \mathbb{R}^n \right\}$ and by arg min$_w$ $\left\{G(x):x\in \mathbb{R}^n \right\}$ the set of Pareto solutions and weak Pareto solutions to the problem $\left(\ref{POM}\right)$, respectively.  It is easy to check that\\ arg min$\left\{G(x):x\in \mathbb{R}^n \right\} \subset$ arg min$_w$ $\left\{G(x):x\in \mathbb{R}^n \right\}$.
%

\subsection{Pareto critical point and descent direction}

Let $G:\mathbb{R}^n\longrightarrow \mathbb{R}^m$ be a differentiable function and $x \in \mathbb{R}^n$, the jacobian of $G$ at $x$, denoted by $JG(x)$, is a matrix of order $m \times n$ whose entries  are defined by $\left(JG(x)\right)_{i,j} = \frac{\partial G_i}{\partial x_j}(x)$.  We may represent it by,
\begin{center}
$JG\left(x\right) := \left[\nabla G_1(x) \nabla G_2(x)... \nabla G_m(x)  \right]^T$, $x \in \mathbb{R}^n$.
\end{center}
The image of the jacobian of $G$ at $x$ we denote by 
\begin{center}
$\footnotesize{Im \left(JG\left(x\right)\right) := \lbrace JG\left(x\right)v = \left(\langle \nabla G_1(x) , v\rangle, \langle \nabla G_2(x) , v\rangle, ..., \langle \nabla G_m(x) , v\rangle\right): v \in \mathbb{R}^n \rbrace }$.
\end{center}
A necessary but not sufficient first order optimality condition for the problem 
$(\ref {POM})$ at $x \in \mathbb {R}^n $, is
\begin{eqnarray}
Im \left(JG\left(x\right)\right)\cap\left(-\mathbb{R}^m_{++}\right)=\emptyset.
\label{cond}
\end{eqnarray}
Equivalently, $\forall \ v \in \mathbb{R}^n$, there exists $i_0 = i_0(v) \in \lbrace 1,...,m\rbrace$ such that
\begin{center}
$\langle \nabla G_{i_0}(x) ,  v \rangle \geq 0$.
\end{center}
\begin{Def}
Let $G:\mathbb{R}^n\longrightarrow \mathbb{R}^m$ be a differentiable function. A point $x^* \in \mathbb{R}^n$ satisfying $(\ref {cond})$ is called a Pareto critical point.
\end{Def}
Follows from the previous definition, if a point $x$ is not Pareto critical point, then there exists a direction 
 $v \in \mathbb {R}^ n$ satisfying
\begin{center}
$JG\left(x\right)v \in \left(-\mathbb{R}^m_{++}\right)$,
\end{center}
i.e, $\langle \nabla G_i( x ) , v \rangle < 0, \ \forall \ i \in \lbrace 1,..., m \rbrace$.  As $G$ is continuously differentiable, then
\begin{center}
$\displaystyle \lim_{t \rightarrow 0}\dfrac{G_i(x + tv) - G_i(x)}{t}= \langle \nabla G_i(x) ,v\rangle < 0, \ \forall \ i \in \lbrace 1,..., m \rbrace $.
\end{center}
This implies that $v$ is a {\it descent direction} for the function $G_i$, i.e, there exists $\varepsilon > 0 $, such that
\begin{center}
 $G_i(x + tv)  < G_i(x), \forall \ t \in (0 , \varepsilon ], \forall \ i \in \lbrace 1,..., m \rbrace $. 
 \end{center}
Therefore, $v$ is a {\it descent direction} for $G$ at $x$, i.e, there exists $ \varepsilon > 0 $ such that
 \begin{center}
 $ G(x + tv) \prec G(x), \ \forall \ t \in (0 , \varepsilon]$.
 \end{center}

\subsection{Scalar representation}
In this subsection we present a useful technique in multiobjective optimization which allows to replace the original optimization problem into a scalar optimization problem or a family of scalar problems.
 \begin{Def}{\bf (Luc \cite{Luc}, Definição 2.1)}
A function
$f:\mathbb{R}^n\longrightarrow \mathbb{R}\cup \{+ \infty \}$ is said to be a  strict scalar representation of a map $F:\mathbb{R}^n\longrightarrow \mathbb{R}^m\cup \{+ \infty \}^m$ when given $x,\bar{x}\in \mathbb{R}^n :$
 \begin{center}
$F(x)\preceq F(\bar{x}) \Longrightarrow f(x)\leq f(\bar{x})$\ \ and \ \
$F(x)\prec F(\bar{x}) \Longrightarrow f(x)<f(\bar{x}).$
\end{center}
Furthermore, we say that $f$ is a weak scalar representation of $F$ if
\begin{center}
$F(x)\prec F(\bar{x})\Longrightarrow f(x)<f(\bar{x}).$
\end{center}
\label{escalarizacao} 
\end{Def}
\begin{proposicao}
\label{rep}
Let $f:\mathbb{R}^n\longrightarrow \mathbb{R}\cup \{+ \infty \}$ be a proper function.  Then $f$ is a strict scalar representation of $F$ if, and only if, there exists a strictly increasing function  $g:F\left(\mathbb{R}^n\right)\longrightarrow \mathbb{R}$ such that $f = g \circ F.$
\end{proposicao}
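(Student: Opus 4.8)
The statement is an equivalence, so I will argue the two implications separately. The reverse implication is essentially immediate from the definitions, so the substance is in showing that a strict scalar representation must factor through $F$ via a strictly increasing function. The key observation driving this direction is that a strict scalar representation is necessarily \emph{constant on the fibres} of $F$: if $F(x)=F(\bar{x})$, then in particular $F(x)\preceq F(\bar{x})$ and $F(\bar{x})\preceq F(x)$, and the defining implication of a strict scalar representation applied twice yields $f(x)\leq f(\bar{x})$ and $f(\bar{x})\leq f(x)$, hence $f(x)=f(\bar{x})$. So we only use the antisymmetry of $\preceq$ here.

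Granted this, here is how I would carry out the ``only if'' direction. Define $g:F(\mathbb{R}^n)\longrightarrow \mathbb{R}\cup\{+\infty\}$ by setting $g(y):=f(x)$ for any $x\in\mathbb{R}^n$ with $F(x)=y$; the previous paragraph shows this value does not depend on the chosen preimage $x$, so $g$ is well defined, and by construction $f(x)=g(F(x))$ for every $x$, i.e. $f=g\circ F$. Since $f$ is proper and $f=g\circ F$, the function $g$ is finite on $F(\textnormal{dom}\,f)$; if one insists on a real-valued $g$ one restricts to that set, otherwise the value $+\infty$ is harmless. It then remains to check that $g$ is strictly increasing with respect to $\preceq$ and $\prec$. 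Let $y,y'\in F(\mathbb{R}^n)$ and choose $x,x'$ with $F(x)=y$ and $F(x')=y'$. If $y\preceq y'$ then $F(x)\preceq F(x')$, so $f(x)\leq f(x')$, that is $g(y)\leq g(y')$; if $y\prec y'$ then $F(x)\prec F(x')$, so $f(x)<f(x')$, that is $g(y)<g(y')$. This is exactly the strict monotonicity required, completing the direction.

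For the converse, suppose $f=g\circ F$ with $g:F(\mathbb{R}^n)\longrightarrow\mathbb{R}$ strictly increasing. Given $x,\bar{x}\in\mathbb{R}^n$ with $F(x)\preceq F(\bar{x})$, both $F(x)$ and $F(\bar{x})$ lie in $F(\mathbb{R}^n)$, so monotonicity of $g$ gives $f(x)=g(F(x))\leq g(F(\bar{x}))=f(\bar{x})$; likewise $F(x)\prec F(\bar{x})$ gives $f(x)<f(\bar{x})$ by strict monotonicity of $g$. Hence $f$ is a strict scalar representation of $F$. The only point requiring genuine care in the whole argument is the well-definedness of $g$, which is why I would isolate the ``constant on fibres'' observation first; the remaining issues are purely bookkeeping about the value $+\infty$ and the precise reading of ``strictly increasing'', which I would fix to mean monotone for $\preceq$ together with strictly monotone for $\prec$, so that it dovetails with Definition \ref{escalarizacao}.
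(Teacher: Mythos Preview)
Your argument is correct; the key step---that a strict scalar representation is constant on the fibres of $F$, which is what makes $g$ well defined---is exactly the right observation, and the rest follows cleanly. The paper itself does not prove this proposition but simply refers to Luc \cite{Luc}, Proposition~2.3; your direct argument is the standard one and presumably matches what is found there.
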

\begin{proof}
See Luc \cite{Luc}, Proposition 2.3.
\end{proof}

\begin{proposicao}
\label{inclusao}
Let $f:\mathbb{R}^n\longrightarrow \mathbb{R}\cup \{+ \infty \}$ be a weak scalar representation of a vector function $F:\mathbb{R}^n\longrightarrow \mathbb{R}^m\cup \{+ \infty \}^m$ and $\textnormal{argmin}\left\{f(x):x\in \mathbb{R}^n\right\}$ the set of minimizer points of $f$. Then, we have
\begin{equation*}
\textnormal{argmin}\left\{f(x): x\in \mathbb{R}^n\right\}\subseteq \textnormal{argmin}_w \{F(x):x\in \mathbb{R}^n\}.
\end{equation*} 
\end{proposicao}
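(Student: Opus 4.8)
The plan is to give a short argument by contradiction that relies only on the implication in the definition of a weak scalar representation governed by the strict order $\prec$. First I would fix an arbitrary point $x^*\in\textnormal{argmin}\{f(x):x\in\mathbb{R}^n\}$; if this set is empty the inclusion holds trivially, so I may assume it is nonempty.

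Next I would assume, towards a contradiction, that $x^*$ is \emph{not} a weak Pareto solution of $F$. By the definition of weak Pareto solution recalled above (Miettinen, Definition 2.5.1), this means there exists $x\in\mathbb{R}^n$ with $F_i(x)<F_i(x^*)$ for every $i\in\{1,\dots,m\}$, i.e. $F(x)\prec F(x^*)$. At this point I would invoke the hypothesis that $f$ is a weak scalar representation of $F$, which forces $f(x)<f(x^*)$. This contradicts the fact that $x^*$ minimizes $f$ over $\mathbb{R}^n$, and therefore no such $x$ can exist; hence $x^*\in\textnormal{argmin}_w\{F(x):x\in\mathbb{R}^n\}$. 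Since $x^*$ was arbitrary, the desired inclusion $\textnormal{argmin}\{f(x):x\in\mathbb{R}^n\}\subseteq\textnormal{argmin}_w\{F(x):x\in\mathbb{R}^n\}$ follows.

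I do not expect any real obstacle here: the statement is an immediate consequence of the definitions, and the whole proof is essentially one line once the contradiction is set up. The only point worth emphasizing is that being a \emph{weak} scalar representation is precisely the amount of structure used — the argument never appeals to the implication governed by the non-strict order $\preceq$, so the stronger notion of strict scalar representation (Proposition \ref{rep}) is not needed for this inclusion. It is also worth noting that properness of $f$ plays no role in this particular statement; it is assumed only for consistency with the surrounding development.
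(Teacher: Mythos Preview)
Your proof is correct and matches the paper's approach exactly: the paper's own proof is simply ``It is immediate,'' and you have spelled out precisely the one-line contradiction argument that this phrase is meant to convey. Your observation that only the weak scalar representation property is used is accurate and worth noting.
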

\begin{proof}
It is immediate.
\end{proof}

\subsection{Quasiconvex and Convex Functions}
In this subsection we present the concept and characterization of quasiconvex functions and quasiconvex  multiobjective function. This theory can be found in Bazaraa et al. \cite{Bazaraa}, Luc \cite{Luc}, Mangasarian \cite{Mangasarian}, and references therein.
\begin{Def}
Let $f:\mathbb{R}^n\longrightarrow \mathbb{R} \cup \{+ \infty \}$ be a proper function.  Then, f is
called quasiconvex if for all $x,y\in \mathbb{R}^n$, and for all $ t \in \left[0,1\right]$, it holds
that $f(tx + (1-t) y)\leq \textnormal{max}\left\{f(x),f(y)\right\}$.
\end{Def}
\begin{Def}
Let $f:\mathbb{R}^n\longrightarrow \mathbb{R} \cup \{+ \infty \}$ be a proper function.  Then, f is
called convex if for all $x,y\in \mathbb{R}^n$, and for all $ t \in \left[0,1\right]$, it holds
that $f(tx + (1-t) y)\leq tf(x) + (1 - t)f(y)$.
\end{Def}
Observe that if $f$ is a quasiconvex function then $\textnormal{dom}(f)$ is a convex set. On the other hand, while a convex function can be characterized by the convexity of its epigraph, a quasiconvex function can
be characterized by the convexity of the lower level sets:
\begin{Def}
Let \ \ $F= (F_1,...,F_m):\mathbb{R}^n\longrightarrow \mathbb{R}^m\cup \{+ \infty \}^m$ be a function, then $F$ is $\mathbb{R}^m_+$ -  quasiconvex if every component function of $F$, $F_i: \mathbb{R}^n\longrightarrow \mathbb{R}\cup \{+ \infty \}$, is quasiconvex.
\end{Def}

\begin{Def}
Let \ \ $F= (F_1,...,F_m):\mathbb{R}^n\longrightarrow \mathbb{R}^m\cup \{+ \infty \}^m$ be a function, then $F$ is $\mathbb{R}^m_+$ -  convex if every component function of $F$, $F_i: \mathbb{R}^n\longrightarrow \mathbb{R}\cup \{+ \infty \}$, is convex.
\end{Def}
\subsection{Fréchet  and Limiting Subdifferentials}
\label{frechet}
\begin{Def}
 Let $f: \mathbb{R}^n \rightarrow \mathbb{R} \cup \{ +\infty \}$ be a proper function.
 \begin{enumerate}
 \item [(a)]For each $x \in \textnormal{dom}(f)$, the set of regular subgradients (also called Fréchet subdifferential) of $f$ at $x$, denoted by $\hat{\partial}f(x)$, is the set of vectors $v \in \mathbb{R}^n$ such that
\begin{center}
$f(y) \geq f(x) + \left\langle v,y-x\right\rangle + o(\left\|y - x\right\|)$, where $\lim \limits_{y \rightarrow x}\frac{o(\left\|y - x\right\|)}{\left\|y - x\right\|} =0$.
\end{center}
Or  equivalently, $\hat{\partial}f (x) := \left\{ v \in \mathbb{R}^n : \liminf \limits_{y\neq x,\ y \rightarrow x} \dfrac{f(y)- f(x)- \langle v , y - x\rangle}{\lVert y - x \rVert} \geq 0 \right \}$.\\  If $x \notin \textnormal{dom}(f)$ then $\hat{\partial}f(x) = \emptyset$.
\item [(b)]The set of general subgradients (also called limiting subdifferential) $f$ at $x \in \mathbb{R}^n$, denoted by $\partial f(x)$, is defined as follows:
\begin{center}
$\partial f(x) := \left\{ v \in \mathbb{R}^n : \exists\  x_l \rightarrow x, \ \ f(x_l) \rightarrow f(x), \ \ v_l \in \hat{\partial} f(x_l)\  \textnormal{and}\  v_l \rightarrow v \right \}$.
\end{center}
\end{enumerate}
\label{fech}
\end{Def}
\begin{proposicao}{\bf (Fermat’s rule generalized)}
If a proper function $f: \mathbb{R}^n \rightarrow \mathbb{R} \cup \{+ \infty \}$ has a local minimum at $\bar{x} \in \textnormal{dom}(f)$, then $0\in \hat{\partial} f\left(\bar{x}\right)$.
\label{otimo}
\end{proposicao}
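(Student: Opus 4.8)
The statement to prove is the generalized Fermat rule, and the plan is to verify the definition of the Fréchet subdifferential directly for the candidate subgradient $v=0$. Since $\bar x$ is a local minimum of $f$ and $\bar x\in\textnormal{dom}(f)$, there is a radius $\delta>0$ such that $f(y)\ge f(\bar x)$ for every $y$ in the ball $B(\bar x,\delta)$. First I would use the liminf characterization of $\hat\partial f(\bar x)$ given in Definition \ref{fech}(a): for any sequence (or net) $y\to\bar x$ with $y\neq\bar x$, eventually $y\in B(\bar x,\delta)$, so $f(y)-f(\bar x)\ge 0$ while $\lVert y-\bar x\rVert>0$; hence
\[
\frac{f(y)-f(\bar x)-\langle 0,\,y-\bar x\rangle}{\lVert y-\bar x\rVert}=\frac{f(y)-f(\bar x)}{\lVert y-\bar x\rVert}\ge 0 .
\]
Passing to the limit inferior as $y\to\bar x$ preserves this inequality, so $\liminf_{y\neq\bar x,\ y\to\bar x}\frac{f(y)-f(\bar x)-\langle 0,y-\bar x\rangle}{\lVert y-\bar x\rVert}\ge 0$, which is exactly the condition $0\in\hat\partial f(\bar x)$.

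Equivalently, and perhaps more transparently, I could use the $o(\cdot)$ form of the definition: taking the error function $o(\lVert y-\bar x\rVert)\equiv 0$, the inequality $f(y)\ge f(\bar x)+\langle 0,y-\bar x\rangle+o(\lVert y-\bar x\rVert)$ reduces to $f(y)\ge f(\bar x)$, which holds on $B(\bar x,\delta)$. Either route gives the claim in one line.

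There is no real obstacle here; the result is immediate from the definition. The only subtlety worth flagging is that the Fréchet subdifferential is a purely local object — the liminf only sees values of $f$ arbitrarily close to $\bar x$ — so local minimality, rather than global minimality, is enough, and the hypothesis $\bar x\in\textnormal{dom}(f)$ is needed precisely so that $\hat\partial f(\bar x)$ is not vacuously empty.
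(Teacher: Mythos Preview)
Your proof is correct: verifying directly from Definition~\ref{fech}(a) that the candidate $v=0$ satisfies the liminf inequality is exactly the right move, and your remarks about locality and the role of $\bar x\in\textnormal{dom}(f)$ are accurate.

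The paper does not actually prove this proposition; it simply cites Rockafellar and Wets \cite{Rockafellar}, Theorem~10.1. Your argument is therefore more self-contained than what the paper provides, and since the result follows in one line from the definition there is nothing to be gained by deferring to the reference.
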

\begin{proof}
See Rockafellar and Wets \cite{Rockafellar}, Theorem 10.1.
\end{proof}

\begin{proposicao}
Let $f: \mathbb{R}^n \rightarrow \mathbb{R} \cup \{+ \infty \}$ be a proper function.  Then, the following properties are true
\begin{enumerate}
\item[(i)]$\hat{\partial}f(x) \subset \partial f(x)$, for all $x \in \mathbb{R}^n$.
\item[(ii)]If $f$ is differentiable at $\bar{x}$ then $\hat{\partial}f(\bar{x}) = \{\nabla f (\bar{x})\}$, so $\nabla f (\bar{x})\in \partial f(\bar{x})$.
\item[(iii)] If $f$ is continuously differentiable in a neighborhood of $x$, then $\hat{\partial}f(x) = \partial f(x) = \{\nabla f (x)\}$.
\item[(iv)] If \ $ g = f + h $ with $f$ finite at $\bar{x}$ and $h$ is continuously differentiable in a neighborhood of $\bar{x}$, then $\hat{\partial}g(\bar{x}) = \hat{\partial}f(\bar{x}) + \nabla h(\bar{x})$ and\
$\partial g(\bar{x}) = \partial f(\bar{x}) + \nabla h(\bar{x})$.
\end{enumerate}
\label{somafinita}
\end{proposicao}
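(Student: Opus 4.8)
The plan is to derive all four items directly from the definitions of the Fréchet and limiting subdifferentials in Definition \ref{fech}, since nothing deeper than those definitions and the first-order expansion of a differentiable function is needed. For (i), I would simply observe that if $v \in \hat{\partial}f(x)$, then the constant sequences $x_l \equiv x$ and $v_l \equiv v$ satisfy $x_l \to x$, $f(x_l) \to f(x)$, $v_l \in \hat{\partial}f(x_l)$ and $v_l \to v$, so $v \in \partial f(x)$ by the defining formula for the limiting subdifferential.

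For (ii), the inclusion $\nabla f(\bar{x}) \in \hat{\partial}f(\bar{x})$ is immediate from the differentiability expansion $f(y) = f(\bar{x}) + \langle \nabla f(\bar{x}), y - \bar{x}\rangle + o(\|y - \bar{x}\|)$, which is exactly the inequality in Definition \ref{fech}(a). For the reverse inclusion I would take any $v \in \hat{\partial}f(\bar{x})$, subtract the defining inequality from the differentiability expansion to obtain $\langle v - \nabla f(\bar{x}), y - \bar{x}\rangle \le o(\|y - \bar{x}\|)$ for $y$ near $\bar{x}$, then substitute $y = \bar{x} + t\,(v - \nabla f(\bar{x}))$ with $t \downarrow 0$; dividing by $t$ forces $\|v - \nabla f(\bar{x})\|^2 \le 0$, hence $v = \nabla f(\bar{x})$. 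Item (iii) then follows: by (ii) applied at every point of a neighborhood of $x$ we have $\hat{\partial}f = \{\nabla f\}$ there, and since $\partial f(x)$ consists of limits of vectors $\nabla f(x_l)$ with $x_l \to x$, continuity of $\nabla f$ forces $\partial f(x) = \{\nabla f(x)\}$; combined with (i) this yields the stated chain of equalities.

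For (iv) I would use the $o(\cdot)$ characterization of $\hat{\partial}$ together with the expansion $h(y) = h(\bar{x}) + \langle \nabla h(\bar{x}), y - \bar{x}\rangle + o(\|y - \bar{x}\|)$. If $v \in \hat{\partial}f(\bar{x})$, adding the two relations gives $g(y) \ge g(\bar{x}) + \langle v + \nabla h(\bar{x}), y - \bar{x}\rangle + o(\|y-\bar{x}\|)$, so $v + \nabla h(\bar{x}) \in \hat{\partial}g(\bar{x})$; applying the same reasoning to $g = f + h$ and to $f = g + (-h)$, using that both $h$ and $-h$ are continuously differentiable near $\bar{x}$, yields the equality $\hat{\partial}g(\bar{x}) = \hat{\partial}f(\bar{x}) + \nabla h(\bar{x})$. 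The statement for $\partial$ follows by passing to the limit: given $v \in \partial g(\bar{x})$ with witnessing sequences $x_l \to \bar{x}$, $g(x_l) \to g(\bar{x})$, $v_l \in \hat{\partial}g(x_l)$, $v_l \to v$, I would use the Fréchet sum rule just established at each $x_l$ to write $v_l - \nabla h(x_l) \in \hat{\partial}f(x_l)$, note $f(x_l) = g(x_l) - h(x_l) \to f(\bar{x})$ and $\nabla h(x_l) \to \nabla h(\bar{x})$ by continuity, hence $v - \nabla h(\bar{x}) \in \partial f(\bar{x})$; the reverse inclusion is symmetric.

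The computations are all routine; the only points requiring a little care are the reverse inclusion in (ii), where the specific choice of test direction $y = \bar{x} + t(v - \nabla f(\bar{x}))$ is exactly what closes the argument, and, in (iv), checking that $f(x_l) \to f(\bar{x})$ and $g(x_l) \to g(\bar{x})$ are equivalent given continuity of $h$ near $\bar{x}$, so that the witnessing sequences for $\partial g$ and $\partial f$ transfer to one another. I expect no genuine obstacle here, and for the standard formulations I would also cite Rockafellar and Wets \cite{Rockafellar}.
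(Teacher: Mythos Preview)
Your proposal is correct: each item follows cleanly from Definition~\ref{fech} and the first-order expansion of a differentiable function, and the care you take in (ii) with the test direction and in (iv) with the transfer of witnessing sequences (using continuity of $h$ and $\nabla h$ near $\bar{x}$) closes the only places where something could go wrong.

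The paper itself does not supply a proof at all; it simply refers the reader to Rockafellar and Wets \cite{Rockafellar}, Exercise~8.8. So your approach is not so much a different route as a self-contained elaboration of what the paper leaves as a reference. The benefit of your write-up is that it keeps the argument internal to the definitions already stated in the paper, which is appropriate here since nothing beyond those definitions is needed; the benefit of the paper's approach is brevity and the acknowledgment that these are standard facts in variational analysis. Either is acceptable, and your closing remark that you would also cite \cite{Rockafellar} aligns the two.
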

\begin{proof}
See Rockafellar and Wets \cite{Rockafellar}, Exercise $8.8$, page $304$.
\end{proof}


\subsection{{\bf$\varepsilon$}-Subdiffential}
We present some important concepts and results on $\varepsilon$-subdifferential.  The theory of these facts  can be found, for example, in Jofre et  al. \cite{Jofre} and Rockafellar and Wets \cite {Rockafellar}.
\begin{Def}
Let $f: \mathbb{R}^n \rightarrow \mathbb{R} \cup \{+ \infty \}$ be a proper lower semicontinuous function and let $\varepsilon$ be an arbitrary nonnegative real number. The Fréchet $\varepsilon$-subdifferential of $f$ at $x \in \textnormal{dom}(f)$ is defined by
\begin{eqnarray}
\hat{\partial}_{\varepsilon}f (x) := \left\{ x^* \in \mathbb{R}^n : \liminf \limits_{\lVert h \rVert  \rightarrow 0} \dfrac{f(x + h)- f(x)- \langle x^* , h\rangle}{\lVert h \rVert} \geq - \varepsilon \right \}
\label{frechet1}
\end{eqnarray}
\label{frechet3}
\end{Def}
\begin{obs}
When $\varepsilon = 0$, $(\ref{frechet1})$ reduces to the well known Fréchet subdifferential, wich is denoted by $\hat{\partial}f(x)$, according to \textnormal{Definition} $\ref{fech}$.  More precisely, 
\begin{center}
$x^* \in \hat{\partial}f(x)$, if and only if, for each $\eta > 0 $ there exists $\delta > 0$ such that\\
$\langle x^* , y - x\rangle \leq f(y) - f(x) + \eta\Vert y - x \Vert$, for all $y \in x + \delta \textnormal{B}$,
\end{center}
where $B$ is the closed unit ball in $\R^n$ centered at zero. Therefore $\hat{\partial}f(x)=\hat{\partial}_{0}f(x)\subset\hat{\partial}_{\varepsilon}f(x).$
\label{frechet2}
\end{obs}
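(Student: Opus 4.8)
The plan is to unwind the definition of the $\liminf$ appearing in Definition \ref{frechet3} and to verify the three assertions of the remark in turn. First I would note that putting $\varepsilon = 0$ in $(\ref{frechet1})$ turns the defining condition into $\liminf_{\lVert h\rVert\to 0}\tfrac{f(x+h)-f(x)-\langle x^*,h\rangle}{\lVert h\rVert}\geq 0$, which is word for word the second (equivalent) characterization of $\hat{\partial}f(x)$ recorded in Definition \ref{fech}(a). Hence $\hat{\partial}_{0}f(x)=\hat{\partial}f(x)$, with nothing further to prove.

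For the ``more precisely'' equivalence, I would rewrite the condition $\liminf_{\lVert h\rVert\to 0}\phi(h)\geq 0$ in its $\eta$--$\delta$ form: it holds if and only if for every $\eta>0$ there is $\delta>0$ such that $\phi(h)\geq -\eta$ whenever $0<\lVert h\rVert<\delta$. Taking $\phi(h)=\big(f(x+h)-f(x)-\langle x^*,h\rangle\big)/\lVert h\rVert$ and multiplying through by $\lVert h\rVert>0$, this reads $\langle x^*,h\rangle\leq f(x+h)-f(x)+\eta\lVert h\rVert$ for all $h$ with $0<\lVert h\rVert<\delta$. Substituting $y=x+h$ and observing that this inequality is trivially true at $y=x$, we recover exactly the statement that for each $\eta>0$ there is $\delta>0$ with $\langle x^*,y-x\rangle\leq f(y)-f(x)+\eta\lVert y-x\rVert$ for all $y\in x+\delta B$. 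Both implications of the ``if and only if'' are this same chain of equivalences read in the two directions.

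Finally, the inclusion $\hat{\partial}_{0}f(x)\subset\hat{\partial}_{\varepsilon}f(x)$ is immediate: the right-hand bound in the defining inequality of $(\ref{frechet1})$ is nondecreasing as $\varepsilon$ grows, so since $\varepsilon\geq 0$, any $x^*$ satisfying $\liminf\geq 0$ \emph{a fortiori} satisfies $\liminf\geq -\varepsilon$.

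I do not expect a genuine obstacle here; the single point needing a word of care is the value $y=x$ (equivalently $h=0$), which is excluded from the $\liminf$ but must be admitted in the ball $x+\delta B$ — this is harmless, since the claimed inequality holds trivially there. I would also make explicit that the quantifier ``for each $\eta>0$'' in the remark is precisely the ``$\eta>0$ arbitrary'' implicit in the $\liminf$, so that no uniformity in $\eta$ is required.
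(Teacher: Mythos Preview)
Your argument is correct. The paper states this as a remark without supplying any proof, so there is nothing to compare against; your write-up simply fills in the routine $\eta$--$\delta$ unpacking of the $\liminf$ that the paper leaves to the reader, and your handling of the boundary case $y=x$ is the only point that needed a word of care.
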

From Definition 5.1 of Treiman, \cite{Treiman},
\begin{center}
$x^* \in \hat{\partial}_{\epsilon}f (x)\Leftrightarrow x^* \in \hat{\partial}(f + \epsilon\Vert . - x\Vert)(x)$.
\end{center}
Equivalently, $x^* \in \hat{\partial}_{\epsilon}f (x)$, if and only if, for each $\eta > 0$, there exists $\delta > 0$ such that 
\begin{center}
$\langle x^* , y - x\rangle \leq f(y) - f(x) + (\epsilon + \eta)\Vert y - x \Vert$, for all $y \in x + \delta \textnormal{B}$.
\end{center}
\vspace{0,2cm}
We now defined a new kind of approximate subdifferential.
\begin{Def}
The limiting Fréchet $\varepsilon$-subdifferential of $f$ at $x \in \textnormal{dom} (f)$ is defined by 
\begin{eqnarray}
\partial_\varepsilon f(x) := \limsup \limits_{y \stackrel{f}{\longrightarrow} x} \hat{\partial}_{\varepsilon}f (y)
\end{eqnarray}
where $$\limsup \limits_{y \stackrel{f}{\longrightarrow} x} \hat{\partial}_{\varepsilon}f (y):=\lbrace x^* \in \mathbb{R}^n: \exists\   x_l \longrightarrow x, f(x_l)\longrightarrow f(x), x^*_l \longrightarrow x^* \ \textnormal{with}\  x^*_l \in \hat{\partial}_{\varepsilon}f (x_l)\,  \rbrace$$
\end{Def}
\vspace{0,5cm}
In the case where $f$ is continuously differentiable, the limiting Fréchet $\varepsilon$-subdifferential takes a very simple form, according to the following proposition
\begin{proposicao}
Let $f: \mathbb{R}^n \rightarrow \mathbb{R}$ be a continuously differentiable function at $x$ with derivative 
$\nabla f (x)$.  Then
\begin{center}
$\partial_\varepsilon f(x) = \nabla f (x) + \varepsilon B$.
\end{center}
\label{fdif}
\end{proposicao}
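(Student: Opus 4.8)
The plan is to first compute the ordinary Fréchet $\varepsilon$-subdifferential at a point where $f$ is differentiable, and then pass to the limiting version $\partial_{\varepsilon}f(x)=\limsup_{y\stackrel{f}{\longrightarrow}x}\hat{\partial}_{\varepsilon}f(y)$ using continuity of $\nabla f$. Reading ``continuously differentiable at $x$'' as $C^1$ on a neighborhood of $x$ (as in Proposition \ref{somafinita}(iii)), I would first establish the pointwise claim: \emph{if $f$ is differentiable at $y$ then $\hat{\partial}_{\varepsilon}f(y)=\nabla f(y)+\varepsilon B$.} For the inclusion $\supseteq$, take $x^{*}=\nabla f(y)+\varepsilon b$ with $\|b\|\le 1$ and write the difference quotient in $(\ref{frechet1})$ as
\[
\frac{f(y+h)-f(y)-\langle\nabla f(y),h\rangle}{\|h\|}\;-\;\varepsilon\,\frac{\langle b,h\rangle}{\|h\|};
\]
the first term tends to $0$ by differentiability and the second is bounded below by $-\varepsilon$ by Cauchy--Schwarz, so the $\liminf$ is $\ge-\varepsilon$ and $x^{*}\in\hat{\partial}_{\varepsilon}f(y)$. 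For $\subseteq$, given $x^{*}\in\hat{\partial}_{\varepsilon}f(y)$ set $w:=x^{*}-\nabla f(y)$ (the case $w=0$ being trivial) and evaluate the same quotient along the path $h=tw$, $t\downarrow0$: by differentiability it converges to $-\|w\|$, so $-\varepsilon\le\liminf_{\|h\|\to0}(\cdots)\le-\|w\|$, hence $\|w\|\le\varepsilon$ and $x^{*}\in\nabla f(y)+\varepsilon B$.

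Next I would pass to the limiting $\varepsilon$-subdifferential. Since $f$ is $C^1$ near $x$, the pointwise claim gives $\hat{\partial}_{\varepsilon}f(y)=\nabla f(y)+\varepsilon B$ for every $y$ in a neighborhood of $x$, and since $f$ is continuous the condition $f(x_l)\to f(x)$ in the definition of $\partial_{\varepsilon}f(x)$ holds automatically whenever $x_l\to x$. The inclusion $\supseteq$ then follows from $\hat{\partial}_{\varepsilon}f(x)\subseteq\partial_{\varepsilon}f(x)$ (take the constant sequence $x_l\equiv x$) together with the pointwise claim. For $\subseteq$, take $x^{*}\in\partial_{\varepsilon}f(x)$ with $x_l\to x$ and $x_l^{*}\to x^{*}$, $x_l^{*}\in\nabla f(x_l)+\varepsilon B$; write $x_l^{*}=\nabla f(x_l)+\varepsilon b_l$ with $\|b_l\|\le1$, extract by compactness of $B$ a subsequence with $b_l\to b$, $\|b\|\le1$, and use continuity of $\nabla f$ to conclude $x^{*}=\nabla f(x)+\varepsilon b\in\nabla f(x)+\varepsilon B$.

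None of the steps is deep; the only point that needs a little care is the inclusion $\subseteq$ in the pointwise claim, namely choosing the test direction $h=tw$ that drives the quotient in $(\ref{frechet1})$ down to exactly $-\|w\|$, so that the constraint $\liminf\ge-\varepsilon$ collapses to $\|w\|\le\varepsilon$. As an alternative route to that claim one could use the identity recorded just after Remark \ref{frechet2}, $x^{*}\in\hat{\partial}_{\varepsilon}f(x)\Leftrightarrow x^{*}\in\hat{\partial}(f+\varepsilon\|\cdot-x\|)(x)$, together with Proposition \ref{somafinita}(iv) (with the smooth part of $f$ playing the role of the differentiable summand) and the fact that the Fréchet subdifferential of $\varepsilon\|\cdot-x\|$ at $x$ is $\varepsilon B$; but the direct estimate above seems shorter.
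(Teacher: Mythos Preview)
Your argument is correct. The pointwise computation of $\hat{\partial}_{\varepsilon}f(y)=\nabla f(y)+\varepsilon B$ via the direct estimates on the difference quotient is clean, and the passage to the limiting object using continuity of $\nabla f$ and closedness of $B$ is routine; your reading of ``continuously differentiable at $x$'' as $C^1$ on a neighborhood is the intended one (cf.\ Proposition~\ref{somafinita}(iii)). A minor remark: in the final $\subseteq$ step the compactness extraction is unnecessary, since $\varepsilon b_l=x_l^{*}-\nabla f(x_l)$ already converges to $x^{*}-\nabla f(x)$ and $\varepsilon B$ is closed.

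The paper itself does not give a proof: it simply cites Jofr\'e et al.~\cite{Jofre}, Proposition~2.8. So you have supplied a self-contained argument where the paper defers to the literature. The alternative route you sketch at the end, via the identity $\hat{\partial}_{\varepsilon}f(x)=\hat{\partial}\bigl(f+\varepsilon\|\cdot-x\|\bigr)(x)$ and the sum rule of Proposition~\ref{somafinita}(iv), is also viable and is closer in spirit to how such results are often derived in the variational-analysis literature; your direct estimate is indeed shorter here because it avoids invoking the (nontrivial) fact that the Fr\'echet subdifferential of $\varepsilon\|\cdot-x\|$ at $x$ equals $\varepsilon B$.
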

\begin{proof}
See Jofré et al., \cite{Jofre}, Proposition 2.8.
\end{proof}


\subsection{Fejér convergence}
\begin{Def}
A seguence $\left\{y_k\right\} \subset \mathbb{R}^n$ is said to be Fejér convergent to a set $U\subseteq \mathbb{R}^n$ if,
$\left\|y_{k+1} - u \right\|\leq\left\|y_k - u\right\|, \forall \ k \in \mathbb{N},\ \forall \ u \in U$.
\end{Def}
The following result on Fejér convergence is well known.
\begin{lema}
If $\left\{y_k\right\}\subset \mathbb{R}^n$ is Fejér convergent to some set $U\neq \emptyset$, then:
\begin{enumerate}
\item [(i)]The sequence $\left\{y_k\right\}$ is bounded.
\item [(ii)]If an accumulation point $y$ of $\left\{y_k\right\}$ belongs to $ U$, then $\lim  \limits_{k\rightarrow +\infty}y_k = y$.
 \end{enumerate}
\label{fejerlim1}
\end{lema}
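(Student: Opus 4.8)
The plan is to exploit the single structural fact available—that for every fixed $u \in U$ the scalar sequence $\{\norm{y_k - u}\}$ is nonincreasing—and then extract both conclusions from elementary properties of monotone real sequences. No deep machinery is needed; the only ideas are (a) a bounded-below monotone sequence converges, and (b) its limit is determined by any convergent subsequence.

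For part (i), I would fix an arbitrary $u \in U$ (possible since $U \neq \emptyset$). The Fejér inequality $\norm{y_{k+1} - u} \leq \norm{y_k - u}$ gives, by induction, $\norm{y_k - u} \leq \norm{y_0 - u}$ for all $k \in \N$. The triangle inequality then yields $\norm{y_k} \leq \norm{y_k - u} + \norm{u} \leq \norm{y_0 - u} + \norm{u}$ for every $k$, so $\{y_k\}$ lies in a ball of fixed radius and is therefore bounded.

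For part (ii), let $y$ be an accumulation point of $\{y_k\}$ with $y \in U$, and pick a subsequence $\{y_{k_j}\}$ with $y_{k_j} \to y$. Taking $u = y$ in the Fejér condition, the sequence $\{\norm{y_k - y}\}_{k\in\N}$ is nonincreasing and bounded below by $0$, hence converges to some $\ell \geq 0$. Since $\{\norm{y_{k_j} - y}\}_j$ is a subsequence of this convergent sequence, it has the same limit $\ell$; but $y_{k_j} \to y$ forces $\norm{y_{k_j} - y} \to 0$, so $\ell = 0$. Consequently $\norm{y_k - y} \to 0$, i.e.\ $\lim_{k\to\infty} y_k = y$.

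I do not anticipate a genuine obstacle here; the proof is entirely routine. The only point requiring a moment's care is the logical order in part (ii): one must first establish that the \emph{full} sequence $\{\norm{y_k - y}\}$ converges (using $y \in U$), and only then use the subsequence to identify its limit as $0$—reversing this order would not work, since an arbitrary subsequence tending to $0$ does not by itself pin down the behavior of the whole sequence.
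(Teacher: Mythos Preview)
Your argument is correct and is the standard proof of this elementary lemma. The paper itself does not give a proof at all---it simply cites Schott \cite{Schott}, Theorem 2.7---so there is no in-paper argument to compare against; your self-contained treatment is exactly what one would expect and would be a suitable replacement for the citation.
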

\begin{proof}
See Schott \cite{Schott}, Theorem $2.7$.
\end{proof}


\section{The Problem}

We are interested in solving the multiobjective optimization problem (MOP):
\begin{eqnarray}
\textrm{min}\lbrace F(x): x \in \mathbb{R}^n\rbrace
\label{pom3}
\end{eqnarray}
where $F=\left( F_1, F_2,..., F_m\right): \mathbb{R}^n\longrightarrow \mathbb{R}^m\cup \{+ \infty \}^m$ is a vector function satisfying the following assumption:
\begin{description}
\item [$\bf (C_{1.1})$]  $F$ is a proper lower semicontinuous vector function on $\mathbb{R}^n$, i.e, each $F_i:\mathbb{R}^n\longrightarrow \mathbb{R}^m\cup \{+ \infty \}$, $i=1,...,m$, is  a proper lower semicontinuous function.
\item [$\bf (C_{1.2})$]   $0 \preceq F.$
\end{description}
\subsection{A quasiconvex model in demand theory}
\noindent

Let $n$ be a finite number of consumer goods. A consumer is an agent who must choose how much to consume of each good.  An ordered set of numbers representing the amounts consumed of each good set is called vector of consumption, and denoted by $ x =  (x_1, x_2, ...,  x_n) $ where $ x_i $ with $ i = 1,2,  ..., n $,  is the quantity consumed  of good $i$. Denote by $ X $, the feasible set of these vectors  which will be called the  set of consumption,  usually in economic applications we have $ X  \subset \mathbb{R} ^ n_ + $.

In the classical approach of demand theory, the analysis of consumer behavior starts specifying a preference relation over the set $X,$ denoted by $\succeq$. The notation: $  "x \succeq  y "  $ means that "$ x $ is at least as good as  $ y $" or "$  y $ is not preferred to $x$". This preference relation  $ \succeq  $ is assumed rational, i.e,  is complete because the consumer is able to order all possible combinations of goods, and  transitive, because consumer preferences are consistent, which means if the consumer prefers $\bar{x}$ to $\bar{y} $ and $\bar{y}$ to $\bar{z}$,  then he prefers $\bar{x}$ to $\bar{z} $ (see Definition  3.B.1  of Mas-Colell et al. \cite{Colell}).

The quasiconvex model for a convex preference relation $\succeq,$ is ${\ max}\{ \mu(x) :x \in X\},
$ where $\mu$ is the utility function representing the preference, see Papa Quiroz et al. \cite{PapaLenninOliveira} for more detail. Now consider a multiple criteria, that is, consider  $ m $ convex preference relations denoted by $\succeq_i, i=1,2,...,m.$ Suppose that for each preference $\succeq_i,$  there exists an utility  function, $ \mu_i,$ respectively, then the problem of maximizing the consumer preference on  $ X $ is equivalent to solve the quasiconcave multiobjective optimization problem
\begin{eqnarray*}
\textnormal{(P')\ max}\{ (\mu_{1}(x), \mu_{2}(x), ..., \mu_{m}(x)) \in \mathbb{R}^m :x \in X\}.
\end{eqnarray*}
Since there is not a single point which maximize all the functions simultaneously the concept of optimality is established in terms of Pareto optimality or efficiency. Taking F = $ (- \mu_1, - \mu_2, ..., - \mu_m) $, we obtain a minimization problem with quasiconvex multiobjective function, since each  component function is quasiconvex one.

\section{Exact algorithm}
In this section, to solve the problem $(\ref {pom3}),$ we propose a linear scalarization proximal point algorithm with quadratic regularization using the Fréchet subdifferential,  denoted by {\bf SPP} algorithm. \\ \\
 {\bf SPP Algorithm }
\begin{description}
\item [\bf Initialization:] Choose  an arbitrary starting point
\begin{eqnarray}
x^0\in\mathbb{R}^n 
\label{inicio3}
\end{eqnarray}
\item [Main Steps:] Given $x^k$ finding $x^{k+1} $ such that
 \begin{eqnarray}
 0 \in \hat{\partial}\left( \left\langle F(.), z_k\right\rangle  + \dfrac{\alpha_k}{2} \Vert\ .\  - x^k \Vert ^2  + \delta_{\Omega_k}(.) \right) (x^{k+1}) 
 \label{subdiferencial3}
 \end{eqnarray}
where $\hat{\partial}$ is the Fréchet subdifferential, $\Omega_k= \left\{ x\in \mathbb{R}^n: F(x) \preceq F(x^k)\right\}$, $\alpha_k > 0 $,\\ $\left\{z_k\right\} \subset \mathbb{R}^m_+\backslash \left\{0\right\}$ and $\left\|z_k\right\| = 1$.
\item [Stop criterion:] If $x^{k+1}=x^{k} $ or $x^{k+1}$ is a Pareto critical point, then stop.                                           Otherwise to do $k \leftarrow k + 1 $ and return to Main Steps.
\end{description}

\subsection{Existence of the iterates}
\begin{teorema}
 \label{existe0}
 Let $F:\mathbb{R}^n\longrightarrow \mathbb{R}^m\cup \{+ \infty \}^m$ be a vector function satisfying $\bf (C_{1.1}),$ and $\bf (C_{1.2}),$ then the sequence $\left\{x^k\right\}$, generated by the ${\bf SPP}$ algorithm, is well defined.
\end{teorema}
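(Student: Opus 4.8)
The plan is to realise each iterate $x^{k+1}$ as a global minimizer of the regularized scalarized function
\[
\varphi_k(x) := \left\langle F(x), z_k\right\rangle + \dfrac{\alpha_k}{2}\Vert x - x^k\Vert^2 + \delta_{\Omega_k}(x),
\]
and then invoke the generalized Fermat rule (Proposition~\ref{otimo}) to conclude that such a minimizer satisfies (\ref{subdiferencial3}). I would argue by induction on $k$, with the inductive hypothesis that $x^k$ is well defined and belongs to $\textnormal{dom}(F)$; for $k=0$ this holds for the chosen starting point $x^0$.

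Given $x^k \in \textnormal{dom}(F)$, the first task is to verify that $\varphi_k$ is proper, lower semicontinuous and coercive, so that Proposition~\ref{coercivaesemicont} applies. The set $\Omega_k = \{x : F(x)\preceq F(x^k)\}$ is nonempty since $x^k \in \Omega_k$; it is closed, being the intersection over $i$ of the sublevel sets $\{x : F_i(x)\le F_i(x^k)\}$, each closed because $F_i$ is lower semicontinuous by $(C_{1.1})$; and $\Omega_k \subseteq \textnormal{dom}(F)$, since $x \in \Omega_k$ forces $F_i(x)\le F_i(x^k) < +\infty$ for every $i$. Hence $\textnormal{dom}(\varphi_k) = \Omega_k \neq \emptyset$, so $\varphi_k$ is proper. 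It is lower semicontinuous as a sum of the lower semicontinuous function $\langle F(\cdot), z_k\rangle$ (a nonnegative combination of the lower semicontinuous $F_i$, since $z_k \in \mathbb{R}^m_+$), the continuous quadratic term, and the lower semicontinuous indicator $\delta_{\Omega_k}$. It is coercive because, using $(C_{1.2})$ together with $z_k\in\mathbb{R}^m_+$, we get $\langle F(x), z_k\rangle \ge 0$, and the indicator is nonnegative, whence $\varphi_k(x)\ge \tfrac{\alpha_k}{2}\Vert x - x^k\Vert^2 \to +\infty$ as $\Vert x\Vert \to +\infty$.

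By Proposition~\ref{coercivaesemicont}, the set $\textnormal{arg min}\{\varphi_k(x):x\in\mathbb{R}^n\}$ is nonempty; let $x^{k+1}$ be any of its elements. Then $x^{k+1}$ is in particular a local minimizer of the proper function $\varphi_k$ lying in $\textnormal{dom}(\varphi_k)$, so Proposition~\ref{otimo} yields $0 \in \hat{\partial}\varphi_k(x^{k+1})$, which is exactly (\ref{subdiferencial3}). Moreover $x^{k+1} \in \Omega_k \subseteq \textnormal{dom}(F)$, which restores the induction hypothesis and lets the construction be iterated; this proves that $\{x^k\}$ is well defined.

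I do not anticipate a genuine difficulty, since this is a routine lower-semicontinuity-plus-coercivity existence argument. The points that need care are: that the linear scalarization $\langle F(\cdot), z_k\rangle$ does not spoil the coercivity furnished by the quadratic regularization, which is exactly where hypothesis $(C_{1.2})$ enters; that $\Omega_k$ is closed, which relies on $(C_{1.1})$; and the bookkeeping that $x^k$ stays in $\textnormal{dom}(F)$ along the iteration so that $\varphi_k$ remains proper at every step. Note that quasiconvexity of $F$ plays no role in this particular statement.
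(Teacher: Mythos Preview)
Your proposal is correct and follows essentially the same approach as the paper: define $\varphi_k$, check it is proper, lower semicontinuous and coercive so that Proposition~\ref{coercivaesemicont} yields a global minimizer, and then apply Proposition~\ref{otimo} to obtain (\ref{subdiferencial3}). Your version is in fact more careful than the paper's, which simply asserts lower semicontinuity and coercivity without spelling out how $(C_{1.1})$ and $(C_{1.2})$ are used, and does not track the inductive bookkeeping that $x^k\in\textnormal{dom}(F)$.
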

\begin{proof}
Let $x^0 \in \mathbb{R}^n $ be an arbitrary point given in the initialization step. Given $x^k$, define $\varphi_k(x)=\left\langle F(x), z_k\right\rangle + \frac{\alpha_k}{2}\left\|x - x^k\right\|^2 +\delta_{\Omega_k}(x)$, where $\delta_{\Omega_k}(.)$ is the indicator function of ${\Omega_k}$. Then we have that min$\{\varphi_k(x): x \in \mathbb{R}^n\}$ is equivalent to min$\{\left\langle F(x), z_k\right\rangle + \frac{\alpha_k}{2}\left\|x - x^k\right\|^2: x \in \Omega_k\}$. As $\varphi_k$ is lower semicontinuous and coercive then, using Proposition \ref{coercivaesemicont},  we obtain that there exists $x^{k+1} \in \R^n$ which is a global minimum of $\varphi_k.$ From Proposition \ref{otimo}, $x^{k+1}$ satisfies:
 $$ 0 \in \hat{ \partial}\left( \left\langle F(.), z_k\right\rangle  + \dfrac{\alpha_k}{2} \Vert\ .\  - x^k \Vert ^2 + \delta_{\Omega_k}(.)\right) (x^{k+1})$$
\end{proof}
\subsection{Fejér convergence Property} 
To obtain some desirable properties it is necessary to assume the following assumptions on the function $F$ and the initial point $x^0$ :
\begin{description}
\item [$\bf (C_2)$] $F$ is $\mathbb{R}^m_+$-quasiconvex;

\item [${\bf (C_3)}$] The set $\left(F(x^0) - \mathbb{R}^m_+\right)\cap F(\mathbb{R}^n)$ is $\mathbb{R}^m_+$ - complete,  meaning that for all sequences $\left\{a_k\right\}\subset\mathbb{R}^n$, with $a_0 = x^0$, such that $F(a_{k+1}) \preceq F(a_k)$, there exists  $ a \in \mathbb{R}^n$ such that $F(a)\preceq F(a_k), \ \forall \ k \in \mathbb{N}$.
\end{description}
\begin{obs}
The assumption $ {\bf (C_3)}$ is cited in several works involving the proximal point method for convex functions, see Bonnel et al. \cite{Iusem}, Ceng and Yao \cite {Ceng} and, Villacorta and Oliveira \cite {Villacorta}.
\end{obs}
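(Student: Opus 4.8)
The final statement is a Remark rather than a mathematical assertion, so there is nothing to derive; what is at stake is a bibliographic verification that the $\mathbb{R}^m_+$-completeness hypothesis $\bf (C_3)$ genuinely appears in the cited literature on proximal point methods. Accordingly, my plan is not to construct a logical argument but to substantiate the citation by inspecting each of the three references and exhibiting, in each, the hypothesis that plays the role of $\bf (C_3)$.

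First I would turn to Bonnel et al. \cite{Iusem}, which develops a proximal point scheme for vector optimization with respect to a closed convex cone $C$. There the relevant hypothesis is the $C$-completeness of a set of the form $(F(x^0)-C)\cap F(\mathbb{R}^n)$: for every sequence $\{a_k\}$ with $F(a_{k+1})\preceq F(a_k)$ (ordering induced by $C$) there exists a point $a$ with $F(a)\preceq F(a_k)$ for all $k\in\mathbb{N}$. Specializing the cone to $C=\mathbb{R}^m_+$ recovers $\bf (C_3)$ verbatim, so I would quote the precise statement, with its section and hypothesis label, to make the correspondence explicit. Next I would examine Ceng and Yao \cite{Ceng} and Villacorta and Oliveira \cite{Villacorta}. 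In each case the completeness condition is imposed for the same purpose, namely to guarantee that the cone-decreasing sequence produced by the proximal iteration admits a lower bound lying in the image set, which is exactly what $\bf (C_3)$ supplies here; I would record the hypothesis label used by those authors and confirm that, up to notation for the ordering cone, it coincides with $\bf (C_3)$.

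The only mild obstacle is stylistic rather than mathematical: the three sources phrase the completeness condition in slightly different notation (different symbols for the ordering cone, for the level sequence $\{a_k\}$, and for the reference point $x^0$), so the verification amounts to reconciling notation and confirming that each instance reduces to the formulation in $\bf (C_3)$ when the ordering cone is $\mathbb{R}^m_+$. Since no analytic step is involved and no quantity must be estimated, there is no genuine difficulty to overcome; the Remark stands once these three correspondences have been made, and its role in the paper is merely to locate $\bf (C_3)$ within the existing proximal-point tradition rather than to assert anything new.
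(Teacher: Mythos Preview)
Your assessment is correct: the statement is a bibliographic Remark with no mathematical content, and the paper provides no proof for it either. Your explanation of how one would substantiate the citation is entirely appropriate and matches the spirit of the paper, which simply records the Remark without further argument.
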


 \begin{proposicao}
Let $F:\mathbb{R}^n\longrightarrow \mathbb{R}^m\cup \{+ \infty \}^m$ be a function that satisfies the assumptions $\bf (C_{1.1})$ and $\bf (C_2)$.  If $g  \in \hat{\partial}\left( \left\langle F(.), z\right\rangle + \delta_{\Omega} \right)(x)$, with $z \in \mathbb{R}^m_+\backslash \left\{0\right\}$, and $F(y) \preceq F(x)$, with $y \in \Omega$, and $\Omega \subset \mathbb{R}^n$ a closed and convex set, then 
$\left\langle g , y - x\right\rangle \leq 0$.
\label{propfejer2}
\end{proposicao}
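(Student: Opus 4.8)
The plan is to read the Fréchet-subdifferential inequality for $g$ only along the segment joining $x$ to $y$, where $\mathbb{R}^m_+$-quasiconvexity of $F$ together with convexity of $\Omega$ forces the scalarized objective $h:=\langle F(\cdot),z\rangle+\delta_{\Omega}$ to satisfy $h\le h(x)$ on that segment; the resulting one-sided estimate then pins down the sign of $\langle g,y-x\rangle$ directly.

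First I would dispose of the case $y=x$, where the conclusion is trivial, and assume $y\neq x$. Note that $g\in\hat{\partial}h(x)$ forces $x\in\textnormal{dom}(h)$, hence $x\in\Omega$ and $\langle F(x),z\rangle<+\infty$. For $t\in(0,1]$ set $w_t:=(1-t)x+ty=x+t(y-x)$. Since $\Omega$ is convex and $x,y\in\Omega$, we have $w_t\in\Omega$, so $\delta_{\Omega}(w_t)=0=\delta_{\Omega}(x)$. By $\bf (C_2)$ each $F_i$ is quasiconvex, so $F_i(w_t)\le\max\{F_i(x),F_i(y)\}$; and $F(y)\preceq F(x)$ gives $F_i(y)\le F_i(x)$ for every $i$, whence $F_i(w_t)\le F_i(x)$, i.e. $F(w_t)\preceq F(x)$. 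Because $z\in\mathbb{R}^m_+$, this yields $\langle F(w_t),z\rangle\le\langle F(x),z\rangle$ (both finite, since $x\in\textnormal{dom}(h)$ and $F$ is proper). Adding the indicator terms, $h(w_t)\le h(x)$ for all $t\in(0,1]$.

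Next I would invoke the definition of the Fréchet subdifferential: from $g\in\hat{\partial}h(x)$,
$$\liminf_{w\to x,\ w\neq x}\frac{h(w)-h(x)-\langle g,w-x\rangle}{\|w-x\|}\ge 0,$$
and since a limit inferior over a subset of approach directions is no smaller than the full one, the same inequality holds when $w$ ranges over $\{w_t:t\in(0,1]\}$ with $t\to 0^+$. Along this half-line $(w_t-x)/\|w_t-x\|=(y-x)/\|y-x\|$ is constant, and $h(w_t)-h(x)\le 0$, so
$$\frac{h(w_t)-h(x)-\langle g,w_t-x\rangle}{\|w_t-x\|}\le\frac{-\langle g,w_t-x\rangle}{\|w_t-x\|}=-\frac{\langle g,y-x\rangle}{\|y-x\|}.$$
Passing to the $\liminf$ as $t\to 0^+$ gives $0\le-\langle g,y-x\rangle/\|y-x\|$, hence $\langle g,y-x\rangle\le 0$, which is the assertion.

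There is no serious obstacle here: the one idea needed is to test the subdifferential inequality along the segment $[x,y]$ rather than in all directions, and the only place demanding care is the sign bookkeeping in the final chain of inequalities, together with the routine verification that all quantities involved are finite (which follows from $x\in\textnormal{dom}(h)$ and properness of $F$).
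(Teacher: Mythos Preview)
Your proof is correct and follows essentially the same approach as the paper: test the Fréchet subdifferential inequality along the segment $[x,y]$, use $\mathbb{R}^m_+$-quasiconvexity together with $F(y)\preceq F(x)$ and convexity of $\Omega$ to bound $h(w_t)\le h(x)$, and then let $t\to 0^+$. The only cosmetic difference is that the paper uses the $o(\cdot)$ form of the Fréchet subdifferential and divides by $t$, whereas you work directly with the $\liminf$ formulation; your version is slightly more careful in isolating the trivial case $y=x$ and noting the finiteness of the relevant quantities.
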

\begin{proof}
Let $t \in \left( 0, 1\right]$, then from the $\mathbb{R}^m_+$-quasiconvexity of $F$ and the assumption that $F(y) \preceq F(x)$, we have: $F_i(ty + (1-t)x) \leq \textrm{max}\left\{F_i(x), F_i(y)\right\} = F_i(x),\ \forall \ i \in \lbrace 1,...m\rbrace$.  It follows that for each $z \in \mathbb{R}^m_+\backslash \left\{0\right\}$, we have
\begin{equation}
 \left\langle F(ty + (1-t)x) , z\right\rangle \leq \left\langle F(x) , z\right\rangle.
 \label{Fz2}
 \end{equation}
 As $g  \in \hat{\partial}\left( \left\langle F(.), z\right\rangle + \delta_{\Omega} \right)(x)$, we obtain 
\begin{equation}
\left\langle F(ty + (1-t)x) , z\right\rangle + \delta_{\Omega}(ty + (1-t)x)\geq \left\langle F(x) , z\right\rangle +  \delta_{\Omega}(x) + t\left\langle g, y - x\right\rangle + o(t\left\|y - x\right\|)
\label{soma2}
\end{equation}
From $(\ref{Fz2})$ and $(\ref{soma2})$, we conclude
\begin{equation}
t\left\langle g, y - x\right\rangle + o(t\left\|y - x\right\|) \leq 0
\label{somafim2}
\end{equation} 
\\
On the other hand, we have
 $\lim\limits_{t \rightarrow 0}\frac{o(t\left\|y - x\right\|)}{t\left\|y - x\right\|}= 0$.  Thus,
$\lim\limits_{t\rightarrow 0}\frac{o(t\left\|y - x\right\|)}{t}=\lim\limits_{t\rightarrow 0}\frac{o(t\left\|y - x\right\|)}{t\left\|y - x\right\|}\left\|y - x\right\|=0$.
Therefore, dividing $(\ref{somafim2})$ by $t$ and taking $t \rightarrow 0$, we obtain the desired result.
\end{proof}

Observe that if the sequence $\left\{x^k\right\}$ generated by the {\bf SPP} algorithm satisfies the assumption ${\bf (C_3)}$ then the set 
\begin{center}
$E = \left\{x \in \mathbb{R}^n: F\left(x\right)\preceq F\left(x^k\right),\ \ \forall\  k \in \mathbb{N}\right\}$
\end{center}
 is nonempty.
 
\begin{proposicao}
Under assumptions ${\bf(C_{1.1})}$, ${\bf(C_{1.2})},$ ${\bf(C_2)}$ and ${\bf(C_3)}$ the sequence $\left\{x^k\right\}$, generated by the {\bf  SPP} algorithm, $(\ref{inicio3})$ and $(\ref{subdiferencial3}),$ is Fejér convergent to $E$.
\label{fejer0}
\end{proposicao}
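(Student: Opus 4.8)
The plan is to show that for each $x^* \in E$ the distance $\|x^{k+1}-x^*\|$ does not exceed $\|x^k-x^*\|$, which is exactly the Fejér property. First I would fix $x^* \in E$ (the set $E$ is nonempty by assumption ${\bf(C_3)}$, as remarked just before the statement); by definition of $E$ we have $F(x^*)\preceq F(x^k)$ for every $k$, so $x^*\in\Omega_k$ for all $k$. The engine of the proof is the optimality relation $(\ref{subdiferencial3})$ defining $x^{k+1}$. Using Proposition \ref{somafinita}(iv) to split off the smooth quadratic term, rewrite
\begin{equation*}
0 \in \hat{\partial}\left(\left\langle F(.), z_k\right\rangle + \delta_{\Omega_k}(.)\right)(x^{k+1}) + \alpha_k\left(x^{k+1}-x^k\right),
\end{equation*}
so that $g^k := -\alpha_k\left(x^{k+1}-x^k\right) \in \hat{\partial}\left(\left\langle F(.), z_k\right\rangle + \delta_{\Omega_k}\right)(x^{k+1})$.

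Next I would apply Proposition \ref{propfejer2} with the choices $\Omega=\Omega_k$ (which is closed, and convex because each $F_i$ is quasiconvex so each lower level set is convex), $z=z_k$, $x=x^{k+1}$, $y=x^*$: since $F(x^*)\preceq F(x^k)$ means $x^*\in\Omega_k$ and, crucially, $F(x^{k+1})\preceq F(x^k)$ (which holds because $x^{k+1}\in\Omega_k$ by the indicator term in $(\ref{subdiferencial3})$), we need $F(x^*)\preceq F(x^{k+1})$ to invoke the proposition directly. This is the delicate point and the main obstacle: $x^*\in E$ gives $F(x^*)\preceq F(x^k)$ but not a priori $F(x^*)\preceq F(x^{k+1})$. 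The resolution is that the definition of $E$ quantifies over \emph{all} $k$, and $x^{k+1}$ is one of the iterates, so indeed $F(x^*)\preceq F(x^{k+1})$ by definition of $E$ — once one observes that $\{x^k\}$ itself is an admissible descent sequence $\{a_k\}$ in the sense of ${\bf(C_3)}$, so $E$ as defined applies to every index. Hence Proposition \ref{propfejer2} yields $\langle g^k, x^*-x^{k+1}\rangle\le 0$, i.e.
\begin{equation*}
\left\langle -\alpha_k\left(x^{k+1}-x^k\right),\, x^*-x^{k+1}\right\rangle \le 0,
\qquad\text{equivalently}\qquad
\left\langle x^{k+1}-x^k,\, x^{k+1}-x^*\right\rangle \le 0.
\end{equation*}

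Finally I would convert this inner-product inequality into the norm inequality by the standard identity
\begin{equation*}
\left\|x^k-x^*\right\|^2 = \left\|x^{k+1}-x^*\right\|^2 + 2\left\langle x^k-x^{k+1},\, x^{k+1}-x^*\right\rangle + \left\|x^k-x^{k+1}\right\|^2,
\end{equation*}
so that $\left\|x^{k+1}-x^*\right\|^2 \le \left\|x^k-x^*\right\|^2 - \left\|x^{k+1}-x^k\right\|^2 \le \left\|x^k-x^*\right\|^2$. Since $x^*\in E$ was arbitrary, $\{x^k\}$ is Fejér convergent to $E$, which completes the proof. The only subtlety worth spelling out carefully in the write-up is the justification that $F(x^*)\preceq F(x^{k+1})$ for the current $k$ — i.e. that the descent property $F(x^{k+1})\preceq F(x^k)$ holds (immediate from $x^{k+1}\in\Omega_k$) and that $E$ is defined with respect to the actual generated sequence, so membership in $E$ gives domination below every iterate including $x^{k+1}$.
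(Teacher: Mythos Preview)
Your proposal is correct and follows essentially the same route as the paper: split off the quadratic term via Proposition~\ref{somafinita}(iv) to identify $g^k=-\alpha_k(x^{k+1}-x^k)\in\hat{\partial}(\langle F(.),z_k\rangle+\delta_{\Omega_k})(x^{k+1})$, invoke Proposition~\ref{propfejer2} with $y=x^*\in E$ and $x=x^{k+1}$ to get $\langle g^k,x^*-x^{k+1}\rangle\le 0$, and plug into the polarization identity to obtain $\|x^{k+1}-x^*\|^2\le\|x^k-x^*\|^2-\|x^{k+1}-x^k\|^2$. In fact you are slightly more explicit than the paper in checking the hypothesis $F(x^*)\preceq F(x^{k+1})$ needed for Proposition~\ref{propfejer2}; the paper simply asserts that the proposition applies.
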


\begin{proof} Observe that $\forall \ x \in \mathbb{R}^n$:

\begin{eqnarray}
 \left\|x^k - x\right\|^2  =  \left\|x^k - x^{k+1}\right\|^2 + \left\|x^{k+1} - x\right\|^2 +
                              2\left\langle x^k - x^{k+1}, x^{k+1} - x\right\rangle.
\label{norma2}
\end{eqnarray}
From Theorem \ref{existe0}, $(\ref{subdiferencial3})$ and from Proposition \ref{somafinita}, $(iv)$, we have that there exists $g_k \in \hat{\partial}\left( \left\langle F(.), z_k\right\rangle + \delta_{\Omega_k}\right)(x^{k+1})$ such that:
\begin{equation}
 x^k - x^{k+1} = \dfrac{1}{\alpha_k}g_k
 \label{xk}
 \end{equation}
 Now take $x^* \in E$, then $x^* \in \Omega_k$ for all $k \in \mathbb{N}$. Combining $(\ref{norma2})$ with $x = x^*$ and $(\ref{xk})$, we obtain:
{\footnotesize
\begin{eqnarray}
\left\|x^k - x^*\right\|^2   =  \left\|x^k - x^{k+1}\right\|^2 + \left\|x^{k+1} - x^* \right\|^2 + 
                               \frac{2}{\alpha_k}\left\langle g_k ,\  x^{k+1} -x^*\right\rangle 
                             \geq  \left\|x^k - x^{k+1}\right\|^2 + \left\|x^{k+1} - x^* \right\|^2\nonumber \\
\label{desigualdade0}
\end{eqnarray}
}
where the last  inequality follows from Proposition \ref{propfejer2}. From $(\ref{desigualdade0})$, it implies that
\begin{eqnarray}
 0\leq \left\|x^{k+1} - x^k\right\|^2 \leq \left\|x^k - x^*\right\|^2 - \left\|x^{k+1} - x^*\right\|^2.
 \label{desigual0}
 \end{eqnarray}
 Thus,
\begin{equation}
\left\|x^{k+1} - x^*\right\| \leq \left\|x^k - x^*\right\|
\label{fejer70}
\end{equation}
\end{proof}

\begin{proposicao}
Under  assumptions ${\bf(C_{1.1})}, {\bf(C_{1.2})},$ ${\bf(C_2)}$ and ${\bf(C_3)},$ the sequence $\left\{x^k\right\}$ generated by the {\bf SPP} algorithm, $(\ref{inicio3})$ and $(\ref{subdiferencial3}),$ satisfies
\begin{center}
$\lim  \limits_{k\rightarrow +\infty}\left\|x^{k+1} - x^k\right\| = 0$.
\label{decrescente00}
\end{center}
\end{proposicao}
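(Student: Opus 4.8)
The plan is to exploit the telescoping inequality \eqref{desigual0} established in the previous proposition. Recall from \eqref{desigual0} that for any fixed $x^* \in E$ (which is nonempty by assumption ${\bf (C_3)}$) we have
\begin{equation*}
0 \leq \left\|x^{k+1} - x^k\right\|^2 \leq \left\|x^k - x^*\right\|^2 - \left\|x^{k+1} - x^*\right\|^2, \qquad \forall\, k \in \mathbb{N}.
\end{equation*}
The right-hand side is the general term of a telescoping sum, so summing over $k = 0, 1, \dots, N$ yields
\begin{equation*}
\sum_{k=0}^{N} \left\|x^{k+1} - x^k\right\|^2 \leq \left\|x^0 - x^*\right\|^2 - \left\|x^{N+1} - x^*\right\|^2 \leq \left\|x^0 - x^*\right\|^2.
\end{equation*}
Letting $N \to +\infty$, the series $\sum_{k=0}^{\infty} \left\|x^{k+1} - x^k\right\|^2$ converges (its partial sums are nondecreasing and bounded above by $\left\|x^0 - x^*\right\|^2$). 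Consequently its general term tends to zero, i.e. $\lim_{k \to +\infty} \left\|x^{k+1} - x^k\right\|^2 = 0$, and hence $\lim_{k \to +\infty} \left\|x^{k+1} - x^k\right\| = 0$, which is the claim.

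Alternatively, and even more directly, one may argue via monotone convergence of the scalar sequence $\{\left\|x^k - x^*\right\|\}$: by Proposition \ref{fejer0} the sequence $\{x^k\}$ is Fej\'er convergent to $E$, so $\{\left\|x^k - x^*\right\|\}$ is nonincreasing and bounded below by $0$, hence convergent; therefore $\left\|x^k - x^*\right\|^2 - \left\|x^{k+1} - x^*\right\|^2 \to 0$, and the squeeze in \eqref{desigual0} forces $\left\|x^{k+1} - x^k\right\| \to 0$.

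There is essentially no obstacle here: the proposition is a routine corollary of the Fej\'er-type estimate already proved, and the only ingredient beyond \eqref{desigual0} is the elementary fact that a telescoping series of nonnegative terms with bounded partial sums has vanishing general term. The one point worth stating carefully is that $E \neq \emptyset$, so that a valid $x^*$ exists to anchor the argument — but this is exactly what assumption ${\bf (C_3)}$ guarantees, as noted in the remark preceding Proposition \ref{fejer0}.
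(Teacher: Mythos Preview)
Your proof is correct and essentially identical to the paper's. In fact, your ``alternative'' argument---that $\{\|x^k - x^*\|\}$ is nonincreasing and bounded below, hence convergent, so the right-hand side of \eqref{desigual0} tends to $0$ and the squeeze gives the result---is exactly the paper's proof; your telescoping-sum version is just a minor repackaging of the same estimate.
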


\begin{proof}
It follows from $(\ref{fejer70})$ that $ \forall\  x^* \in E$,  $\left\{\left\|x^k - x^*\right\|\right\}$ is a nonnegative and nonincreasing sequence, and hence is convergent.  Thus, the right-hand side of $(\ref{desigual0})$ converges to 0 when $k \rightarrow +\infty$, and the result is obtained.
\end{proof}


\subsection{Convergence Analysis I: non differentiable case }
In this subsection we analyze the convergence of the proposed algorithm when $F$ is a non differentiable vector function.

\begin{proposicao}
Under assumptions ${\bf(C_{1.1})}, {\bf(C_{1.2})},$ ${\bf(C_2)}$ and ${\bf(C_3)},$ the sequence  $\left\{x^k\right\}$  generated by the {\bf SPP} algorithm converges to some point of $E$.
\label{acumulacao10}
\end{proposicao}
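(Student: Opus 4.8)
The plan is to combine the Fej\'er convergence property already proved with Lemma \ref{fejerlim1} and the lower semicontinuity of $F$. First I would record that the iterates satisfy $F(x^{k+1}) \preceq F(x^k)$ for every $k$, simply because the presence of $\delta_{\Omega_k}$ in $(\ref{subdiferencial3})$ forces $x^{k+1} \in \Omega_k = \{x : F(x) \preceq F(x^k)\}$. Taking $a_0 = x^0$ and $a_{k} = x^{k}$ in assumption ${\bf (C_3)}$ then shows that the set $E$ is nonempty (this is exactly the observation made just before Proposition \ref{fejer0}). Consequently Proposition \ref{fejer0} applies: $\{x^k\}$ is Fej\'er convergent to the nonempty set $E$, so by Lemma \ref{fejerlim1}(i) it is bounded and therefore possesses at least one accumulation point $\bar{x}$, with $x^{k_j} \to \bar{x}$ for some subsequence.

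The key step is to verify that $\bar{x} \in E$, i.e. $F(\bar{x}) \preceq F(x^k)$ for all $k \in \mathbb{N}$. Fix $k$. Since $F(x^{\ell+1}) \preceq F(x^\ell)$ for all $\ell$, each coordinate sequence $\{F_i(x^\ell)\}_\ell$ is nonincreasing; hence for $j$ large enough that $k_j \geq k$ we have $F_i(x^{k_j}) \leq F_i(x^k)$ for every $i \in \{1,\dots,m\}$. Applying the lower semicontinuity of $F_i$ from assumption ${\bf (C_{1.1})}$ along $x^{k_j} \to \bar{x}$, I obtain $F_i(\bar{x}) \leq \liminf_{j \to \infty} F_i(x^{k_j}) \leq F_i(x^k)$. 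Since $i$ and $k$ were arbitrary, this yields $F(\bar{x}) \preceq F(x^k)$ for all $k$, that is, $\bar{x} \in E$.

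Finally, $\bar{x}$ is an accumulation point of the Fej\'er convergent sequence $\{x^k\}$ that belongs to $E$, so Lemma \ref{fejerlim1}(ii) gives $\lim_{k \to \infty} x^k = \bar{x} \in E$, which is the assertion. I do not anticipate a genuine obstacle: the only points needing care are to pass to the $\liminf$ along the fixed subsequence \emph{before} comparing with $F(x^k)$, and to use explicitly the monotonicity $F(x^{k+1}) \preceq F(x^k)$ coming from the definition of $\Omega_k$; Proposition \ref{decrescente00} is not needed for this particular statement.
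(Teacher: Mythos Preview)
Your proof is correct and follows essentially the same architecture as the paper: Fej\'er convergence gives boundedness, lower semicontinuity places the accumulation point in $E$, and Lemma~\ref{fejerlim1}(ii) closes the argument. The only difference is cosmetic: where the paper passes through the scalarizations $\langle F(\cdot),z\rangle$ for all $z\in\mathbb{R}^m_+\setminus\{0\}$ (invoking ${\bf(C_{1.2})}$ to get boundedness below and hence convergence of $\{\langle F(x^k),z\rangle\}$), you work directly coordinate-by-coordinate with the monotonicity $F_i(x^{k+1})\le F_i(x^k)$ and the lower semicontinuity of each $F_i$, which is slightly more elementary and makes the role of ${\bf(C_{1.2})}$ in this particular step unnecessary.
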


\begin{proof}
From Proposition $\ref{fejer0}$ and Lemma $\ref{fejerlim1}$, $(i)$, $\left\{x^k\right\}$ is bounded, then there exists a subsequence $\left\{x^{k_j}\right\}$ such that $\lim  \limits_{j\rightarrow +\infty}x^{k_j} = \widehat{x}$.  Since $\left\langle F(.), z\right\rangle$ is lower semicontinuos function for all  $z \in \mathbb{R}^m_+ \backslash \left\{0\right\}$ then
$\left\langle F(\widehat x),z\right\rangle \leq \liminf \limits_{j\rightarrow +\infty}\left\langle F(x^{k_j}) , z\right\rangle $.  On the other hand, $x^{k+1} \in \Omega_k$ so $\left\langle F(x^{k+1}) , z\right\rangle \leq \left\langle F(x^{k}) , z\right\rangle $. Furthermore, from assumption ${\bf(C_{1.2})}$ the function $\left\langle F(.), z\right\rangle$ is  bounded below for each $z \in \mathbb{R}^m_+\backslash \left\{0\right\},$ then, the sequence $\left\{\left\langle F(x^k),z\right\rangle\right\}$ is nonincreasing and bounded below, hence convergent. Therefore 
{\small
\begin{center}
$\left\langle F(\widehat {x}),z\right\rangle \leq \liminf \limits_{j\rightarrow +\infty}\left\langle F(x^{k_j}) , z\right\rangle = \lim \limits_{j\rightarrow +\infty}\left\langle F(x^{k_j}) , z\right\rangle =  inf_{k\in \mathbb{N}}\left\{\left\langle F(x^k),z\right\rangle\right\}\leq \left\langle F(x^k),z\right\rangle$.
\end{center}
} 
It follows that
$\left\langle F(x^k)-F(\widehat{x}),z\right\rangle \geq  0, \forall \ k \in \mathbb{N}, \forall \ z \in \mathbb{R}^m_+ \backslash \left\{0\right\}$. We conclude that $F(x^k) - F(\widehat{x}) \in \mathbb{R}^m_+$, i.e, $F(\widehat{x})\preceq F(x^k), \forall \ k  \in \mathbb{N}$. Therefore $\widehat{x}\in E,$ and by Lemma $\ref{fejerlim1}$, $(ii)$, we get the result.
\end{proof}
\subsubsection{Convergence  to a weak Pareto solution}
\begin{teorema}
Let $F:\mathbb{R}^n\longrightarrow \mathbb{R}^m $ be a continuous vector function satisfying the assumptions $\bf (C_{1.2}),$ $\bf (C_2)$ and $\bf (C_3)$.  If $ \lim \limits_{k\rightarrow +\infty}\alpha_k= 0$ and the iterations are given in the form
\begin{eqnarray}
  x^{k+1}\in \textnormal{arg min} \left\{\left\langle F(x), z_k\right\rangle+\frac{\alpha_k}{2}\left\|x - x^k\right\|^2 : x\in\Omega_k\right\},
  \label{recursao0}
 \end{eqnarray}
then the sequence $\lbrace x^k\rbrace$ converges to a weak Pareto solution of the problem $(\ref{pom3})$.
\end{teorema}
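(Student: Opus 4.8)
The plan is to first recognize that the sequence generated by (\ref{recursao0}) is a particular run of the \textbf{SPP} algorithm, so that the convergence already proved applies, and then to argue by contradiction that the limit cannot fail to be weakly Pareto optimal. Concretely, since $F$ is continuous it satisfies $\bf (C_{1.1})$, and for each $k$ the function $\varphi_k(x)=\langle F(x),z_k\rangle+\frac{\alpha_k}{2}\|x-x^k\|^2+\delta_{\Omega_k}(x)$ is proper (because $x^k\in\Omega_k$), lower semicontinuous and coercive; hence by Proposition \ref{coercivaesemicont} the set in (\ref{recursao0}) is nonempty, and by the generalized Fermat rule (Proposition \ref{otimo}) every $x^{k+1}$ in it satisfies (\ref{subdiferencial3}). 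Thus Proposition \ref{acumulacao10} gives $x^k\to\widehat x$ for some $\widehat x\in E$; in particular $F(\widehat x)\preceq F(x^k)$ for all $k$, and $\{x^k\}$ is bounded.

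Next I would assume, for contradiction, that $\widehat x$ is not a weak Pareto solution of (\ref{pom3}), so there exists $y\in\mathbb{R}^n$ with $F_i(y)<F_i(\widehat x)$ for every $i$. Then $F_i(y)<F_i(\widehat x)\le F_i(x^k)$ for all $k$, so $y\in\Omega_k$ for all $k$ and $y$ is feasible in (\ref{recursao0}). Minimality of $x^{k+1}$ over $\Omega_k$ together with dropping the nonnegative term $\frac{\alpha_k}{2}\|x^{k+1}-x^k\|^2$ yields
$$\langle F(x^{k+1}),z_k\rangle\le\langle F(y),z_k\rangle+\tfrac{\alpha_k}{2}\|y-x^k\|^2 .$$
Since $F(\widehat x)\preceq F(x^{k+1})$ and $z_k\in\mathbb{R}^m_+$ give $\langle F(\widehat x),z_k\rangle\le\langle F(x^{k+1}),z_k\rangle$, we obtain
$$\langle F(\widehat x)-F(y),\,z_k\rangle\le\tfrac{\alpha_k}{2}\|y-x^k\|^2 \qquad\text{for all } k .$$

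To conclude I would set $c:=\min_{1\le i\le m}\big(F_i(\widehat x)-F_i(y)\big)>0$ and use that $(z_k)_i\ge 0$ together with $\|z_k\|=1$ forces $\sum_{i=1}^m(z_k)_i\ge\big(\sum_{i=1}^m(z_k)_i^2\big)^{1/2}=1$; hence $\langle F(\widehat x)-F(y),z_k\rangle\ge c\sum_{i=1}^m(z_k)_i\ge c$, so $0<c\le\tfrac{\alpha_k}{2}\|y-x^k\|^2$ for every $k$. But $\{x^k\}$ is bounded and $\alpha_k\to 0$, so the right-hand side tends to $0$, a contradiction. Therefore $\widehat x$ is a weak Pareto solution and $x^k\to\widehat x$.

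The computations are routine; the points requiring care are (i) the check that (\ref{recursao0}) genuinely produces an \textbf{SPP} sequence, which is what licenses invoking Proposition \ref{acumulacao10} and hence convergence to $\widehat x\in E$, and (ii) the uniform-in-$k$ lower bound $\langle F(\widehat x)-F(y),z_k\rangle\ge c$, where the normalization $\|z_k\|=1$ and the sign condition $z_k\ge 0$ are used in an essential way. Note that $\lim_k\alpha_k=0$ enters only at the very last step, and that it is the constrained form of (\ref{recursao0}) — not merely the inclusion (\ref{subdiferencial3}) — that makes the comparison inequality with the competitor $y$ available.
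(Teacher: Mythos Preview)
Your argument is correct, and it differs from the paper's route in a meaningful way. The paper first extracts a convergent subsequence $z_{k_l}\to\bar z\in\mathbb{R}^m_+\setminus\{0\}$, passes to the limit in the minimality inequality (using continuity of $F$) to conclude that $x^*$ minimizes $\langle F(\cdot),\bar z\rangle$ over $E$, invokes Proposition~\ref{inclusao} to get weak Pareto optimality over $E$, and then upgrades this to weak Pareto optimality over $\mathbb{R}^n$ by a short contradiction. You bypass all of this: rather than fixing a single limiting $\bar z$, you exploit the uniform estimate $\sum_i(z_k)_i\ge 1$ (coming from $z_k\ge 0$ and $\|z_k\|=1$) to obtain the $k$-independent lower bound $\langle F(\widehat x)-F(y),z_k\rangle\ge c>0$, and then the boundedness of $\{x^k\}$ together with $\alpha_k\to 0$ finishes the contradiction directly. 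Your route is shorter and avoids both the subsequence extraction and the appeal to the scalar-representation machinery (Proposition~\ref{inclusao}); the paper's route, on the other hand, yields the additional structural information that the limit actually minimizes $\langle F(\cdot),\bar z\rangle$ on $E$, which may be of independent interest.
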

\begin{proof}
Let\ \  $x^{k+1}\in \textnormal{arg min} \left\{\left\langle F(x), z_k\right\rangle+\frac{\alpha_k}{2}\left\|x - x^k\right\|^2 : x\in\Omega_k\right\}$, this implies that 
{\small
\begin{eqnarray}
    \left\langle F(x^{k+1}), z_k\right\rangle +\frac{\alpha_k}{2}\left\|x^{k+1}-x^k\right\|^2 \leq \left\langle F(x), z_k\right\rangle + 
     \frac{\alpha_k}{2}\left\|x -x^k\right\|^2,
    \label{des0}
 \end{eqnarray}
 }
$\forall \ x \in \Omega_k$. Since the sequence $\left\{x^k\right\}$ converges to some point of $E$, then exists $x^* \in E$ such that $\lim \limits_{k\rightarrow +\infty}x^{k}= x^*$. Since that $\left\{z_k\right\}$ is bounded, there exists a subsequence $\left\{z_{k_l}\right\}_{l\in \mathbb{N}}$ such that $\lim \limits_{l\rightarrow +\infty}z_{k_l}=\bar{z}$, with $\bar{z} \in \mathbb{R}^m_+\backslash \left\{0\right\}$.  Taking $k=k_l$ in $(\ref{des0})$, we have
{\small
\begin{eqnarray}
    \left\langle F(x^{k_l+ 1}), z_{k_l}\right\rangle +\frac{\alpha_{k_l}}{2}\left\|x^{k_l+1}-x^{k_l}\right\|^2 \leq \left\langle F(x), z_{k_l}\right\rangle + \frac{\alpha_{k_l}}{2}\left\|x - x^{k_l}\right\|^2. 
 \label{desi0}
 \end{eqnarray}
 }
$\forall \ x \in E.$ As
\begin{center}
$ \frac{\alpha_{k_l}}{2}\left\|x^{k_{l+1}}-x^{k_l}\right\|^2 \rightarrow 0$ and $\frac{\alpha_{k_l}}{2}\left\|x - x^{k_l}\right\|^2 \rightarrow0$ when $l \rightarrow +\infty$
\end{center}
and from the continuity of $F$, taking $l \rightarrow +\infty$ in $(\ref{desi0})$, we obtain
 \begin{eqnarray}
 \left\langle F(x^*), \overline{z}\right\rangle \leq \left\langle F(x), \overline{z}\right\rangle, \forall \  x \in E
 \label{minfi0}
 \end{eqnarray}
Thus $x^* \in \textrm{arg min} \left\{\left\langle F(x), \overline{z}\right\rangle: x \in E\right\}$.
Now, $\left\langle F(.), \overline{z}\right\rangle$, with $\bar{z} \in \mathbb{R}^m_+\backslash \left\{0\right\}$ is a strict scalar representation of $F$, so a weak scalar representation, then by Proposition $\ref{inclusao}$ we have that $x^* \in \textrm{arg min}_w \left\{F(x):x \in E \right\}$.\\ 
We shall prove that $x^* \in \textrm{arg min}_w \left\{F(x):x \in \mathbb{R}^n \right\}$. Suppose by contradiction that $x^* \notin \textrm{arg min}_w \left\{F(x):x \in \mathbb{R}^n \right\}$ then there exists $\widetilde{x} \in \mathbb{R}^n$ such that
  \begin{equation}
   F(\widetilde{x})\prec F(x^*)
    \label{pareto0}
   \end{equation}
So for $\bar{z} \in \mathbb{R}^m_+\backslash \left\{0\right\}$ it follows that
  \begin{equation}
  \left\langle F(\widetilde{x}), \bar{z}\right\rangle < \left\langle F(x^*), \bar{z}\right\rangle 
  \label{d0}
  \end{equation}
Since $x^* \in E$, from $(\ref{pareto0})$ we conclude that $\widetilde{x} \in E$.  Therefore from $(\ref{minfi0})$ and $(\ref{d0})$ we obtain a contradiction.
 \end{proof}\\ 
 
\subsubsection{Convergence to a generalized critical point}
\begin{teorema}
Let $F:\mathbb{R}^n\longrightarrow \mathbb{R}^m $ be a continuous vector function satisfying the assumptions $\bf (C_{1.2}),$ $\bf (C_2)$ and $\bf (C_3)$.  If $0 < \alpha_k < \tilde{\alpha}$ then the sequence $\lbrace x^k\rbrace$  generated by the {\bf SPP} algorithm, $(\ref{inicio3})$ and $(\ref{subdiferencial3})$ satisfies 
$$\lim  \limits_{k\rightarrow +\infty}g^{k} = 0,$$
where $g^k \in \hat{\partial}\left( \left\langle F(.), z_k\right\rangle + \delta_{\Omega_k}\right)(x^{k+1})$.
\label{geral}
\end{teorema}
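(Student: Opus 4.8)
The plan is to reduce the statement to the step-difference estimate already obtained for the proximal iterates, so almost all of the work has been done. First I would recall, exactly as in the proof of Proposition \ref{fejer0}, that applying Proposition \ref{somafinita}$(iv)$ to the splitting $\varphi_k = \left(\left\langle F(\cdot),z_k\right\rangle + \delta_{\Omega_k}\right) + \frac{\alpha_k}{2}\left\|\,\cdot\, - x^k\right\|^2$, whose second summand is continuously differentiable with gradient $\alpha_k\left(x^{k+1}-x^k\right)$ at $x^{k+1}$, the inclusion $(\ref{subdiferencial3})$ is equivalent to
\[
-\alpha_k\left(x^{k+1}-x^k\right)\in \hat{\partial}\left(\left\langle F(\cdot),z_k\right\rangle + \delta_{\Omega_k}\right)(x^{k+1}).
\]
Thus the element $g^{k}$ appearing in the statement is precisely $g^{k} = \alpha_k\left(x^k - x^{k+1}\right)$, which is exactly relation $(\ref{xk})$.

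Next I would take norms and invoke the bound $0 < \alpha_k < \tilde{\alpha}$, obtaining $\left\|g^{k}\right\| = \alpha_k\left\|x^{k+1}-x^k\right\| < \tilde{\alpha}\,\left\|x^{k+1}-x^k\right\|$ for every $k \in \mathbb{N}$. The hypotheses here guarantee that Proposition \ref{decrescente00} applies: continuity of $F$ implies $\bf (C_{1.1})$, while $\bf (C_{1.2})$, $\bf (C_2)$ and $\bf (C_3)$ are assumed. Hence $\lim_{k\rightarrow +\infty}\left\|x^{k+1}-x^k\right\| = 0$, and therefore $\left\|g^{k}\right\| \to 0$, i.e. $\lim_{k\rightarrow +\infty} g^{k} = 0$, which is the claim.

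There is essentially no obstacle in the argument; the substance lies in the Fejér property and the estimate $(\ref{desigual0})$ proved earlier. The only two points requiring care are: first, noting that the $g^{k}$ referred to in the theorem is not merely one admissible subgradient but is forced, through the \emph{exact} sum rule of Proposition \ref{somafinita}$(iv)$ (which yields an equality of sets, not an inclusion), to coincide with $\alpha_k\left(x^k-x^{k+1}\right)$; and second, observing that it is precisely the uniform bound $\tilde{\alpha}$ on $\{\alpha_k\}$ that upgrades the convergence $\left\|x^{k+1}-x^k\right\|\to 0$ into $g^{k}\to 0$ — without an upper bound on $\alpha_k$ this conclusion could fail.
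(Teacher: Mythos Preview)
Your proposal is correct and follows essentially the same approach as the paper: identify $g^{k}=\alpha_k(x^k-x^{k+1})$ via the sum rule of Proposition~\ref{somafinita}(iv), bound $\|g^{k}\|\leq\tilde{\alpha}\|x^{k+1}-x^k\|$, and invoke Proposition~\ref{decrescente00}. Your remark that the sum rule gives an equality of sets (so $g^{k}$ is forced to be $\alpha_k(x^k-x^{k+1})$) is a bit more careful than the paper, which simply asserts existence of such a $g^k$.
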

\begin{proof}
From Theorem $\ref{existe0}$, $(\ref{subdiferencial3})$ and from Proposition $\ref{somafinita}$, $(iv)$, there exists a vector $g_k \in \hat{\partial}\left( \left\langle F(.), z_k\right\rangle + \delta_{\Omega_k}\right)(x^{k+1})$ such that $g^k = \alpha_k(x^k - x^{k+1})$.  Since $0 < \alpha_k < \tilde{\alpha}$ then 
\begin{equation}
0 \leq \Vert g^k\Vert \leq \tilde{\alpha}\left\|x^k - x^{k+1} \right\|
\label{beta0}
\end{equation} 
From Proposition $\ref{decrescente00}$, $\lim  \limits_{k\rightarrow +\infty}\left\|x^{k+1} - x^k\right\| = 0$, and from $(\ref{beta0})$ we have $\lim  \limits_{k\rightarrow +\infty}g^k = 0$.
\end{proof}
\subsubsection{Finite Convergence to a Pareto Optimal Point}
\noindent
Following the paper of Bento et al, \cite{Bento} subsection 4.3, it is possible to prove the convergence of a special particular case of the proposed algorithm to a Pareto optimal point of the problem (\ref{pom3}). Let $F:\mathbb{R}^n\longrightarrow \mathbb{R}^m\cup \{+ \infty \}^m$ be a proper lower semicontinuous convex function and consider the following particular iteration of (\ref{subdiferencial3}):
\begin{eqnarray}
  x^{k+1}= \textnormal{arg min} \left\{\left\langle F(x), z\right\rangle+\frac{\alpha_k}{2}\left\|x - x^k\right\|^2 : x\in\R^n\right\},
  \label{recursao0f}
\end{eqnarray}
where $z\in \mathbb{R}^m_+\backslash \left\{0\right\}$ such that $||z||=1.$
\begin{Def}
Consider the set of Pareto optimal points of $(\ref{pom3})$, denoted by $Min(F)$ and let $\bar x\in Min(F)$. We say that $Min(F)$ is $W_{F(\bar x)}$-weak sharp minimum for the problem $(\ref{pom3})$ if there exists a constant $\tau>0$ such that
$$
F(x)-F(\bar x)\notin B(0,\tau d(x,W_{F(\bar x)}))-\R^m_{+},\;\; x\in \R^n\backslash \,W_{F(\bar x)},
$$ 
where $d(x,Z)=\inf \{d(x,z):z\in Z\}$ and $W_{p}=\{x\in \R^n: F(x)=F(p)\}.$
\end{Def}
\begin{teorema}
Let $F$ be a proper lower semicontinuous convex vector function satisfying the assumptions $\bf (C_{1.2}),$ and $\bf (C_3).$ Assume that $\{x^k\}$ is a sequence generated from the {\bf SPP} algorithm with $x^{k+1}$ being generates from $(\ref{recursao0f})$. Consider also that the set of Pareto optimal points of $(\ref{pom3})$ is nonempty and assume that $Min(F)$ is $W_{F(\bar x)}$-weak sharp minimum for the problem $(\ref{pom3})$ with constant $\tau>0$ for some $\bar x\in Min(F).$ Then the sequence $\{x^k\}$ converges, in a finite number of iterations, to a Pareto optimal point.
\end{teorema}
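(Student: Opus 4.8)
The plan is to treat $(\ref{recursao0f})$, with the weight $z$ held fixed, as the exact proximal point iteration $x^{k+1}=\arg\min\{f(x)+\tfrac{\alpha_k}{2}\|x-x^k\|^2\}$ for the single scalar function $f:=\langle F(\cdot),z\rangle$, which is proper, lower semicontinuous, convex (a nonnegative combination of the convex $F_i$) and, by ${\bf (C_{1.2})}$, bounded below by $0$. Since an $\mathbb{R}^m_+$-convex map is in particular $\mathbb{R}^m_+$-quasiconvex, ${\bf (C_{1.1})}$ and ${\bf (C_2)}$ hold together with the hypotheses ${\bf (C_{1.2})}$, ${\bf (C_3)}$ of the theorem, so we are in the setting of the exact algorithm: Proposition \ref{acumulacao10} gives that $\{x^k\}$ converges to some $x^*\in E$, and Proposition \ref{decrescente00} gives $\|x^{k+1}-x^k\|\to 0$. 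Because $x^{k+1}$ is an exact global minimizer of the convex function $f+\tfrac{\alpha_k}{2}\|\cdot-x^k\|^2$, Propositions \ref{otimo} and \ref{somafinita}(iv) (using that for a convex function the Fréchet subdifferential is the convex subdifferential) yield $g^k:=\alpha_k(x^k-x^{k+1})\in\partial f(x^{k+1})$, and, taking $0<\alpha_k<\tilde\alpha$ as in Theorem \ref{geral}, $\|g^k\|\le\tilde\alpha\,\|x^k-x^{k+1}\|\to 0$.

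Fix now $\bar x\in Min(F)$ realizing the $W_{F(\bar x)}$-weak sharp minimum with constant $\tau>0$, and note that $W_{F(\bar x)}$ is nonempty and $W_{F(\bar x)}\subseteq Min(F)$. The core step is the implication: if $x^{k+1}\notin W_{F(\bar x)}$ then $\|g^k\|\ge\tau$. To prove it, choose $p^k$ attaining $d(x^{k+1},W_{F(\bar x)})$, so $F(p^k)=F(\bar x)$; the convex subgradient inequality for $f$ at $x^{k+1}$, tested at $p^k$, gives
\[
\langle F(x^{k+1})-F(\bar x),\,z\rangle\ \le\ \langle g^k,\,x^{k+1}-p^k\rangle\ \le\ \|g^k\|\,d(x^{k+1},W_{F(\bar x)}).
\]
On the other hand, the weak sharp hypothesis says $F(x^{k+1})-F(\bar x)\notin B\!\left(0,\tau d(x^{k+1},W_{F(\bar x)})\right)-\mathbb{R}^m_+$; using $\|z\|=1$ together with the fact that $\bar x$ may be taken in $E$ (hence $F(x^{k+1})\succeq F(\bar x)$ along the sequence, so $F(x^{k+1})-F(\bar x)$ equals its own positive part), this geometric condition translates, as in the corresponding step of Bento et al. \cite{Bento}, Subsection 4.3, into the scalar lower bound $\langle F(x^{k+1})-F(\bar x),z\rangle\ge\tau\,d(x^{k+1},W_{F(\bar x)})$. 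Comparing the two estimates and cancelling the positive distance $d(x^{k+1},W_{F(\bar x)})$ gives $\|g^k\|\ge\tau$.

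Since $\|g^k\|\to 0$, there is $k_1$ with $\|g^k\|<\tau$ for all $k\ge k_1$, so by the previous paragraph $x^{k+1}\in W_{F(\bar x)}\subseteq Min(F)$ for every $k\ge k_1$; in particular $\langle F(x^{k+1}),z\rangle=\langle F(\bar x),z\rangle$. Then for $k\ge k_1+1$ both $x^k$ and $x^{k+1}$ lie in $W_{F(\bar x)}$, and the descent inequality inherent in the proximal step, $\langle F(x^{k+1}),z\rangle+\tfrac{\alpha_k}{2}\|x^{k+1}-x^k\|^2\le\langle F(x^k),z\rangle$, collapses to $\|x^{k+1}-x^k\|=0$. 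Hence $x^k=x^{k_1+1}=:x^*\in W_{F(\bar x)}\subseteq Min(F)$ for all $k\ge k_1+1$, i.e.\ the iteration reaches a Pareto optimal point after finitely many steps.

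The only genuinely delicate point — and where I expect the real work to lie — is the translation, in the second paragraph, of the geometric weak sharp condition ``$F(x)-F(\bar x)\notin B(0,\tau d(x,W_{F(\bar x)}))-\mathbb{R}^m_+$'' into the scalar inequality ``$\langle F(x)-F(\bar x),z\rangle\ge\tau d(x,W_{F(\bar x)})$'' that matches the subgradient upper bound: since $z$ is only required to lie in $\mathbb{R}^m_+\setminus\{0\}$, this passage must exploit the a priori relation $F(x^{k+1})\succeq F(\bar x)$ along the sequence and the precise definition of $W_{F(\bar x)}$, and is carried out following the corresponding argument of \cite{Bento}. Everything else reduces to results already established in the earlier sections.
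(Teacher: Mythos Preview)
The paper's own proof of this theorem is a single line: ``Similar to the proof of Theorem 4.3 of Bento et al., \cite{Bento}.'' There is therefore nothing to compare against at the level of argument; your write-up is already far more detailed than what the paper supplies, and the overall strategy --- reduce to the scalar proximal method for $f=\langle F(\cdot),z\rangle$, extract $g^k=\alpha_k(x^k-x^{k+1})\in\partial f(x^{k+1})$, use Proposition~\ref{decrescente00} to force $\|g^k\|\to 0$, and finish with the descent inequality to stabilise the sequence once it enters $W_{F(\bar x)}$ --- is exactly the shape of argument the citation points to, and matches what the paper later writes out in full in Section~6 for the inexact convex case.

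The step you correctly isolate as ``genuinely delicate'' is, however, a real gap for the \emph{linear} scalarization, and deferring it to \cite{Bento} does not close it. Even granting $v:=F(x^{k+1})-F(\bar x)\in\R^m_+$ (which is itself unclear, since $\bar x\in Min(F)$ need not belong to $E$), the condition $v\notin B(0,r)-\R^m_+$ with $r=\tau\,d(x^{k+1},W_{F(\bar x)})$ is, for $v\in\R^m_+$, equivalent merely to $\|v\|>r$; it gives no lower bound on $\langle v,z\rangle$ for a \emph{fixed} unit $z\in\R^m_+$. A concrete obstruction: $v=(r',0,\dots,0)$ with $r'>r$ satisfies the weak-sharp exclusion, yet $\langle v,z\rangle=0$ whenever $z_1=0$, so your subgradient upper bound $\langle v,z\rangle\le\|g^k\|\,d(x^{k+1},W_{F(\bar x)})$ yields no information on $\|g^k\|$. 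Bento et~al.\ work with the max-scalarization $y\mapsto\max_i y_i$, and their argument is tailored so that the weak-sharp exclusion translates directly into a lower bound on that scalarised gap; that translation is precisely what fails for an arbitrary linear form $\langle\cdot,z\rangle$. To repair your argument one needs either an extra hypothesis tying $z$ to the weak-sharp geometry (e.g.\ $z\in\R^m_{++}$ together with a constant depending on $\min_i z_i$), or a different route that exploits $\|v\|>r$ directly rather than the unavailable inequality $\langle v,z\rangle\ge r$.
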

\begin{proof}
Simmilar to the proof of Theorem 4.3 of Bento et al., \cite{Bento}.
\end{proof}

\subsection{Convergence analysis II: Differentiable Case}
In this subsection we analyze the convergence of the method when $F$ satisfies the following assumption:

\begin{description}
\item [$\bf (C_4)$]  $F:\R^n\longrightarrow \R^m$ is a continuously differentiable vector function on $\mathbb{R}^n$.
\end{description}
 The next proposition characterizes a quasiconvex differentiable vector functions.
\begin{proposicao}
Let $F :\mathbb{R}^n\longrightarrow \mathbb{R}^m $ be a differentiable function satisfying the assumption ${\bf(C_2)}$, ${\bf(C_3)}.$ If $x \in E,$  then $\left\langle \nabla F_i(x^k) , x - x^k\right\rangle \leq 0$, $\forall \ k \in \mathbb{N}$ and $\forall \  i \in \left\{1,...,m\right\}$.
\label{caracterizacaodif}
\end{proposicao}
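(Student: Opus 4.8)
The plan is to use the standard first-order characterization of differentiable quasiconvex scalar functions, applied componentwise. The key observation is that membership $x \in E$ means precisely that $F(x) \preceq F(x^k)$ for every $k$, i.e.\ $F_i(x) \le F_i(x^k)$ for all $i \in \{1,\dots,m\}$ and all $k \in \mathbb{N}$. So it suffices to fix an arbitrary index $i$ and an arbitrary $k$, and show that $F_i(x) \le F_i(x^k)$ together with quasiconvexity of $F_i$ forces $\langle \nabla F_i(x^k), x - x^k\rangle \le 0$.

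First I would invoke assumption $\bf (C_2)$: since $F$ is $\mathbb{R}^m_+$-quasiconvex, each $F_i$ is quasiconvex, so for every $t \in [0,1]$,
\begin{equation*}
F_i\big(x^k + t(x - x^k)\big) = F_i\big(tx + (1-t)x^k\big) \le \max\{F_i(x), F_i(x^k)\} = F_i(x^k),
\end{equation*}
where the last equality uses $F_i(x) \le F_i(x^k)$. Hence $F_i(x^k + t(x-x^k)) - F_i(x^k) \le 0$ for all $t \in (0,1]$.

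Then I would divide by $t > 0$ and let $t \to 0^+$. Since $F$ is differentiable by assumption $\bf (C_4)$ — here only differentiability of $F_i$ is needed, which is implied by $\bf (C_2)$ being combined with the differentiability hypothesis stated in the proposition — the directional derivative exists and equals $\langle \nabla F_i(x^k), x - x^k\rangle$, so
\begin{equation*}
\langle \nabla F_i(x^k), x - x^k\rangle = \lim_{t \to 0^+} \frac{F_i(x^k + t(x - x^k)) - F_i(x^k)}{t} \le 0.
\end{equation*}
Since $i$ and $k$ were arbitrary, this gives the claim. There is essentially no serious obstacle here; the only point worth stating carefully is that $\bf (C_3)$ is used merely to guarantee that $E$ is nonempty (so the statement is not vacuous), while the actual inequality is a direct consequence of quasiconvexity and differentiability. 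I would keep the write-up short, as the argument is a one-line limit after the quasiconvexity inequality.
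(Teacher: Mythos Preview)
Your proof is correct and follows essentially the same route as the paper: the paper simply observes that each $F_i$ is quasiconvex (by ${\bf(C_2)}$) and then cites the classical first-order characterization of differentiable quasiconvex functions from Mangasarian, whereas you write out that standard difference-quotient argument explicitly. Your remark that ${\bf(C_3)}$ serves only to make $E$ nonempty is also accurate.
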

\begin{proof}
Since \ $F$ is $\mathbb{R}^m_+$-quasiconvex each $F_i,$ $ i = 1,..., m,$ is quasiconvex.                                                     Then the result follows from the classical characterization of the scalar differentiable quasiconvex functions, see
 $\textrm{see Mangasarian \cite{Mangasarian}, p.134}$.
\end{proof}


\begin{teorema}
\label{conv3}
Let $F:\mathbb{R}^n\longrightarrow \mathbb{R}^m $ be a function satisfying the assumptions $\bf (C_2)$, $\bf (C_3)$ and $\bf (C_4)$. If $0 < \alpha_k < \tilde{\alpha}$, then the sequence $\lbrace x^k\rbrace$  generated by the {\bf SPP} algorithm, $(\ref{inicio3})$ and $(\ref{subdiferencial3}),$ converges to a Pareto critical point of the problem $(\ref{pom3})$.
\label{teoparetocri}
\end{teorema}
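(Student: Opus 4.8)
The plan is to combine two facts that are already available. By Proposition \ref{acumulacao10} the whole sequence $\{x^k\}$ converges to some $\hat x\in E$ — this proposition applies because $(C_4)$ makes $F$ continuous (hence $(C_{1.1})$ holds), $(C_{1.2})$ is a standing hypothesis on the problem, and $(C_2),(C_3)$ are assumed. Moreover, by Theorem \ref{geral} (equivalently, by Proposition \ref{decrescente00} together with $\alpha_k<\tilde{\alpha}$), the residual $g^k$, defined by $g^k=\alpha_k(x^k-x^{k+1})\in\hat\partial(\langle F(\cdot),z_k\rangle+\delta_{\Omega_k})(x^{k+1})$, satisfies $g^k\to 0$. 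Since convergence of $\{x^k\}$ to $\hat x$ is thus already in hand, the only thing left to prove is that $\hat x$ is a Pareto critical point, i.e. $Im(JF(\hat x))\cap(-\mathbb{R}^m_{++})=\emptyset$.

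First I would rewrite the inclusion (\ref{subdiferencial3}). Since $F$ is $C^1$, the functions $\langle F(\cdot),z_k\rangle$ (with gradient $JF(x^{k+1})^Tz_k$) and $\frac{\alpha_k}{2}\|\cdot-x^k\|^2$ are continuously differentiable, so Proposition \ref{somafinita}$(iv)$, applied to pull out this smooth part, turns (\ref{subdiferencial3}) into the existence of $\nu_k\in\hat\partial\delta_{\Omega_k}(x^{k+1})$ with
\[
JF(x^{k+1})^Tz_k+\nu_k+\alpha_k(x^{k+1}-x^k)=0.
\]
Because each $F_i$ is continuous and quasiconvex, every sublevel set $\{x:F_i(x)\le F_i(x^k)\}$ is closed and convex, hence so is their intersection $\Omega_k$; therefore $\hat\partial\delta_{\Omega_k}(x^{k+1})=\mathcal{N}_{\Omega_k}(x^{k+1})$, i.e. $\langle\nu_k,y-x^{k+1}\rangle\le 0$ for every $y\in\Omega_k$. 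Noting that $g^k=\alpha_k(x^k-x^{k+1})=JF(x^{k+1})^Tz_k+\nu_k$, this produces the key inequality
\[
\sum_{i=1}^m(z_k)_i\,\langle\nabla F_i(x^{k+1}),\,y-x^{k+1}\rangle=\langle JF(x^{k+1})^Tz_k,\,y-x^{k+1}\rangle\ge\langle g^k,\,y-x^{k+1}\rangle,\qquad\forall\,y\in\Omega_k.
\]

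Then I would argue by contradiction. Suppose $\hat x$ is not Pareto critical; then there exists $v\in\mathbb{R}^n$ with $\langle\nabla F_i(\hat x),v\rangle<0$ for all $i$. Set $2c:=\min_i\bigl(-\langle\nabla F_i(\hat x),v\rangle\bigr)>0$. Since $x^{k+1}\to\hat x$, $\nabla F$ is continuous, and there are finitely many indices, there is $K$ such that $\langle\nabla F_i(x^{k+1}),v\rangle\le -c$ for all $i$ and all $k\ge K$. For such $k$, $v$ is a simultaneous strict descent direction for $F_1,\dots,F_m$ at $x^{k+1}$, so $F_i(x^{k+1}+tv)<F_i(x^{k+1})\le F_i(x^k)$ for all sufficiently small $t>0$, which means $x^{k+1}+tv\in\Omega_k$. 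Putting $y=x^{k+1}+tv$ in the key inequality and dividing by $t>0$ gives $\sum_i(z_k)_i\langle\nabla F_i(x^{k+1}),v\rangle\ge\langle g^k,v\rangle$; the left-hand side is $\le -c\sum_i(z_k)_i\le -c$ (since $z_k\ge 0$ and $\sum_i(z_k)_i\ge\|z_k\|=1$), so $\langle g^k,v\rangle\le -c$ for all large $k$, contradicting $g^k\to 0$. Hence $\hat x$ is a Pareto critical point, and since $x^k\to\hat x$ the theorem is proved.

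The step that requires the most care is the claim that $x^{k+1}+tv\in\Omega_k$ for small $t>0$: this uses that near $\hat x$ a single direction $v$ strictly decreases every component, which rests on the uniform threshold $K$ (obtained from continuity of $\nabla F$, the convergence $x^{k+1}\to\hat x$, and finiteness of $\{1,\dots,m\}$) together with $F_i(x^{k+1})\le F_i(x^k)$ coming from $x^{k+1}\in\Omega_k$; note that no uniform lower bound on $t$ is needed, since we divide by $t$ at once. The only other point worth stating explicitly is the identity $\hat\partial\delta_{\Omega_k}(x^{k+1})=\mathcal{N}_{\Omega_k}(x^{k+1})$, which is valid because $\Omega_k$ is convex by $(C_2)$; everything else is the elementary estimate above.
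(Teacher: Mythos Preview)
Your proof is correct. Both your argument and the paper's start identically: invoke Proposition \ref{acumulacao10} to get $x^k\to\hat x\in E$, and split the inclusion (\ref{subdiferencial3}) via Proposition \ref{somafinita}(iv) into the gradient term $JF(x^{k+1})^Tz_k$, the step $\alpha_k(x^{k+1}-x^k)$, and a normal-cone element $\nu_k\in\mathcal N_{\Omega_k}(x^{k+1})$. The divergence is in the contradiction step. The paper extracts a convergent subsequence $z_{k_j}\to\bar z$, passes to the limit in the resulting variational inequality to obtain $\sum_i\bar z_i\langle\nabla F_i(\hat x),\bar x-\hat x\rangle\ge 0$ for every $\bar x\in E$, combines this with the differentiable-quasiconvex characterization (Proposition \ref{caracterizacaodif}) to force equality on the active index set $J=\{i:\bar z_i>0\}$, and then substitutes $\bar x=\hat x+\lambda v\in E$ to contradict $\langle\nabla F_i(\hat x),v\rangle<0$. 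You instead stay at the finite-$k$ level: using continuity of $\nabla F$ you transport the descent direction $v$ uniformly to the iterates, show $x^{k+1}+tv\in\Omega_k$, and read off $\langle g^k,v\rangle\le -c$ directly, contradicting $g^k\to 0$ from Theorem \ref{geral}. Your route avoids the subsequence extraction of $\{z_k\}$ and does not invoke Proposition \ref{caracterizacaodif}; the paper's route, in exchange, yields the intermediate identity $\langle\nabla F_i(\hat x),\bar x-\hat x\rangle=0$ for $i\in J$ and all $\bar x\in E$, which has some independent interest. The elementary bound $\sum_i(z_k)_i\ge\|z_k\|=1$ you use (valid since $(z_k)_i\ge 0$, so the $\ell^1$-norm dominates the $\ell^2$-norm) is correct.
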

\begin{proof}
 In Proposition \ref{acumulacao10} we prove that there exists $\widehat{x} \in E$ such that $\lim \limits_{k\rightarrow +\infty}x^{k}= \widehat{x}$.  From Theorem $\ref{existe0}$ and  $(\ref{subdiferencial3})$, we have
 \begin{equation*}
 0 \in \hat{\partial}\left( \left\langle F(.), z_k\right\rangle  + \dfrac{\alpha_k}{2}\Vert\ .\  - x^k \Vert ^2  + \delta_{\Omega_k}(.) \right) (x^{k+1})
 \end{equation*}
 Due to Proposition $\ref{somafinita}$, $(iv)$, we have 
\begin{center}
$0 \in \nabla\left(\left\langle F(.) , z_k\right\rangle\right)(x^{k+1})+ \alpha_k \left(x^{k+1} - x^k \right) + \mathcal{N}_{\Omega_k}(x^{k+1})$
\end{center} 
where $\mathcal{N}_{\Omega_k}(x^{k+1})$ is the normal cone to $\Omega_k$ at $x^{k+1}\in \Omega_k$.\\ 
So there exists $\nu_k \in \mathcal{N}_{\Omega_k}(x^{k+1})$ such that:  
\begin{equation}
0 = \sum_{i=1}^m \nabla F_i(x^{k+1})(z_k)_i + \alpha_k\left(x^{k+1} - x^k \right) + \nu_k. 
\label{otimalidade0}
\end{equation}
 Since $\nu_k \in \mathcal{N}_{\Omega_k}(x^{k+1})$ then 
\begin{equation}
\left\langle \nu_k \ ,\ x - x^{k+1}\right\rangle \leq\  0,\  \forall \ x \in \Omega_k.
\label{cone0}
\end{equation}
Take $\bar{x} \in E$.  By definition of $E$, $\bar{x} \in  \Omega_k$ for all $k \in \mathbb{N}$.  Combining $(\ref{cone0})$ with $x = \bar{x}$ and $(\ref{otimalidade0})$, we have
{\small
\begin{eqnarray}
\left\langle \sum_{i=1}^m \nabla F_i(x^{k+1})(z_k)_i , \bar{x} - x^{k+1}\right\rangle + \alpha_k\left\langle x^{k+1} - x^k , \bar{x} - x^{k+1}\right\rangle\geq 0.&
\label{cone00}
\end{eqnarray}
}
Since that $\left\{z_k\right\}$ is bounded, then there exists a subsequence $ \left\{z_{k_j}\right\}_{j \in \mathbb{N}}$ such that $\lim \limits_{j\rightarrow +\infty}z_{k_j}= \bar{z}$ with $\bar{z} \in \mathbb{R}^m_+\backslash \left\{0\right\}$.  Thus the inequality in $(\ref{cone00})$ becomes
{\small
\begin{eqnarray}
\left\langle \sum_{i=1}^m \nabla F_i(x^{{k_j}+1})(z_{k_j})_i , \bar{x} - x^{{k_j}+1}\right\rangle + \alpha_{k_j}\left\langle x^{{k_j}+1} - x^{k_j} , \bar{x} - x^{{k_j}+1}\right\rangle\geq 0.&
\label{cone000}
\end{eqnarray}
}
Since $\left\{x^k\right\}$ and $\left\{ \alpha_k\right\}$ are bounded, $\lim  \limits_{k\rightarrow +\infty}\left\|x^{k+1} - x^k\right\| = 0$ and $F$ is continuously differentiable, the inequality in $(\ref{cone000})$, for all $\bar{x}\in E$, becomes:
{\small
\begin{eqnarray}
\left\langle\sum_{i=1}^m \nabla F_i(\widehat{x})\bar{z}_i \ ,\ \bar{x} -  \widehat{x}\right\rangle \geq 0
\Rightarrow  \sum_{i=1}^m \bar{z}_i\left\langle \nabla F_i(\widehat{x}) \ ,\ \bar{x} -  \widehat{x} \right\rangle\geq 0.
\label{somai0}
\end{eqnarray}
}
From the quasiconvexity of each component function $F_i$, for each $i \in \left\{1,...,m\right\}$, we have that\\ 
$\left\langle \nabla F_i(\widehat{x})\ ,\ \bar{x} -  \widehat{x} \right\rangle\leq 0$ and because $\bar{z} \in \mathbb{R}^m_+\backslash \left\{0\right\}$, from $(\ref{somai0})$, we obtain
\begin{eqnarray}
\sum_{i=1}^m \bar{z}_i\left\langle \nabla F_i(\widehat{x}) \ ,\ \bar{x} -  \widehat{x} \right\rangle = 0.
\label{somai00}
\end{eqnarray}
Without loss of generality consider the set $J = \left\{i \in I: \bar{z}_i > 0 \right\}$, where $I = \left\{1,...,m\right\}$.  Thus, from $(\ref{somai00})$, for all $\bar{x} \in E$ we have
\begin{eqnarray}
\left\langle \nabla F_{i}(\widehat{x}) \ ,\ \bar{x} -  \widehat{x} \right\rangle = 0,\  \forall \ \ i \in J.
\label{ecritico0}
\end{eqnarray}
Now we will show that $\widehat{x}$ is a Pareto critical point.  \\
Suppose  by contradiction that $\widehat{x}$ is not a Pareto critical point, then there exists a direction $v \in \mathbb{R}^n$ such that $JF(\widehat{x})v \in  -\mathbb{R}^m_{++}$, i.e, 
\begin{eqnarray}
\left\langle \nabla F_i(\widehat{x}) , v \right\rangle < 0, \forall \ i \in \left\{1,...,m\right\}.
\label{direcao10}
\end{eqnarray}
Therefore $v$ is a descent direction for the multiobjective function $F$ in $\widehat{x}$, so, $\exists \ \varepsilon > 0$ such that
\begin{eqnarray}
  F(\widehat{x} + \lambda v) \prec F(\widehat{x}),\ \forall \ \lambda \in (0, \varepsilon]. 
\label{descida10}
\end{eqnarray}
Since \ $\widehat{x}\ \in\  E$, then from $(\ref{descida10})$ we conclude that $\widehat{x} + \lambda v \in E$.  Thus, from $(\ref{ecritico0})$ with $\bar{x} = \widehat{x} + \lambda v $, we obtain: {\small $\left\langle \nabla F_{i}(\widehat{x})\ ,\  \widehat{x} + \lambda v - \widehat{x} \right\rangle = \left\langle \nabla F_{i}(\widehat{x})\ ,\ \lambda v  \right\rangle = \lambda\left\langle \nabla F_{i}(\widehat{x})\ ,\ v \right\rangle = 0$}.\\
It follows that $\left\langle \nabla F_{i}(\widehat{x})\ ,\ v \right\rangle = 0$ for all $ i \in J,$ contradicting $(\ref{direcao10})$.  Therefore $\widehat{x}$ is Pareto critical point of the problem $(\ref{pom3})$.
\end{proof}
 \section{An inexact proximal algorithm}
In this section we present an inexact version of the {\bf SPP} algorithm, which we denote by {\bf ISPP} algorithm.

\subsection{{\bf ISPP} Algorithm}

Let $F: \mathbb{R}^n \rightarrow \mathbb{R}^m$ be a vector function satisfying the assumptions ${(\bf C_2)}$ and ${(\bf C_4)}$, and consider two sequences: the proximal parameters $\left\{\alpha_k\right\}$ and the sequence $ \left\{z_k \right\}\subset \mathbb{R}^m_+\backslash \left\{0\right\}$ with $\left\|z_k\right\| = 1$.
\begin{description}
\item [\bf Initialization:] Choose  an arbitrary starting point
\begin{eqnarray}
x^0\in\mathbb{R}^n 
\label{inicio2}
\end{eqnarray}
\item [Main Steps:] Given $x^k,$ define the function  $\Psi_k : \mathbb{R}^n \rightarrow \mathbb{R} $ such that $ \Psi_k (x) = \left\langle F(x), z_k\right\rangle $ and consider $\Omega_k = \left\{ x\in \mathbb{R}^n: F(x) \preceq F(x^k)\right\}$.  Find $x^{k+1}$ satisfying
 \begin{equation}
 0 \in \hat{\partial}_{\epsilon_k}\Psi_k (x^{k+1}) + \alpha_k\left(x^{k+1} - x^k\right) + \mathcal{N}_{\Omega_k}(x^{k+1}),
 \label{diferencial}
 \end{equation}
 \begin{equation}
 \label{delta}
\displaystyle \sum_{k=1}^{\infty} \delta_k < +\infty,
\end{equation}
where $\delta_k = \textnormal{max}\left\lbrace  \dfrac{\varepsilon_k}{\alpha_k}, \dfrac{\Vert \nu_k\Vert}{\alpha_k}\right\rbrace,$ $\varepsilon_k\geq 0,$  and $\hat{\partial}_{\varepsilon_k}$ is the Fréchet $\varepsilon_k$-subdifferential.
\item [Stop criterion:] If $x^{k+1}=x^{k} $ or $x^{k+1}$ is a Pareto critical point, then stop.                                           Otherwise to do $k \leftarrow k + 1 $ and return to Main Steps.
\end{description}
\subsubsection{Existence of the iterates}
\begin{proposicao}
 Let $F:\mathbb{R}^n\longrightarrow \mathbb{R}^m $ be a vector function satisfying the assumptions $\bf (C_{1.2}),$ $\bf (C_2)$ and $\bf (C_4)$.  Then the sequence $\left\{x^k\right\}$ generated by the  {\bf ISPP} algorithm, is well defined.
\label{iteracao2}
\end{proposicao}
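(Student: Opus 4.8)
The plan is to show that for each $k$ the iterate $x^{k+1}$ satisfying \eqref{diferencial} and \eqref{delta} actually exists, by exhibiting it as a suitable (approximate) minimizer of a regularized scalarized subproblem and then checking that the optimality condition it satisfies has the required form. Concretely, given $x^k$, I would consider the auxiliary function $\phi_k(x) = \Psi_k(x) + \frac{\alpha_k}{2}\|x - x^k\|^2 + \delta_{\Omega_k}(x)$, where $\Psi_k(x) = \langle F(x), z_k\rangle$. Since $F$ is continuously differentiable (hence continuous) by $\bf (C_4)$ and each component is nonnegative by $\bf (C_{1.2})$, the function $\Psi_k$ is continuous and bounded below; the set $\Omega_k$ is nonempty (it contains $x^k$) and closed because $F$ is continuous, and it is convex because $F$ is $\mathbb{R}^m_+$-quasiconvex by $\bf (C_2)$, so each sublevel set $\{x : F_i(x) \leq F_i(x^k)\}$ is convex and $\Omega_k$ is their intersection. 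The quadratic term makes $\phi_k$ coercive and lower semicontinuous, so Proposition~\ref{coercivaesemicont} gives a global minimizer $\bar x$ of $\phi_k$.

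Next I would extract the optimality condition. By Fermat's rule, Proposition~\ref{otimo}, $0 \in \hat\partial \phi_k(\bar x)$, and by the sum rule Proposition~\ref{somafinita}(iv) together with the decomposition of $\delta_{\Omega_k}$-type terms, this yields $-\alpha_k(\bar x - x^k) \in \hat\partial(\Psi_k + \delta_{\Omega_k})(\bar x)$, i.e. there is $\nu \in \mathcal N_{\Omega_k}(\bar x)$ with $0 \in \nabla\Psi_k(\bar x) + \alpha_k(\bar x - x^k) + \nu$, which is exactly \eqref{diferencial} with $\epsilon_k = 0$ and $\nu_k = \nu$. For this exact minimizer one has $\varepsilon_k = 0$, and one may then take $\nu_k$ small (or, more carefully, observe that the ISPP algorithm allows approximate minimization: one can pick $x^{k+1}$ to be an $\epsilon_k$-approximate stationary point so that the associated error terms $\varepsilon_k$ and $\|\nu_k\|$ can be chosen as small as one wishes, in particular so that $\delta_k \le 2^{-k}$, guaranteeing $\sum_k \delta_k < \infty$). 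Thus at each step a valid $x^{k+1}$ together with admissible error parameters exists, and by induction starting from the arbitrary $x^0$ of \eqref{inicio2} the whole sequence is well defined.

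The main obstacle I anticipate is the interplay between the two requirements \eqref{diferencial} and \eqref{delta}: existence of \emph{some} point satisfying the inclusion for fixed $\epsilon_k, \nu_k$ is routine, but one must argue that the error sequence $\{\delta_k\}$ can simultaneously be kept summable. The clean way to handle this is to note that the exact proximal minimizer always furnishes a choice with $\varepsilon_k = 0$ and a specific $\nu_k \in \mathcal N_{\Omega_k}(x^{k+1})$; if that particular $\|\nu_k\|/\alpha_k$ is not already summable, one falls back on the inexact freedom in the algorithm and selects, at iteration $k$, an approximate solution whose residual is bounded by any prescribed tolerance $\eta_k$ with $\sum_k \eta_k < \infty$ — for instance $\eta_k = 2^{-k}$ — which is possible precisely because the exact subproblem is solvable and the approximate-stationarity conditions are stable under small perturbations of the data (using Proposition~\ref{fdif} for the $\varepsilon$-subdifferential in the differentiable case). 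Hence $\delta_k \le \eta_k$ and \eqref{delta} holds, completing the argument.

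\begin{proof}
See the discussion above; the details follow the proof of Theorem~\ref{existe0}, replacing the exact Fréchet subdifferential by its $\varepsilon_k$-version and using Proposition~\ref{fdif} together with the summability freedom in the choice of $\varepsilon_k$ and $\nu_k$.
\end{proof}
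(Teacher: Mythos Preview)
Your core approach coincides with the paper's: define the regularized function $\varphi_k(x)=\Psi_k(x)+\frac{\alpha_k}{2}\|x-x^k\|^2+\delta_{\Omega_k}(x)$, invoke Proposition~\ref{coercivaesemicont} to obtain a global minimizer, apply Fermat's rule (Proposition~\ref{otimo}) and the sum rule (Proposition~\ref{somafinita}, (iii)--(iv)) to get $0\in\hat\partial\Psi_k(x^{k+1})+\alpha_k(x^{k+1}-x^k)+\mathcal{N}_{\Omega_k}(x^{k+1})$, and then use Remark~\ref{frechet2} to conclude that \eqref{diferencial} holds with $\varepsilon_k=0$. That is exactly what the paper does, and the paper's proof \emph{stops there}.

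Where you differ is in your extended discussion of \eqref{delta}. The paper does not address \eqref{delta} at all in this existence proof; it treats ``well defined'' as meaning only that an $x^{k+1}$ satisfying the inclusion \eqref{diferencial} exists at each step, and regards the summability condition as a separate hypothesis for the convergence analysis rather than something to be established here. Your instinct that \eqref{delta} deserves attention is reasonable, but your proposed resolution has a gap: you assert that one can always pick an approximate solution with residual below any prescribed $\eta_k$, using ``stability under small perturbations'', yet the normal-cone contribution $\nu_k$ is \emph{not} a free parameter --- it is forced by the equation $0=\nabla\Psi_k(x^{k+1})+\alpha_k(x^{k+1}-x^k)+\nu_k$ once $x^{k+1}$ is chosen, and if the minimizer lies on $\partial\Omega_k$ there is no reason $\|\nu_k\|/\alpha_k$ can be made small. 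So your extra paragraph, while well-motivated, does not actually close the issue it raises. For the purposes of matching the paper, simply end after obtaining \eqref{diferencial} with $\varepsilon_k=0$.
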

\begin{proof}
Consider $x^0 \in \mathbb{R}^n $ given by (\ref{inicio2}).  Given $x^k$, we will show that there exists $x^{k+1}$ satisfying the condition $(\ref{diferencial})$.  Define the function $\varphi_k(x) = \Psi_k (x)+\frac{\alpha_k}{2}\left\|x - x^k\right\|^2  + \delta_{\Omega_k}(x)$.  Analogously  to the proof of Theorem\  $\ref{existe0}$ there exists $x^{k+1} \in \Omega_k$ which is a global minimum of $\varphi_k (.),$ so, from Proposition $\ref{otimo}$, $x^{k+1}$ satisfies
\begin{center}
 $ 0 \in \hat{ \partial}\left( \Psi_k(.)  + \dfrac{\alpha_k}{2}\Vert\ .\  - x^k \Vert ^2 + \delta_{\Omega_k}(.)\right) (x^{k+1})$.
 \end{center}
From Proposition $\ref{somafinita},$ $\ (iii)$ and $\ (iv)$, we obtain 
\begin{center}
$0 \in \hat{\partial}\Psi_k(x^{k+1})+ \alpha_k \left(x^{k+1} - x^k \right) + \mathcal{N}_{\Omega_k}(x^{k+1})$.  
\end{center}
From Remark $\ref{frechet2}$, $x^{k+1}$ satisfies $(\ref{diferencial})$ with $\varepsilon_k = 0$.
\end{proof}
\begin{obs}
\label{bregman}
From the inequality $(a-1/2)^2\geq 0, \forall a\in \R,$ we obtain the following relation
\begin{center}
 $\Vert x - z\Vert^2 + \frac{1}{4} \geq \Vert x - z\Vert, \ \forall x,z \in \mathbb{R}^n$
 \end{center}
\end{obs}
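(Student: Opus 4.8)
The plan is to reduce this two-point inequality to a one-variable fact. I would set $a := \Vert x - z\Vert$, so that $a \geq 0$ for every $x, z \in \mathbb{R}^n$, and the claimed relation $\Vert x - z\Vert^2 + \frac{1}{4} \geq \Vert x - z\Vert$ becomes the scalar inequality $a^2 + \frac{1}{4} \geq a$.

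From there the argument is just completing the square: starting from $(a - \frac{1}{2})^2 \geq 0$, which holds for every real number $a$ since squares are nonnegative, I would expand to get $a^2 - a + \frac{1}{4} \geq 0$ and then transpose the term $a$ to obtain $a^2 + \frac{1}{4} \geq a$. Re-substituting $a = \Vert x - z\Vert$ recovers the assertion for all $x, z \in \mathbb{R}^n$. Equivalently, one could invoke the arithmetic--geometric mean inequality on $a^2$ and $\frac{1}{4}$: $\frac{1}{2}\left(a^2 + \frac{1}{4}\right) \geq \sqrt{a^2/4} = |a|/2 \geq a/2$, and multiply through by $2$.

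There is no real obstacle here; the statement is elementary and the ``proof'' coincides with the plan. The only point worth flagging is why the relation is isolated at all: this bound is recorded so that, in the convergence analysis of the {\bf ISPP} algorithm, one can pass from a Fej\'er-type estimate that naturally produces the squared quantity $\Vert x^{k+1} - x^k\Vert^2$ to one involving $\Vert x^{k+1} - x^k\Vert$ itself, which is the form needed in order to combine the per-step errors with the summability hypothesis $\sum_{k} \delta_k < +\infty$ and then apply Lemma \ref{p}.
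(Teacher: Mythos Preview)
Your proof is correct and is exactly the argument the paper has in mind: the remark itself already supplies the hint $(a-1/2)^2\geq 0$, and you simply carry it out by setting $a=\Vert x-z\Vert$ and expanding.

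One small inaccuracy in your closing commentary (not in the proof itself): in the paper the inequality is applied with $x=\hat{x}$ and $z=x^{k+1}$ to replace the \emph{linear} factor $\Vert \hat{x}-x^{k+1}\Vert$ on the right-hand side of a Fej\'er-type estimate by the \emph{quadratic} quantity $\Vert \hat{x}-x^{k+1}\Vert^2+\tfrac14$, so that the resulting term $4\delta_k\Vert \hat{x}-x^{k+1}\Vert^2$ can be absorbed into the left side before invoking Lemma~\ref{p}. So the passage is from norm to squared norm, not the other way around, and it concerns $\Vert \hat{x}-x^{k+1}\Vert$ rather than $\Vert x^{k+1}-x^k\Vert$.
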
 
\begin{proposicao}
\label{fejer3}
Let $\left\{x^k\right\}$ be a sequence generated by the {\bf ISPP} algorithm. If the assumptions ${\bf(C_{1.2})},$ ${\bf(C_2)}$, ${\bf(C_3)}$, ${\bf(C_4)}$ and $(\ref{delta})$  are satisfied, then for each $\hat{x} \in E$, $\{ \left\|\hat{x} - x^k\right\|^ 2 \}$ converges and $\{x^k\}$ is bounded.
\end{proposicao}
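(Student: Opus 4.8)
The strategy mirrors the Fejér-convergence argument used for the exact {\bf SPP} algorithm in Proposition \ref{fejer0}, but now we must carry along the inexactness errors $\varepsilon_k$ and $\nu_k$ and absorb them using the summability hypothesis $(\ref{delta})$ together with Lemma \ref{p}. First I would fix $\hat{x}\in E$ (nonempty by ${\bf(C_3)}$) and, exactly as in $(\ref{norma2})$, expand $\|x^k-\hat x\|^2=\|x^k-x^{k+1}\|^2+\|x^{k+1}-\hat x\|^2+2\langle x^k-x^{k+1},x^{k+1}-\hat x\rangle$. From $(\ref{diferencial})$ there is $g_k\in\hat\partial_{\varepsilon_k}\Psi_k(x^{k+1})$ and $\nu_k\in\mathcal N_{\Omega_k}(x^{k+1})$ with $\alpha_k(x^k-x^{k+1})=g_k+\nu_k$, so the cross term becomes $\tfrac{2}{\alpha_k}\langle g_k+\nu_k,\ x^{k+1}-\hat x\rangle$.

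Next I would estimate the two pieces of this inner product separately. For the normal-cone term, $\langle\nu_k,\hat x-x^{k+1}\rangle\le 0$ since $\hat x\in\Omega_k$, hence $\langle\nu_k,x^{k+1}-\hat x\rangle\ge -\|\nu_k\|\,\|x^{k+1}-\hat x\|$ — wait, more usefully, $\langle \nu_k, x^{k+1}-\hat x\rangle \ge 0$, which already has the favorable sign; but to control it uniformly I would instead bound $|\langle \nu_k, x^{k+1}-\hat x\rangle|\le \|\nu_k\|\,\|x^{k+1}-\hat x\|$. For the $\varepsilon_k$-subgradient term I would use the characterization recorded after Remark \ref{frechet2}: $g_k\in\hat\partial_{\varepsilon_k}\Psi_k(x^{k+1})$ means $g_k\in\hat\partial\bigl(\Psi_k+\varepsilon_k\|\cdot-x^{k+1}\|\bigr)(x^{k+1})$, i.e. for the convex-like inequality coming from $\mathbb R^m_+$-quasiconvexity of $F$ (applied as in Proposition \ref{propfejer2}, using $F(\hat x)\preceq F(x^k)$ so $\langle F(t\hat x+(1-t)x^{k+1}),z_k\rangle\le\langle F(x^{k+1}),z_k\rangle$) one gets $\langle g_k,\hat x-x^{k+1}\rangle\le \varepsilon_k\|\hat x-x^{k+1}\|$. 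Therefore $\langle g_k,x^{k+1}-\hat x\rangle\ge -\varepsilon_k\|x^{k+1}-\hat x\|$.

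Combining, and writing $\gamma_k:=\|x^{k+1}-\hat x\|$, I obtain
\begin{equation*}
\|x^{k+1}-\hat x\|^2 \le \|x^k-\hat x\|^2 - \|x^{k+1}-x^k\|^2 + \frac{2}{\alpha_k}\bigl(\varepsilon_k+\|\nu_k\|\bigr)\gamma_k \le \|x^k-\hat x\|^2 + 4\delta_k\,\gamma_k,
\end{equation*}
using $\max\{\varepsilon_k/\alpha_k,\|\nu_k\|/\alpha_k\}=\delta_k$. Now apply Remark \ref{bregman} in the form $\gamma_k\le \gamma_k^2+\tfrac14$ to get $\gamma_k^2\le \gamma_{k-1}^2\!\cdot$(no, indices) — concretely $\|x^{k+1}-\hat x\|^2\le \|x^k-\hat x\|^2+4\delta_k\|x^{k+1}-\hat x\|^2+\delta_k$, and for $k$ large enough that $4\delta_k<1/2$ (possible since $\delta_k\to0$ by $(\ref{delta})$) rearrange to $\|x^{k+1}-\hat x\|^2\le (1+8\delta_k)\|x^k-\hat x\|^2+2\delta_k$. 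This is exactly the recursion $w_{k+1}\le(1+p_k)w_k+q_k$ of Lemma \ref{p} with $w_k=\|x^k-\hat x\|^2$, $p_k=8\delta_k$, $q_k=2\delta_k$, both summable. Hence $\{\|\hat x-x^k\|^2\}$ converges; in particular it is bounded, so $\{x^k\}$ is bounded, which is the claim.

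The main obstacle I anticipate is the second step: correctly turning the $\varepsilon_k$-Fréchet-subgradient inclusion into the inequality $\langle g_k,\hat x-x^{k+1}\rangle\le\varepsilon_k\|\hat x-x^{k+1}\|$ along the whole segment. One must combine the quasiconvexity inequality $(\ref{Fz2})$-style bound for $\langle F(\cdot),z_k\rangle$ on the segment $[x^{k+1},\hat x]\subset\Omega_k$ (using that $\Omega_k$ is convex and $F(\hat x)\preceq F(x^k)$) with the $\varepsilon_k$-subgradient defining inequality evaluated at points $y=t\hat x+(1-t)x^{k+1}$, then divide by $t$ and let $t\to0^+$, exactly as in the proof of Proposition \ref{propfejer2} but keeping the extra $(\varepsilon_k+\eta)\|y-x^{k+1}\|$ term. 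Everything else is the bookkeeping of absorbing $\delta_k$ via Remark \ref{bregman} and invoking Lemma \ref{p}.
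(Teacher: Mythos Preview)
Your plan is correct and follows essentially the same architecture as the paper: expand $\|x^k-\hat x\|^2$, extract the cross term via the inclusion $(\ref{diferencial})$, bound it by $\frac{2}{\alpha_k}(\varepsilon_k+\|\nu_k\|)\|\hat x-x^{k+1}\|\le 4\delta_k\|\hat x-x^{k+1}\|$, absorb the linear term through Remark~\ref{bregman}, rearrange, and feed the resulting recursion into Lemma~\ref{p}. Your constant bookkeeping ($1/(1-4\delta_k)\le 1+8\delta_k$ once $4\delta_k<1/2$) is in fact cleaner than the paper's.

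The one genuine methodological difference is how you obtain $\langle g_k,\hat x-x^{k+1}\rangle\le\varepsilon_k\|\hat x-x^{k+1}\|$. The paper exploits assumption ${\bf(C_4)}$ through Proposition~\ref{fdif}: since $\Psi_k$ is $C^1$, any $g_k\in\hat\partial_{\varepsilon_k}\Psi_k(x^{k+1})\subset\partial_{\varepsilon_k}\Psi_k(x^{k+1})=\nabla\Psi_k(x^{k+1})+\varepsilon_kB$ decomposes as $g_k=\nabla\Psi_k(x^{k+1})+\varepsilon_k h_k$ with $\|h_k\|\le1$, and then the gradient part is nonpositive by Proposition~\ref{caracterizacaodif}. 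You instead use the Treiman-type identity $\hat\partial_{\varepsilon_k}\Psi_k(x^{k+1})=\hat\partial\bigl(\Psi_k+\varepsilon_k\|\cdot-x^{k+1}\|\bigr)(x^{k+1})$ and rerun the Proposition~\ref{propfejer2} segment argument directly. Your route is more elementary and does not actually consume ${\bf(C_4)}$ for this step (so it would survive in a nonsmooth setting), whereas the paper's route is shorter because the $C^1$ structure collapses the $\varepsilon$-subdifferential to a ball around the gradient. Both reach the same inequality and the remainder of the argument coincides.
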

\begin{proof}
From $(\ref{diferencial})$, there exist $g_k \in \hat{\partial}_{\varepsilon_k}\Psi_k (x^{k+1})$ and  $\nu_k \in \mathcal{N}_{\Omega_k}(x^{k+1})$ such that
\begin{center}
$0 = g_k + \alpha_k\left(x^{k+1} - x^k \right) + \nu_k$. 
\end{center}
It follows that for any $x \in \mathbb{R}^n$, we obtain
\begin{equation*}
  \langle - g_k , x - x^{k+1}\rangle + \alpha_k\langle x^k - x^{k+1},  x - x^{k+1}\rangle = \langle\nu_k,  x - x^{k+1}\rangle \leq \Vert \nu_k\Vert \Vert  x - x^{k+1}\Vert
\label{diferenca2}
\end{equation*}
Therefore
\begin{equation}
\langle x^k - x^{k+1},  x - x^{k+1} \rangle \leq \dfrac{1}{\alpha_k}\left( \langle g_k , x - x^{k+1}\rangle + \Vert \nu_k\Vert \Vert  x - x^{k+1}\Vert\right) .
\label{a}
\end{equation}
Note that $\forall \ x \in \mathbb{R}^n$:
\begin{eqnarray}
\left\|x - x^{k+1}\right\|^2 &-&\left\| x - x^k \right\|^2  \leq \ \ 2\left\langle x^k - x^{k+1}, x - x^{k+1} \right\rangle .
 \label{norma5}                        
\end{eqnarray}
From $(\ref{a})$ and $(\ref{norma5})$, we obtain
\begin{equation}
\left\|x - x^{k+1}\right\|^2 -\left\| x - x^k \right\|^2  \leq \dfrac{2}{\alpha_k}\left( \langle g_k , x - x^{k+1}\rangle + \Vert \nu_k\Vert \Vert  x - x^{k+1}\Vert\right) .
\label{b}
 \end{equation}
On the other hand, let $\Psi_k (x) = \left\langle F(x), z_k\right\rangle$, where $F: \mathbb{R}^n \rightarrow \mathbb{R}^m$ is continuously differentiable vector function, then $ \Psi_k : \mathbb{R}^n \rightarrow \mathbb{R}$ is continuously differentiable with gradient denoted by $\nabla \Psi_k $.  From Proposition $\ref{fdif}$, we have
\begin{equation}
\partial_{\varepsilon_{k}}\Psi_k (x) = \nabla \Psi_k (x) + \varepsilon_{k} B,
\end{equation}
where $B$ is the closed unit ball in $\R^n$ centered at zero. Futhermore, $\hat{\partial}_{\varepsilon_k} \Psi_k(x) \subset \partial_{\varepsilon_k} \Psi_k(x)$, (see (2.12) in Jofré et al. \cite{Jofre}).  As $g_k \in \hat{\partial}_{\epsilon_k}\Psi_k(x^{k+1})$, we have that $g_k \in \partial_{\epsilon_k}\Psi_k(x^{k+1})$, then $$g_k = \nabla\Psi_k(x^{k+1}) + \varepsilon_k h_k,$$ with $\Vert h_k \Vert \leq 1 $. Now take $\hat{x}\in \textnormal{E},$ then
\begin{eqnarray}
\langle g_k,\hat{x} - x^{k+1} \rangle & =& \left\langle \nabla \Psi_k(x^{k+1}) + \varepsilon_k h_k\ , \ \hat{x} - x^{k+1}\right\rangle \nonumber \\
&=&\sum_{i=1}^m \left\langle \nabla F_i(x^{k+1})\ ,\ \hat{x} -  x^{k+1}\right\rangle (z_k)_i  +\varepsilon_k\left\langle h_k\ ,\ \hat{x} -x^{k+1} \right\rangle \nonumber\\
\label{fe1}
\end{eqnarray}
From Proposition $\ref{caracterizacaodif}$, we conclude that $(\ref{fe1})$ becomes
\begin{eqnarray}
\langle g_k,\hat{x} - x^{k+1} \rangle \leq \varepsilon_k\left\langle h_k\ ,\ \hat{x} -x^{k+1} \right\rangle \leq \varepsilon_k \Vert\hat{x} - x^{k+1} \Vert \nonumber\\
\label{f2}
\end{eqnarray}
From Remark $\ref{bregman}$ with $x = \hat{x}$ and $z = x^{k+1}$, follows
\begin{eqnarray}  
\Vert \hat{x} - x^{k+1}\Vert \leq \left( \Vert \hat{x} - x^{k+1}\Vert ^2 + \frac{1}{4}\right).
\label{c}  
\end{eqnarray}
Consider $x = \hat{x}$ in $(\ref{b})$, using $(\ref{f2})$, $(\ref{c})$ and the condition (\ref{delta}) we obtain 
\begin{eqnarray*}
\left\|\hat{x} - x^{k+1}\right\|^2 -\left\| \hat{x} - x^k \right\|^2 & \leq & \dfrac{2}{\alpha_k}\left( \varepsilon_k + \Vert \nu_k\Vert\right)\Vert \hat{x} - x^{k+1}\Vert  \\
&\leq &  4 \delta_k \left\|\hat{x} - x^{k+1}\right\|^2 + \delta_k.
\end{eqnarray*}
Thus
\begin{eqnarray}
\left\|\hat{x} - x^{k+1}\right\|^2 \leq \left(\frac{1}{1-4\delta_k}\right)\left\|\hat{x} - x^{k}\right\|^2+ \frac{\delta_k}{1-4\delta_k}.
\label{d}
\end{eqnarray}
The condition (\ref{delta}) guarantees that
\begin{center}
$\delta_k < \dfrac{1}{4}, \ \ \forall k > k_0, $\\
\end{center}
where $k_0$ is a natural number sufficiently large, and so,
\begin{equation*}
1 \leq \frac{1}{1-4\delta_k} \leq 1+2\delta_k <2,\ \ \ \textnormal{for}\ \  k \geq k_0, 
\end{equation*}
combining with $(\ref{d})$, results in
\begin{equation}
\left\|\hat{x} - x^{k+1}\right\|^2 \leq \left(1 + 2\delta_k \right)\left\|\hat{x} - x^{k}\right\|^2 + 2\delta_k .
\label{e2}
\end{equation}
Since $\displaystyle \sum_{i=1}^{\infty} \delta_k < \infty,$ applying Lemma $\ref{p}$ in the inequality $(\ref{e2})$, we obtain the convergence of $\{ \|\hat{x} - x^k\|^2 \}$, for each $\hat{x} \in E,$ which implies that there exists $M \in \mathbb{R}_+$, such that $\left\|\hat{x} - x^k \right\|  \leq M, \ \ \forall \ k \in \mathbb{N}.
$
Now, since that $\Vert x ^k\Vert \leq \Vert x ^k -  \hat{x} \Vert + \Vert \hat{x} \Vert,$ we conclude that $\{x^k\}$ is bounded, and so, we guarantee that the set of accumulation points of this sequence is nonempty. \end{proof}

\subsubsection{Convergence of the {\bf ISPP} algorithm}

\begin{proposicao}{(\bf Convergence to some point of E)}\\
If the assumptions ${\bf(C_{1.2})},$ ${\bf(C_2)}$, ${\bf(C_3)}$ and ${\bf(C_4)}$ are satisfied, then the sequence  $\left\{x^k\right\}$  generated by the  {\bf ISPP} algorithm converges to some point of the set $E$.
\label{acumulacao100}
\end{proposicao}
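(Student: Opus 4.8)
The plan is to mimic the proof of Proposition~\ref{acumulacao10}, replacing the appeal to Fej\'er convergence (Lemma~\ref{fejerlim1}) by the quasi--Fej\'er estimate already established in Proposition~\ref{fejer3}. First I would recall that assumption $\bf (C_3)$ guarantees $E\neq\emptyset$, and that, since $\bf (C_4)$ makes $F$ real valued and continuous on $\R^n$, all the hypotheses of Proposition~\ref{fejer3} are met. Hence $\{x^k\}$ is bounded and, for every $\hat x\in E$, the scalar sequence $\{\|\hat x-x^k\|^2\}$ is convergent. Boundedness yields a subsequence $\{x^{k_j}\}$ with $x^{k_j}\to\widehat x$ for some $\widehat x\in\R^n$, so the set of accumulation points is nonempty; it remains to identify one of them inside $E$ and then to upgrade subsequential to full convergence.

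Next I would prove $\widehat x\in E$. Fix $z\in\R^m_+\setminus\{0\}$. By $\bf (C_4)$ the map $x\mapsto\langle F(x),z\rangle$ is continuous, in particular lower semicontinuous, so $\langle F(\widehat x),z\rangle\le\liminf_{j\to\infty}\langle F(x^{k_j}),z\rangle$. On the other hand, from the existence step (Proposition~\ref{iteracao2}) we have $x^{k+1}\in\Omega_k$, i.e. $F(x^{k+1})\preceq F(x^k)$, whence $\langle F(x^{k+1}),z\rangle\le\langle F(x^k),z\rangle$ for all $k$; since $\bf (C_{1.2})$ makes $\langle F(\cdot),z\rangle$ bounded below, the sequence $\{\langle F(x^k),z\rangle\}$ is nonincreasing and bounded below, hence convergent, with limit $\inf_{k\in\N}\langle F(x^k),z\rangle$ (and the subsequence along $k_j$ shares this limit). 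Combining the two facts gives $\langle F(\widehat x),z\rangle\le\inf_{k\in\N}\langle F(x^k),z\rangle\le\langle F(x^k),z\rangle$ for every $k$, that is $\langle F(x^k)-F(\widehat x),z\rangle\ge 0$ for all $k\in\N$ and all $z\in\R^m_+\setminus\{0\}$. Therefore $F(x^k)-F(\widehat x)\in\R^m_+$, i.e. $F(\widehat x)\preceq F(x^k)$ for all $k\in\N$, so $\widehat x\in E$.

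Finally I would close the argument using Proposition~\ref{fejer3} once more: since $\widehat x\in E$, the sequence $\{\|\widehat x-x^k\|^2\}$ converges; along the subsequence $\{k_j\}$ it converges to $0$, so its limit is $0$, and consequently $\|\widehat x-x^k\|\to 0$, i.e. $x^k\to\widehat x\in E$, which is the claim.

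I do not expect a genuine obstacle here: the analytic heavy lifting (the quasi--Fej\'er inequality $(\ref{e2})$ and its consequence via Lemma~\ref{p}) is already packaged in Proposition~\ref{fejer3}, and the monotonicity of $\{\langle F(x^k),z\rangle\}$ is elementary. The only point needing a little care is that one no longer has true Fej\'er convergence, so rather than quoting Lemma~\ref{fejerlim1}(ii) verbatim I would invoke the elementary fact that a sequence all of whose distances to a fixed accumulation point converge must converge to that point.
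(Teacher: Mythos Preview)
Your proposal is correct and follows essentially the same route as the paper's own proof: extract a convergent subsequence via the boundedness in Proposition~\ref{fejer3}, use the monotonicity of $\langle F(x^k),z\rangle$ (from $x^{k+1}\in\Omega_k$) together with ${\bf(C_{1.2})}$ and continuity of $F$ to place the limit $\widehat{x}$ in $E$, and then invoke the convergence of $\{\|\widehat{x}-x^k\|^2\}$ from Proposition~\ref{fejer3} to upgrade to full convergence. The only cosmetic difference is that the paper uses continuity to write equality $\langle F(\widehat{x}),z\rangle=\lim_j\langle F(x^{k_j}),z\rangle$ where you use the (sufficient) $\liminf$ inequality.
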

\begin{proof}
As $\left\{x^k\right\}$ is bounded, then there exists a subsequence $\left\{x^{k_j}\right\}$ such that $\lim  \limits_{j\rightarrow +\infty}x^{k_j} = \widehat{x}$. Since $F$ is continuous in $\mathbb{R}^n$, then the function $\left\langle F(.), z\right\rangle$ is also continuous in $\mathbb{R}^n$ for all $z \in \mathbb{R}^m$, in particular, for all $z \in \mathbb{R}^m_+ \backslash \left\{0\right\}$, and
$\left\langle F(\widehat x),z\right\rangle = \lim \limits_{j\rightarrow +\infty}\left\langle F(x^{k_j}) , z\right\rangle $.  On the other hand, we have that $F(x^{k+1}) \preceq F(x^{k})$, and so, $\left\langle F(x^{k+1}) , z\right\rangle \leq \left\langle F(x^{k}) , z\right\rangle $ for all  $z \in \mathbb{R}^m_+ \backslash \left\{0\right\}$.  Furthermore the function $\left\langle F(.), z\right\rangle$  is 
bounded below, for each  $z \in \mathbb{R}^m_+\backslash \left\{0\right\}$, then the sequence $\left\{\left\langle F(x^k),z\right\rangle\right\}$ is  nonincreasing and bounded below, thus convergent. So, 
{\small
\begin{center}
$\left\langle F(\widehat {x}),z\right\rangle = \lim \limits_{j\rightarrow +\infty}\left\langle F(x^{k_j}) , z\right\rangle = \lim \limits_{j\rightarrow +\infty}\left\langle F(x^{k}) , z\right\rangle =  inf_{k\in \mathbb{N}}\left\{\left\langle F(x^k),z\right\rangle\right\}\leq \left\langle F(x^k),z\right\rangle$.
\end{center}
} 
It follows that $F(x^k) - F(\widehat{x}) \in \mathbb{R}^m_+$, i.e, $F(\widehat{x})\preceq F(x^k), \forall \ k  \in \mathbb{N}$. Therefore $\widehat{x}\in E$.  Now, from Proposition $\ref{fejer3}$, we have that the sequence $\{ \left\|\widehat{x} - x^k\right\|\}$ is convergent, and since $\lim  \limits_{k\rightarrow +\infty}\left\|x^{k_j} - \hat{x}\right\| = 0$, we conclude that $\lim  \limits_{k\rightarrow +\infty}\left\|x^{k} - \widehat{x}\right\| = 0$, i.e, $\lim \limits_{k\rightarrow +\infty}x^{k} = \widehat{x}.$
\end{proof}
 \begin{teorema}
 \label{conv4}
Suppose that the assumptions ${\bf(C_{1.2})},$ ${\bf(C_2)}$, ${\bf(C_3)}$ and ${\bf(C_4)}$ are satisfied.  If $0 < \alpha_k < \widetilde{\alpha}$, then the sequence $\lbrace x^k\rbrace$  generated by the {\bf ISPP} algorithm , $(\ref{inicio2})$, $(\ref{diferencial})$ and $(\ref{delta}),$ converges to a Pareto critical point of the problem $(\ref{pom3})$.
\end{teorema}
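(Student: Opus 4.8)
The plan is to mimic the structure of the proof of Theorem~\ref{conv3} for the exact case, now working from the inexact optimality condition $(\ref{diferencial})$ rather than $(\ref{subdiferencial3})$. By Proposition~\ref{acumulacao100} we already know that $\{x^k\}$ converges to some $\widehat{x}\in E$, so it only remains to show that $\widehat{x}$ is a Pareto critical point. First I would extract, from $(\ref{diferencial})$ and the computation already carried out in the proof of Proposition~\ref{fejer3}, the representation
\begin{equation*}
0 = \sum_{i=1}^m \nabla F_i(x^{k+1})(z_k)_i + \varepsilon_k h_k + \alpha_k\left(x^{k+1} - x^k\right) + \nu_k,
\end{equation*}
with $\Vert h_k\Vert\le 1$, $\nu_k\in\mathcal{N}_{\Omega_k}(x^{k+1})$, and where $\varepsilon_k/\alpha_k\le\delta_k$, $\Vert\nu_k\Vert/\alpha_k\le\delta_k$, and $\delta_k\to 0$ since $\sum\delta_k<\infty$.

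Next, taking an arbitrary $\bar{x}\in E$, so that $\bar{x}\in\Omega_k$ for every $k$, I would pair this identity with $\bar{x}-x^{k+1}$ and use $\langle\nu_k,\bar{x}-x^{k+1}\rangle\le 0$ to obtain
\begin{equation*}
\left\langle \sum_{i=1}^m \nabla F_i(x^{k+1})(z_k)_i\ ,\ \bar{x} - x^{k+1}\right\rangle + \varepsilon_k\langle h_k,\bar{x}-x^{k+1}\rangle + \alpha_k\left\langle x^{k+1} - x^k\ ,\ \bar{x} - x^{k+1}\right\rangle \ge 0.
\end{equation*}
Now I pass to the limit along a subsequence $z_{k_j}\to\bar{z}\in\mathbb{R}^m_+\backslash\{0\}$. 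The boundedness of $\{x^k\}$ (Proposition~\ref{fejer3}) together with $\Vert x^{k+1}-x^k\Vert\to 0$ — which follows from the convergence $x^k\to\widehat{x}$ — kills the third term; for the second term I would divide through by $\alpha_k$ first, writing the inequality as $\frac{1}{\alpha_k}\langle\sum_i\nabla F_i(x^{k+1})(z_k)_i,\bar{x}-x^{k+1}\rangle + \frac{\varepsilon_k}{\alpha_k}\langle h_k,\bar{x}-x^{k+1}\rangle + \langle x^{k+1}-x^k,\bar{x}-x^{k+1}\rangle\ge 0$, so that the middle term is bounded by $\delta_k M\to 0$; alternatively, since $\alpha_k<\widetilde{\alpha}$, keep the original form and note $\varepsilon_k\le\alpha_k\delta_k<\widetilde{\alpha}\delta_k\to 0$ while $\langle h_k,\bar x-x^{k+1}\rangle$ stays bounded, so $\varepsilon_k\langle h_k,\bar x-x^{k+1}\rangle\to 0$ directly. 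Either way, using the continuous differentiability of $F$ one arrives at
\begin{equation*}
\sum_{i=1}^m \bar{z}_i\left\langle \nabla F_i(\widehat{x})\ ,\ \bar{x} - \widehat{x}\right\rangle \ge 0,\qquad\forall\,\bar{x}\in E.
\end{equation*}

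From here the argument is identical to the last part of the proof of Theorem~\ref{conv3}: Proposition~\ref{caracterizacaodif} gives $\langle\nabla F_i(\widehat{x}),\bar{x}-\widehat{x}\rangle\le 0$ for every $i$, so the inequality forces $\sum_i\bar{z}_i\langle\nabla F_i(\widehat{x}),\bar{x}-\widehat{x}\rangle = 0$, hence $\langle\nabla F_i(\widehat{x}),\bar{x}-\widehat{x}\rangle = 0$ for all $i$ in $J=\{i:\bar{z}_i>0\}$ and all $\bar{x}\in E$. If $\widehat{x}$ were not Pareto critical there would be a descent direction $v$ with $\langle\nabla F_i(\widehat{x}),v\rangle<0$ for all $i$, giving $F(\widehat{x}+\lambda v)\prec F(\widehat{x})$ for small $\lambda>0$ and hence $\widehat{x}+\lambda v\in E$; taking $\bar{x}=\widehat{x}+\lambda v$ in the previous equality yields $\langle\nabla F_i(\widehat{x}),v\rangle = 0$ for $i\in J$, a contradiction. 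Therefore $\widehat{x}$ is a Pareto critical point of $(\ref{pom3})$. The only genuinely new point relative to Theorem~\ref{conv3} is the handling of the $\varepsilon_k h_k$ term, and the summability condition $(\ref{delta})$ was designed precisely so that this error contribution vanishes in the limit; so I expect the main (though modest) obstacle to be bookkeeping the two error terms $\varepsilon_k$ and $\nu_k$ against the bound $\alpha_k<\widetilde\alpha$ and the boundedness of $\{x^k\}$, rather than anything structural.
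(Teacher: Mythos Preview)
Your proposal is correct and follows essentially the same route as the paper's proof: both start from Proposition~\ref{acumulacao100}, unpack $(\ref{diferencial})$ via $g_k=\nabla\Psi_k(x^{k+1})+\varepsilon_k h_k$, pair the resulting identity with $\bar{x}-x^{k+1}$ for $\bar{x}\in E$, use $\langle\nu_k,\bar{x}-x^{k+1}\rangle\le 0$, pass to the limit along a subsequence $z_{k_j}\to\bar{z}$, and then finish by the descent-direction contradiction from Theorem~\ref{conv3}. The only notable difference is that you obtain $\|x^{k+1}-x^k\|\to 0$ directly from the full convergence $x^k\to\widehat{x}$ (triangle inequality), whereas the paper re-derives it through a separate Fej\'er-type estimate $\|x^{k+1}-x^k\|^2\le\|\bar{x}-x^k\|^2-\|\bar{x}-x^{k+1}\|^2+4M\delta_k$; your shortcut is legitimate since Proposition~\ref{acumulacao100} has already done that work. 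One small remark: your first option of dividing the inequality by $\alpha_k$ would leave a factor $1/\alpha_k$ on the gradient term that you cannot control in the limit without a lower bound on $\alpha_k$, so it is your ``alternatively'' branch (using $\varepsilon_k<\widetilde{\alpha}\delta_k\to 0$ in the undivided inequality) that actually carries the argument---and that is exactly what the paper does.
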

\begin{proof}
 From Proposition $\ref{acumulacao100}$ there exists $\widehat{x}\in E$ such that  $\lim \limits_{j\rightarrow +\infty}x^{k}= \widehat{x}$.  Furthermore, as the sequence $\left\{z^k\right\}$ is bounded, then there exists $ \left\{z^{k_j}\right\}_{j \in \mathbb{N}}$ such that $\lim \limits_{j\rightarrow +\infty}z^{k_j}= \bar{z}$, with $\bar{z} \in \mathbb{R}^m_+\backslash \left\{0\right\}$.  From $(\ref{diferencial})$ there exists $g_{k_j} \in \hat{\partial}_{\varepsilon_{k_j}}\Psi_{k_j} (x^{{k_j}+1})$, with $g_{k_j} = \nabla\Psi_{k_j}(x^{{k_j}+1}) + \varepsilon_{k_j} h_{k_j}$ with $\Vert h_{k_j} \Vert \leq 1 $, and  $\nu_{k_j} \in \mathcal{N}_{\Omega_{k_j}}(x^{{k_j}+1})$, such that:
 
\begin{equation}
0 = \sum_{i=1}^m \nabla F_i(x^{{k_j}+1})(z_{k_j})_i + \varepsilon_{k_j} h_{k_j} + \alpha_{k_j} \left(x^{{k_j}+1} - x^{k_j} \right) + \nu_{k_j}  
\label{otimalidade2}
\end{equation}
Since $\nu_{k_j} \in \mathcal{N}_{\Omega_{k_j}}(x^{{k_j}+1})$ then, 
\begin{equation}
\left\langle \nu_{k_j} \ ,\ x - x^{{k_j}+1}\right\rangle \leq\  0,\  \forall \ x \in \Omega_{k_j}
\label{cone6}
\end{equation}
Take $\bar{x} \in E$.  By definition of $E$, $\bar{x} \in \Omega_k$, for all $ k \in \mathbb{N}$, so $\bar{x} \in  \Omega_{k_j}$.  Combining $(\ref{cone6})$ with $x = \bar{x}$ and $(\ref{otimalidade2})$, we have
{\footnotesize
\begin{eqnarray}
 0 &\leq &  \left\langle  \sum_{i=1}^m \nabla F_i(x^{{k_j}+1})(z_{k_j})_i\ ,\ \bar{x} - x^{{k_j}+1}\right\rangle + \varepsilon_{k_j}\left\langle h_{k_j}\ ,\ \bar{x} - x^{{k_j}+1}\right\rangle + 
+\alpha_{k_j}\left\langle x^{{k_j}+1} - x^{k_j}\ ,\ \bar{x} - x^{{k_j}+1}\right\rangle \nonumber \\
&\leq & \left\langle\sum_{i=1}^m \nabla F_i(x^{{k_j}+1})(z_{k_j})_i\ ,\ \bar{x} - x^{{k_j}+1}\right\rangle + \varepsilon_{k_j}M + \tilde{\alpha}\left\langle x^{{k_j}+1} - x^{k_j}\ ,\ \bar{x} - x^{{k_j}+1}\right\rangle \nonumber \\
\label{aaa}
\end{eqnarray}
}Observe that, $\forall \ x \in \mathbb{R}^n$:

\begin{eqnarray}
\left\|x^{k+1} - x^k \right\|^2 &=& \left\| x - x^k \right\|^2 - \left\| x - x^{k+1}\right\|^2 + 2\left\langle x^k - x^{k+1} , x - x^{k+1} \right\rangle 
\label{consecutiva}
 \end{eqnarray}
Now, from $(\ref{a})$ with $x = \bar{x} \in E$, and $(\ref{f2})$, we obtain
\begin{eqnarray*}
\left\langle x^k - x^{k+1} ,\bar{x} - x^{k+1} \right\rangle\leq \left\| \bar{x} - x^{k+1}\right\|\left( \dfrac{\varepsilon_k}{\alpha_k} + \dfrac{\Vert \nu_k \Vert}{\alpha_k}\right) \leq 2M\delta_k
\end{eqnarray*}
Thus, from $(\ref{consecutiva})$, with $x = \bar{x}$, we have
\begin{eqnarray}
0 \leq \left\|x^{k+1} - x^k \right\|^2 \leq \left\| \bar{x} - x^k \right\|^2 - \left\| \bar{x} - x^{k+1}\right\|^2 +4M\delta_k
\label{consecutiva2}
\end{eqnarray}
Since that the sequence $\left\{ \left\|\bar{x} - x^k\right\|\right\}$ is convergent and $\displaystyle\sum_{i=1}^{\infty}\delta_k < \infty$, from $(\ref{consecutiva2})$ we conclude that $\displaystyle \lim_{k \to +\infty} \left\|x^{k +1} - x^k \right\| = 0$.  Furthermore, as
\begin{eqnarray}
0 \leq \left\|x^{{k_j}+1} - \bar{x} \right\| \leq \Vert x^{{k_j}+1} - x^{k_j} \Vert + \Vert x^{k_j} - \bar{x}\Vert,
\label{zero}
 \end{eqnarray}
we obtain that the sequence $\{ \left\|\bar{x} - x^{{k_j}+1}\right\|\}$ is bounded.\\
Thus returning to $(\ref{aaa})$, since $\lim \limits_{k\rightarrow+\infty}\varepsilon_k = 0 $, $\lim \limits_{j\rightarrow +\infty}x^{k}= \widehat{x}$ and $\lim \limits_{j\rightarrow +\infty}z^{k_j}= \bar{z}$, taking $j \rightarrow + \infty$, we obtain
\begin{equation}
\sum_{i=1}^m \bar{z}_i\left\langle \nabla F_i(\widehat{x}) \ ,\ \bar{x} -  \widehat{x} \right\rangle\geq 0.
\label{cc}
\end{equation}
Therefore, analogously to the proof of Theorem $\ref{teoparetocri}$, starting in $(\ref{somai0})$, we conclude that $\widehat{x}$ is a Pareto critical point to the problem  $(\ref{pom3})$.
\end{proof}
\section{Finite convergence to a Pareto optimal point}
\noindent

In this section we prove the finite convergence of a particular inexact scalarization proximal point algorithm for proper lower semicontinuous convex functions, which we call Convex Inexact Scalarization Proximal Point algorithm, {\bf CISPP} algorithm.

Let $F:\mathbb{R}^n\longrightarrow \mathbb{R}^m\cup \{+ \infty \}^m$ be a proper lower semicontinuous convex function and consider $ z \in \mathbb{R}^m_+\backslash \left\{0\right\}$ with $\left\|z\right\| = 1$ and the sequences of the proximal parameters $\left\{\alpha_k\right\}$ such that $0<\alpha_k<\bar{\alpha}.$
\begin{description}
\item [{\bf CISPP algorithm}] 

\item [\bf Initialization:] Choose  an arbitrary starting point
\begin{eqnarray}
x^0\in\mathbb{R}^n 
\label{inicio2c}
\end{eqnarray}
\item [Main Steps:] Given $x^k,$ and find $x^{k+1}$ satisfying
 \begin{equation}
e^k\in \partial \left( \langle F(.), z\rangle+ \frac{\alpha_k}{2}\|. - x^k\|^2 \right)(x^{k+1})
  \label{recursao0f2c}
 \end{equation}
\begin{equation}
\label{deltac}
\displaystyle \sum_{k=1}^{\infty} ||e^k||<+\infty,
\end{equation}
where $\partial$ is the classical subdifferential for convex functions.
\item [Stop criterion:] If $x^{k+1}=x^{k} $ or $x^{k+1}$ is a Pareto optimal point, then stop.                                           Otherwise to do $k \leftarrow k + 1 $ and return to Main Steps.
\end{description}
\begin{teorema}
Let $F:\mathbb{R}^n\longrightarrow \mathbb{R}^m\cup \{+ \infty \}^m$ be a proper lower semicontinuous convex function, $0\preceq F$ and assume that $\{x^k\}$ is a sequence generated by the {\bf CISPP} algorithm, $(\ref{inicio2c})$, $(\ref{recursao0f2c})$ and $(\ref{deltac})$. Consider also that the set of Pareto optimal points of $(\ref{pom3}),$ denoted by $Min(F),$ is nonempty and assume that $Min(F)$ is $W_{F(\bar x)}$-weak sharp minimum for the problem $(\ref{pom3})$ with constant $\tau>0$ for some $\bar x\in Min(F).$ Then the sequence $\{x^k\}$ converges, in a finite number of iterations, to a Pareto optimal point.
\end{teorema}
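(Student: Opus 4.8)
The plan is to mirror the finite‑convergence argument of Bento et al.\ (proof of Theorem 4.3 in \cite{Bento}), carrying along the summable errors $\{e^k\}$ as in the standard theory of inexact proximal methods. Set $\phi:=\langle F(\cdot),z\rangle$; since each $F_i$ is proper lower semicontinuous convex and $z\in\mathbb R^m_+$, the function $\phi$ is proper lower semicontinuous convex, and $\phi\ge 0$ by the hypothesis $0\preceq F$. Applying the Moreau--Rockafellar sum rule to $(\ref{recursao0f2c})$ (legitimate because the quadratic term is everywhere finite and continuous) yields
$$\xi^{k+1}:=e^k-\alpha_k\bigl(x^{k+1}-x^k\bigr)\in\partial\phi(x^{k+1}),\qquad k\in\mathbb N .$$

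\emph{Step 1 (boundedness, Fej\'er‑type convergence, $\|x^{k+1}-x^k\|\to 0$).} Fix $w\in W:=W_{F(\bar x)}$, so that $\phi(w)=\phi(\bar x)$. Combining the subgradient inequality $\phi(w)\ge\phi(x^{k+1})+\langle\xi^{k+1},w-x^{k+1}\rangle$ with the identity $2\langle x^{k+1}-x^k,w-x^{k+1}\rangle=\|x^k-w\|^2-\|x^{k+1}-w\|^2-\|x^{k+1}-x^k\|^2$, using $0<\alpha_k<\bar\alpha$ and absorbing the error terms via Remark \ref{bregman} exactly as in the proof of Proposition \ref{fejer3}, one reaches an inequality of the form $\|x^{k+1}-w\|^2\le(1+c_k)\|x^k-w\|^2+c_k$ with $\sum_k c_k<+\infty$ (here $\sum_k\|e^k\|<+\infty$ is used, and one also needs the term coming from $\phi(x^{k+1})-\phi(\bar x)$ to have the right sign, obtained from the almost‑monotonicity $\phi(x^{k+1})\le\phi(x^k)+\|e^k\|\,\|x^{k+1}-x^k\|$ of $\{\phi(x^k)\}$ together with the weak‑sharp hypothesis). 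Lemma \ref{p} then gives convergence of $\{\|x^k-w\|\}$ for every $w\in W$, hence boundedness of $\{x^k\}$, and, arguing as in Propositions \ref{decrescente00} and \ref{acumulacao100}, $\|x^{k+1}-x^k\|\to 0$.

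\emph{Step 2 (finite termination).} Suppose, for contradiction, that $x^{k+1}\notin W$ for every $k$. For such iterates the weak‑sharp inclusion $F(x^{k+1})-F(\bar x)\notin B\bigl(0,\tau\,d(x^{k+1},W)\bigr)-\mathbb R^m_+$ is equivalent to $\textnormal{dist}\bigl(F(x^{k+1})-F(\bar x),-\mathbb R^m_+\bigr)\ge\tau\,d(x^{k+1},W)$; combining this with the subgradient inequalities for the $F_i$ at $x^{k+1}$ evaluated at $P_W(x^{k+1})$ (recall $F(P_W(x^{k+1}))=F(\bar x)$) and with $\|z\|=1$ yields a constant $\tau'>0$ such that $\|\xi^{k+1}\|\ge\tau'$. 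On the other hand $\|\xi^{k+1}\|\le\|e^k\|+\bar\alpha\,\|x^{k+1}-x^k\|\to 0$ by Step 1, a contradiction. Hence $x^{K+1}\in W$ for some finite $K$, i.e.\ $F(x^{K+1})=F(\bar x)$, so $x^{K+1}$ is a Pareto optimal point and the stop criterion halts the algorithm at iteration $K+1$.

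\emph{Main obstacle.} The genuinely delicate step — the one that really uses $0\preceq F$, the convexity of $F$ and the normalization $\|z\|=1$ — is the conversion, needed in Steps 1 and 2, of the \emph{vectorial} weak‑sharp estimate $\textnormal{dist}(F(x)-F(\bar x),-\mathbb R^m_+)\ge\tau\,d(x,W_{F(\bar x)})$ into (i) the correct sign of the $\phi(x^{k+1})-\phi(\bar x)$ contribution and (ii) a uniform lower bound on the minimal‑norm element of $\partial\langle F(\cdot),z\rangle(x^{k+1})$ at points outside $W_{F(\bar x)}$. Everything else — the Fej\'er bookkeeping and the handling of the summable perturbations $\{e^k\}$ — is routine and parallels Proposition \ref{fejer3} and Theorem \ref{conv4} together with the exact convex case of Section 4.
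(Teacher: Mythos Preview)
Your route departs from the paper's in two ways. First, your Step~1 re-derives boundedness, quasi-Fej\'er convergence and $\|x^{k+1}-x^k\|\to0$ from scratch; the paper simply invokes Rockafellar~\cite{Rocka} for the classical inexact convex proximal method applied to $g:=\langle F(\cdot),z\rangle$, which already gives $x^k\to\hat x\in U:=\arg\min g$ (hence $\|x^{k+1}-x^k\|\to0$ and $d(x^k,W_{F(\bar x)})\to0$). Second, and more substantively, the finite-termination mechanism is different: the paper compares \emph{function values}, deriving from the proximal inclusion and the norm identity~(\ref{norma2}) the upper bound
\[
g(x^{k+1})-g(x^*)\le\tfrac{\alpha_k}{2}\bigl(\|x^{k+1}-x^*\|^2+2\|x^{k+1}-x^k\|\,\|x^{k+1}-x^*\|\bigr)+\|e^k\|\,\|x^{k+1}-x^*\|,
\]
then takes $x^*$ to be the projection of $x^{k+1}$ onto $W_{F(\bar x)}$, combines this with the weak-sharp lower bound on $g(x^{k+1})-g(x^*)$, divides by $\|x^{k+1}-x^*\|=d(x^{k+1},W_{F(\bar x)})>0$, and lets $k\to\infty$ to reach $2\tau/\bar\alpha\le0$. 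Your Step~2 instead argues via the subgradient norm $\|\xi^{k+1}\|$.

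The gap you flag is real, and it bites harder in your formulation than in the paper's. Both arguments must convert the vectorial estimate $\textnormal{dist}(F(x)-F(\bar x),-\mathbb R^m_+)\ge\tau\,d(x,W_{F(\bar x)})$ into a scalar one for $g$; the paper needs only the function-value form $g(x^{k+1})-g(x^*)\ge\tau\,d(x^{k+1},W_{F(\bar x)})$, whereas you need the stronger conclusion $\|\xi^{k+1}\|\ge\tau'$. Your proposed derivation (``combining with the subgradient inequalities for the $F_i$\ldots and with $\|z\|=1$'') does not work as stated: the componentwise inequalities give information about each $\xi_i\in\partial F_i(x^{k+1})$, but the index~$i$ for which $(F_i(x^{k+1})-F_i(\bar x))^+$ is large may carry weight $z_i=0$, so no uniform lower bound on $\|\sum_i z_i\xi_i\|$ follows. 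If a scalar weak-sharp inequality for $g$ were already in hand, then the subgradient inequality at $x^{k+1}$ evaluated at $P_W(x^{k+1})$ \emph{would} yield $\|\xi^{k+1}\|\ge\tau'$ --- but at that point the paper's direct function-value comparison is shorter and avoids the detour through subgradients altogether.
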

\begin{proof} 
Denote by $g(x)=\langle F(.), z\rangle$ and $$U=\textnormal{arg min}\{g(x): x\in \mathbb{R}^n\}.$$
As $Min(F)$ is nonempty and it is  $W_{F(\bar{x})}$-weak sharp minimum then $Min(F)=WMin(F),$ where $WMin (F)$ denotes the weak Pareto solution of the problem (\ref{pom3}). From Theorem 4.2 of \cite{Bento} it follows that $U$ is nonempty.\\
On the other hand, it is well known that the above {\bf CISPP} algorithm is well defined and converges to some point of $U,$ see 
Rockafellar \cite{Rocka}. We will prove that this convergence is obtained to Pareto optimal point in a finite number of iterations.\\
Suppose, by contradiction, that the sequence $\{x^k\}$ is infinite and take $x^*\in U.$ 
From the iteration $(\ref{recursao0f2c})$ we have that
$$
g(x^{k+1})-g(x^*)\leq \frac{\alpha_k}{2} \left( ||x^k-x^*||^2-||x^{k+1}-x^k||^2\right)+||e^k|||x^{k+1}-x^*||
$$
From (\ref{norma2}) the above inequality implies
$$
g(x^{k+1})-g(x^*)\leq \frac{\alpha_k}{2} \left( ||x^{k+1}-x^*||^2 + 2||x^{k+1}-x^k||||x^{k+1}-x^*||\right)+||e^k|||x^{k+1}-x^*||
$$
Taking $x^*\in U$ such that $||x^{k+1}-x^*||=d(x^{k+1}, W_{F(\bar x)})$ and using the condition of $0<\alpha_k<\bar{\alpha},$ we obtain
$$
\frac{2\tau}{\bar{\alpha}}\leq d(x^{k+1},W_{F(\bar x)})+2||x^{k+1}-x^k|| +\frac{2}{\bar{\alpha}}||e^k||
$$
Letting $k$ goes to infinite in the above inequality we obtain that
$$
\frac{2\tau}{\bar{\alpha}}\leq 0,
$$
which is a contradiction. Thus the {\bf CISPP} algorithm converges to a some point $\hat{x}\in U$ in a finite number of steps.\\
Finally, we will prove that the point of convergence of $\{x^k\},$ denoted by $\hat{x}\in U,$ is a Pareto optimal point of the problem (\ref{pom3}). In fact, as $g$ is weak scalar of the vector function $F,$ then from Proposition \ref{inclusao}, we have $\hat{x}\in WMin(F)$ anf from the equality $Min(F)=WMin(F),$ we obtain that $\hat{x}\in U,$ is a Pareto optimal point of the problem.
\end{proof}

\section{A Numerical Result}
\noindent

In this subsection we give a simple numerical example showing the functionally of the proposed method. For that we use a Intel Core i5 computer 2.30 GHz, 3GB of RAM, Windows 7 as operational system with SP1 64 bits and we implement our code using MATLAB software 7.10 (R2010a).

\begin{exem}
Consider the following multiobjective minimization problem
$$
\min \left\{(F_1(x_1,x_2),F_2(x_1,x_2)): (x_1,x_2)\in \R^2 \right\}
$$
where $F_1(x_1,x_2)=-e^{-x_1^2-x_2^2}+1$ and $F_2(x_1,x_2)=(x_1-1)^2+(x_2-2)^2.$ This problem satisfies the assumptions $\bf (C_{1.2}),$ $\bf (C_2)$ and $\bf (C_4).$ We can easily verify that the points $\bar x=(0,0)$ and $\hat{x}=(1,2)$ are Pareto solutions of the problem.\\
We take $x^0=(-1,3)$ as an initial point and given $x^k\in \R^2,$ the main step of the {\bf SPP} $\mbox{algorithm}$ is to find a critical point ( local minimum, local maximum or a saddle point) of the following problem
%
$$
\label{e}
\left\{\begin{array}{l}
		\min g(x_1,x_2)=(-e^{-x_1^2-x_2^2}+1)z_1^k + \left((x_1-1)^2+(x_2-2)^2 \right)z_2^k+\frac{\alpha_k}{2}\left((x_1-x_1^k)^2+(x_2-x_2^k)^2\right)
\\
		s.to:\\
		\hspace{0.5cm}  x_1^2+x_2^2\leq (x_{1}^k)^2+(x_{2}^k)^2\\
		\hspace{0.5cm}  (x_1-1)^2+(x_2-2)^2\leq (x_1^k-1)^2+(x_2^k-2)^2
	\end{array}\right.
	$$
In this example we consider $z_k=\left(z_1^k,z_2^k\right)=\left(\frac{1}{\sqrt{2}},\frac{1}{\sqrt{2}}\right)$ and $\alpha_k=1,$ for each $k.$
We take $z^0=(2,3)$ as the initial point to solve all the subproblems using the MATLAB function fmincon (with interior point algorithm) and we consider the stop criterion $||x^{k+1}-x^k||<0.0001$ to finish the algorithm. The numerical results are given in the following table:
{\scriptsize	$$ \begin{tabular}{|c|c|c|c|c|c|c|c|}\hline      
    $k$& $  N[x^{k}] $& $x^k=(x^{k}_{1}, x^{k}_{2}) $ &$ ||x^{k}-x^{k-1} || $& $\sum F_i(x^k)z_i^k$& $ F_1(x_1^k,x_2^k)$ & $F_2(x_1^k,x_2^k)$ \\ \hline     
 1  & 10  & (0.17128, 2.41010)& 1.31144& 1.30959& 0.99709 &0.85496      \\
 2  &  10 & (0.65440, 2.16217) &0.54302  & 0.80586 & 0.99392 & 0.14574 	 \\
 3  & 9 &   (0.85337, 2.05877 ) &0.22423 &  0.71983 & 0.99303     &	 0.02496  \\
 4 &  7  &  (0.93534, 2.01588 ) & 0.09251 & 0.70518 & 0.99284     &	 0.00443    \\
 5 & 7  &  (0.96912, 1.99814) & 0.03816 & 0.70268 & 0.99279      &	 0.00096     \\
 6 & 7 &   (0.98305, 1.99080)  & 0.01574 &  0.70226 &  0.99277   &	 0.00037    \\
 7 &  7 &  (0.98879, 1.98776)  & 0.00649 &  0.70219 & 0.99277     &	 0.00028     \\
 8 &  7  &  (0.99115,1.98651) &  0.00268 & 0.70217 & 0.99276     &   0.00026   \\
 9 & 7 &   (0.99213, 1.98599)  & 0.00110 &  0.70217& 0.99276      &  0.00026   \\
 10 & 7 &    (0.99253, 1.98578) & 0.00046 & 0.70217 & 0.99276      &  0.00026  \\ 
 11 & 7 &    (0.99270, 1.98569) &  0.00019 & 0.70217 & 0.99276      &  0.00026  \\ 

 12 & 7 &    (0.99277,1.98565) &  0.00008 & 0.70217 & 0.99276      &  0.00026   \\ \hline
  \end{tabular} 
    $$}
The above table show that we need $k=12$ iterations to solve the problem, $N[x^{k}]$ denotes the inner iterations of each subproblem to obtain the point $x^{k},$ for example to obtain the point $x^3=(0.85337, 2.05877 )$ we need $N[x^3]=9$ inner iterations. Observe also that in each iteration we obtain $F(x^k)\succeq F(x^{k+1})$ and the function $\langle F(x^k),z^k\rangle$ is non increasing.
\end{exem}

\section{Conclusion}
\noindent

This paper introduce an exact linear scalarization proximal point algorithm, denoted by {\bf SPP} algorithm, to solve arbitrary extended multiobjective quasiconvex minimization problems. In the differentiable case it is presented an inexact version of the proposed algorithm and for the (not necessary differentiable) convex case, we present an inexact algorithm and we introduced some conditions to obtain finite convergence to a Pareto optimal point.

To reduce considerably the computational cost in each iteration of the {\bf SPP} algorithm it is need to consider the unconstrained iteration
\begin{equation}
\label{subdiferencialintF}
0 \in \hat{\partial}\left( \left\langle F(.), z_k\right\rangle  + \dfrac{\alpha_k}{2} \Vert\ .\  - x^k \Vert ^2 \right) (x^{k+1}) 
\end{equation}
which is more practical than (\ref{subdiferencial3}). One natural condition to obtain (\ref{subdiferencialintF}) is that $x ^{k +1} \in (\Omega_k)^0$ (interior of $\Omega_k$). So we believe that a variant of the {\bf SPP} algorithm may be an interior variable metric proximal point method.

A future research may be the extension of the proposed algorithm for more general constrained vector minimization problems using proximal distances. Another future research may be to obtain a finite convergence of the {\bf SPP} algorithm for the quasiconvex case.
%
{\footnotesize 

}


\end{document}